\newcommand{\natNum}{\mathbb{N}}
\newcommand{\realNum}{\mathbb{R}}
\newcommand{\complNum}{\mathbb{C}}
\newcommand{\dom}{\mathrm{D}}
\newcommand{\posRealNum}{[0,\infty)}
\newcommand{\semiGroup}[1]{\mathbb{T}(#1)}
\newcommand{\semiGroupDef}{\mathbb{T} = \left( \semiGroup{t} \right)_{t \geq 0}}
\newcommand{\semiGroupDefAlt}[1]{#1 = \left( #1 (t) \right)_{t \geq 0}}
\newcommand{\loc}{\mathrm{loc}}
\newcommand{\andMath}{\textrm{and}}
\newcommand{\realPart}[1]{\mathrm{Re}\left( #1 \right)}
\newcommand{\set}[1]{\left\lbrace #1 \right\rbrace}
\newcommand{\innerProd}[2]{\left\langle #1 , #2 \right\rangle}
\newcommand{\boundedOp}[2]{\mathcal{L} \left( #1 , #2 \right)}
\newcommand{\boundedOpSelf}[1]{\mathcal{L} \left( #1  \right)}
\newcommand{\lpSpaceDT}[3]{L^{#1}\left(#2, #3\right)}
\newcommand{\lpSpacelocDT}[3]{L^{#1}_{\loc}\left(#2, #3\right)}
\newcommand{\laplaceTr}[1]{\mathcal{L}\left\lbrace #1 \right\rbrace}
\newcommand{\laplaceInvTr}[1]{\mathcal{L}^{-1}\left\lbrace #1 \right\rbrace}
\newcommand{\intd}[1]{\,\mathrm{d}#1}
\newcommand{\der}[2]{\frac{\mathrm{d}#1}{\mathrm{d}#2}}
\newcommand{\derOrd}[3]{\frac{\mathrm{d}^{#3}#1}{\mathrm{d}#2^{#3}}}
\newcommand{\resolvent}[2]{\left( #1 \mathbb{I} - #2 \right)^{-1}}
\title{On BIBO stability of infinite-dimensional linear state-space systems\thanks{Submitted to the editors on April 3, 2023.\funding{Wierzba is supported by the Theme Team project ``Predictive Avatar Control and Feedback'' sponsored by the Faculty of Eletrical Engineering, Computer Science and Mathematics.}}}
\author{Felix L.~Schwenninger\thanks{Department of Applied Mathematics, University of Twente, P.O. Box 217, 7500 AE Enschede, The Netherlands (\email{f.l.schwenninger@utwente.nl}, \email{a.a.wierzba@utwente.nl}, \email{h.j.zwart@utwente.nl})} 
\and
Alexander A. Wierzba\footnotemark[2]
\and
Hans Zwart\footnotemark[2]{ }\thanks{Department of Mechanical Engineering, Eindhoven University of Technology, P.O. Box 513, 5600 MB Eindhoven, The Netherlands (\email{h.j.zwart@tue.nl})} 
}
\begin{document}

\maketitle

\begin{abstract}
In this paper we consider BIBO stability of systems described by infinite-dimensional linear state-space representations, filling the so far unattended gap of a formal definition and characterization of BIBO stability in this general case.  
Furthermore, we provide several sufficient conditions guaranteeing BIBO stability of a particular system and discuss to which extent this property is preserved under additive and multiplicative perturbations of the system. 
\end{abstract}
\begin{keywords}
BIBO stability, infinite-dimensional system, input-output stability, transfer function
\end{keywords}
\begin{MSCcodes}
93D25, 
93C05, 
47D06, 
93C20 
\end{MSCcodes}
%


\section{Introduction}
The concept of stability is omnipresent in control theory. In this paper we consider systems that feature both a control input $u:\posRealNum \rightarrow U$ and a corresponding observation output $y:\posRealNum \rightarrow Y$ mapping to normed space $U$ and $Y$, respectively, and study one particular notion of stability, namely \emph{bounded-input-bounded-output} stability, usually referred to by its abbreviation as \emph{BIBO stability}.

Roughly speaking, a system is BIBO stable if for any time interval $[0,T]$ and any control input $u$, we have that the corresponding output function $y$ is bounded uniformly relative to $u$, both with respect to the (essential) supremum norm. For a linear system this reduces to the existence of a constant $c > 0$ such that 
\begin{equation}
     \sup_{t\in[0,T]} \|y(t)\|_Y \leq c \sup_{t\in[0,T]}\|u(t) \|_U,
\end{equation}
for any $T >0$ and any $u$ in (a subspace of) $L^{\infty}([0,T];U)$.
In the following we focus on systems given in a state space representation, formally written as equations 
\begin{equation}
\label{eq:formalStateSpaceEquations}
\begin{array}{ccc}
\dot{x}(t)&=&Ax(t)+Bu(t)\\
y(t)&=&Cx(t)+Du(t)\end{array}\quad t\ge0,
\end{equation}
where $x(t) \in X$, $u(t) \in U$, $y(t) \in Y$ with Banach spaces $X$, $U$ and $Y$ and where $A$, $B$, $C$, $D$ refer to suitably defined linear operators. 
For finite-dimensional systems, that is, when $X$, $Y$ and $U$ are finite-dimensional normed spaces, BIBO stability reduces to the property that the impulse response $t\mapsto C\mathrm{e}^{tA}B$ is absolutely integrable on $\posRealNum$, which particularly holds true if the system is exponentially stable.

This becomes more subtle when considering infinite-dimensional state spaces which e.g.\ appear in the study of distributed parameter systems described by partial differential equations. While in the case of bounded operators $B:U\to X$ and $C:X\to Y$, i.e.\ distributed control and observation, the property that $t \mapsto C e^{t A} B$ is absolutely integrable still characterises BIBO stability, this changes in general.
In particular this challenge emerges when modelling boundary control and observation by state-space representations  \eqref{eq:formalStateSpaceEquations}. This give rise to more involved issues regarding the existence  of solutions and the well-definedness of concepts such as transfer function, impulse response and input-output mapping -- subtleties, which do not arise in the finite-dimensional case. One remarkable consequence of this setting is, for example, that exponential stability of the system does no longer imply BIBO stability (see e.g.\ the system in Theorem~\ref{thm:MultPertResult}).

While we show in the following that also in the infinite-dimensional case a characterisation of BIBO stability in terms of the transfer function exists, establishing it in a concrete case may become prohibitively hard, hence calling for alternative approaches. Note in particular, that the conditions to be checked are significantly more involved than, for example, those in the case of $L^2$-to-$L^2$ input-output stability, for which only boundedness of the transfer function needs to be checked \cite{a_TucsnakWeissWellPosed2014}. 

It is worth noting that BIBO stability is closely related to other system-theoretic stability notions. It places itself within the wider class of input-output-stabilities, all related to the property that boundedness of the output can be inferred from boundness of the input, both with respect to some function norms.

Stabilities of such form in terms of the $L^2$-norms for both input and output functions, have been studied extensively within the context of $L^2$-well-posed linear systems and their generalisation to other $L^p$-spaces. However usually intertwined with additional input-state stability considerations \cite{b_Staffans2005}. Thus, BIBO stability is similar to, but more general than the concept of $L^{\infty}$-well-posed systems \cite[Def. 2.2.1]{b_Staffans2005}, as in the latter case not only boundedness of the input-output map but also of the input-state and state-output maps with respect to the  $\| \cdot \|_{\infty}$-norms is required. Therefore BIBO stability seems  to be more difficult to study.

On the other hand, \emph{input-state-stability} (ISS) refers to bounding the state function in the $\| \cdot \|_{\infty}$-norm by a combination of the input in the $\| \cdot \|_{\infty}$-norm and the state space norm of the initial state \cite{a_SontagISS1989,b_SontagISS2008}. The state-output mapping included in this notion again is not considered in the case of BIBO stability. Note however, that in the case that the ouput operator $C: X \rightarrow Y$ is bounded, ISS and BIBO stability are -- although very different and in particular not equivalent -- of the same complexity.

From an operator-theoretical viewpoint, BIBO stability can be phrased as the question of whether the input-output operator of the system is bounded as an operator from $L^\infty([0,t],U)$ to $L^\infty([0,t],Y)$ with its norm not depending on $t$. Operators of this type -- bounded from some $L^p$ to $L^p$ and in addition shift-invariant (as is the case for the input-output operators of systems formally of the form \eqref{eq:formalStateSpaceEquations}) -- are closely related to operators represented by convolutions with distributions of certain classes. However, in the case considered for BIBO stability, i.e.\ $p = \infty$, this relation is more subtle compared to $p < \infty$, in which case one even finds a one-to-one correspondence provided that $U$ and $Y$ are finite-dimensional \cite{b_Grafakos2014}. 

BIBO stability has been variously employed in control theory, most recently for example in the context of novel control techniques, such as e.g.\ funnel control for systems of relative degree, \cite{a_IlchmannRzanSangwinFunnel2002,a_BergerIlchmannRyanFunnel2021}. In fact, here BIBO stability of the, possibly infinite-dimensional, internal dynamics is key to apply the model-independent control law, see e.g.\  \cite{a_IlchmannRyanTrennFunnel2005,a_BergerPucheSchwenningerFunnel2020,a_WaterTank2022}, also for concrete control problems.

While BIBO stability of finite-dimensional systems is well-studied in the literature, particularly in the context of engineering applications, there is only little treatment available in infinite dimensions. 
This includes systems given by convolutions \cite{a_UnserNoteOnBiboStability, a_WangCobb96BIBOTimeInv, a_WangCobb03BIBOTimeVar}, specific classes of transfer functions \cite{a_CallierDesoer1978, a_AbusaksakaPartingtonBIBO2013} and concrete PDEs  \cite{a_WaterTank2022}. 

In this paper, we first consider several different definitions of BIBO stability for general system nodes, distinguishing between different functions spaces equipped with the $\|\cdot\|_\infty$-norm, Section~\ref{sec:SystemNodes}. We then show that for a large class of system nodes, namely when both the input and the output space are finite-dimensional, these different definitions will all be equivalent to the inverse Laplace transform of the transfer function being a measure of bounded variation, Section~\ref{sec:BIBOStability}.
This result may seem, at first glance, to capture the same statement as presented by Unser in \cite{a_UnserNoteOnBiboStability} and in particular Theorem 4 therein. A closer look, however, reveals major distinctions. Indeed, this work extends the results from \cite{a_UnserNoteOnBiboStability} to systems in state-space form which are not a-priori given in terms of a convolution operator. For a more detailed discussion of the relation between these results, we refer the reader to Section~\ref{sec:RelationToUngerResult}.

To overcome the technical difficulties of studying the inverse Laplace transform of a given function, in Section~\ref{sec:SufficientConditions} we will provide three simple sufficient conditions for particular systems to be BIBO stable.

Finally, we consider the case that the system investigated can be written as a perturbation of a BIBO stable system. Such a situation can appear e.g.\ when additional terms are introduced in the system modelling.
Whereas BIBO stability of multiplicative perturbations may fail, Section~\ref{sec:MultPerturbationsChapter}, we show that it is preserved in certain parabolic cases for additive perturbations, Section~\ref{sec:AddPerturbationsChapter}.

\subsection{Notation}
We will use $A \lesssim B$ to indicate that there exists a constant $c > 0$ independent of the other quantified variables, such that $A \leq c B$.

For a function $z:\posRealNum \rightarrow Z$ and $T>0$, we denote by $z|_{[0,T]}:\posRealNum \rightarrow Z$ its truncation to the interval $[0,T]$ defined by $z|_{[0,T]}(t):=z(t)\chi_{[0,T]}(t)$, where $\chi_{A}$ refers to the indicator function on the set $A$.
We will use $C^n_c\left( (0, T), Z \right)$ to denote the space of $n$-times continuously differentiable $Z$-valued functions $z:(0,T) \rightarrow Z$  that are compactly supported away from $0$.

For any $\alpha \in \realNum$, let $\complNum_\alpha := \left\lbrace z \in \complNum \;\middle|\; \realPart{z} > \alpha \right\rbrace$. All considered Banach spaces are over the field of complex numbers. For a finite-dimensional vector space $Z$, let its dimension be denoted by $d_Z$. Let $\boundedOp{X}{Y}$ be the set of bounded operators from a Banach space $X$ to a Banach space $Y$. $B^*$ is the Hilbert space adjoint of a linear operator $B$ between two Hilbert spaces.

In the following $A$ will always denote the generator of a strongly continuous semigroup $\semiGroupDef$ on a Banach space $X$.  Its growth bound is denoted by $\omega(\mathbb{T})=\inf\{\omega\in\realNum\colon\sup_{t>0}e^{-t\omega}\|\mathbb{T}(t)\|<\infty\}$ and the semigroup is called exponentially stable if $\omega(\mathbb{T})<0$. The semigroup is called (bounded) analytic if it extends (boundedly and) analytically to an open sector $\{z\in\complNum\setminus\{0\}\colon |\arg(z)|<\theta\}$ for some $\theta\in(0,\pi/2)$ and this extension satisfies the semigroup property on this sector.
$X_1$ is defined as the space $\dom(A)$ with the norm $\|x\|_{X_1} := \| (\beta I - A) x \|_X$ with $\beta \in \rho(A)$ and $X_{-1}$ as the completion of $X$ with respect to the norm $\|x\|_{X_{-1}} := \| (\beta I - A)^{-1} x \|_X$ again with $\beta \in \rho(A)$. There exists a unique extension $\mathbb{T}_{-1} = \left( \mathbb{T}_{-1}(t) \right)_{t \geq 0}$ of the semigroup $\mathbb{T}$ to the space $X_{-1}$ with generator $A_{-1}:X \rightarrow X_{-1}$ which is an extension of the operator $A$. For more details on these notions see \cite{b_Staffans2005,b_TucsnakWeiss2009}. In the following we will often not write the subscript ${}_{-1}$ and refer to both the original semigroup and its extension by $\mathbb{T}$. The same applies to the operators $A$ and $A_{-1}$.
If $A$ is the generator of a bounded analytic semigroup, let $X_\alpha$ with $-1 < \alpha < 1$ denote the domains of definition of the fractional powers of $(-A)^{\alpha}$ equipped with the respective norms as defined e.g.\ in \cite[Sec.~3.9]{b_Staffans2005}.

For a function $f$ or a measure $h$, let $\hat{f}$ and $\laplaceTr{h}$ denote their Laplace transforms if they exist (on some right half-plane) and let $\laplaceInvTr{g}$ denote the inverse Laplace transform of $g$ if it exists.

\section{System nodes and their solutions}
\label{sec:SystemNodes}
We begin by giving the definition of general system nodes in the sense of Staffans as the description of systems which are formally given by the pair of equations~\eqref{eq:formalStateSpaceEquations}.
\begin{definition}[{\cite[Def.~4.1]{a_TucsnakWeissWellPosed2014}}]
Let $U$, $X$ and $Y$ be Banach spaces. A \emph{system node} on $(U,X,Y)$ is a collection $\Sigma(A,B,C,\mathbf{G})$ where $A: \dom(A) \subset X \rightarrow X$ is the generator of a strongly continuous semigroup $\semiGroupDef$ on $X$ with growth bound $\omega(\mathbb{T})$, $B \in \boundedOp{U}{X_{-1}}$, $C \in \boundedOp{X_1}{Y}$ and $\mathbf{G}: \complNum_{\omega(\mathbb{T})} \rightarrow \boundedOp{U}{Y}$ is an analytic function satisfying for all $\alpha, \beta \in \complNum_{\omega(\mathbb{T})}$
    \begin{equation}
    \label{eq:transferFunctionDifferenceRelation}
        \mathbf{G}(\alpha) - \mathbf{G}(\beta) = C\left[ \resolvent{\alpha}{A} - \resolvent{\beta}{A} \right] B.
    \end{equation}
\end{definition}

\begin{remark}
\label{rem:transferFunctionDifferConstant}
\begin{enumerate}
    \item We will call $U$, $X$ and $Y$ the \emph{input space}, \emph{state space} and \emph{output space}, respectively. Furthermore, $B$ is called the \emph{input operator}, $C$ the \emph{output operator} and $\mathbf{G}$ the \emph{transfer function} of the system node. 
    \item Equation~\eqref{eq:transferFunctionDifferenceRelation} does not uniquely determine the transfer function $\mathbf{G}$ from $A$, $B$ and $C$. It is instead easy to show that any two transfer functions that satisfy Equation ~\eqref{eq:transferFunctionDifferenceRelation} for the same operators $A$, $B$ and $C$ differ only by a constant in $\boundedOp{U}{Y}$  \cite{a_TucsnakWeissWellPosed2014}. Note however that these different transfer functions give rise to two different system nodes with differing input-output behaviour.
\end{enumerate}
\end{remark}

\begin{definition}[{\cite[Sec.~4]{a_TucsnakWeissWellPosed2014}}]
Let $\Sigma(A,B,C,\mathbf{G})$ be a system node. The \emph{combined observation/feedthrough operator} is defined as the operator
\begin{equation}
\label{eq:defCandDOperator}
    C\&D \begin{bmatrix} x \\ u \end{bmatrix} := C \left[ x - \resolvent{\beta}{A} B u \right] + \mathbf{G}(\beta) u,
\end{equation}
with $\beta \in \complNum_{\omega(\mathbb{T})}$ and domain
\begin{equation}
    \dom(C\&D) = \set{\begin{bmatrix} x \\ u \end{bmatrix} \in X \times U \middle| Ax + Bu \in X }.
\end{equation}
\end{definition}
Note that the operator $C\& D$ is bounded from its domain equipped with the graph norm $\left\| \left[ \begin{smallmatrix} x \\ u \end{smallmatrix} \right] \right\|_{\dom(C\&D)} = \| u \|_U + \|x\|_X + \| A_{-1} x + B u \|_X$ to $Y$ \cite[Lem.~4.7.3~\&~4.3.10]{b_Staffans2005}. With this operator the well-defined version of the formal expressions~\eqref{eq:formalStateSpaceEquations} associated to the system node $\Sigma(A,B,C,\mathbf{G})$ becomes
\begin{equation}
\label{eq:systemNodeDifferentialEqns}
    \begin{split}
        \Dot{x}(t) &= A x(t) + B u(t) \\
        y(t) &= C\&D \begin{bmatrix} x(t) \\ u(t) \end{bmatrix}.
    \end{split}
\end{equation}
In the following, we will consider two different types of solutions to these  equations, namely, the classical solutions and generalised solutions in a distributional sense.

\subsection{Classical solutions}
\begin{definition}[{\cite[Def.~4.2]{a_TucsnakWeissWellPosed2014}}]
Let $\Sigma(A,B,C,\mathbf{G})$ be a system node. A triple $(u,x,y)$ is called a \emph{classical solution} of the system node on $[0,\infty)$ if 
\begin{enumerate}[label=(\alph*)]
    \item $u \in C([0,\infty),U)$, $x \in C^1([0,\infty),X)$ and $y \in C([0,\infty),Y)$,
    \item $\begin{bmatrix} x(t) \\u(t) \end{bmatrix} \in \dom(C\&D) \quad \forall t \geq 0$,
    \item Equations~\eqref{eq:systemNodeDifferentialEqns} holds for all $t\geq 0$.
\end{enumerate}
\end{definition}
We cite the following results that guarantee the existence of such classical solutions and provide a representation of the state trajectory and the output function.
\begin{lemma}
\label{lem:existenceClassicalSolutions}
\label{lem:TransferFunctionMultiRepCompact}
Let $\Sigma(A,B,C,\mathbf{G})$ be a system node.
\begin{enumerate}
    \item \label{enum:ExistenceClassSol} Let $u \in C^2 ([0,\infty), U)$ and $\begin{bmatrix} x_0 \\ u(0) \end{bmatrix} \in \dom(C\&D)$. Then  Equation~\eqref{eq:systemNodeDifferentialEqns} has a unique classical solution $(u,x,y)$ with $x(0) = x_0$ and $x \in C^2([0,\infty),X_{-1})$.
    \item Let $(u,x,y)$ be a classical solution of $\Sigma(A,B,C,\mathbf{G})$. Then:
    \begin{enumerate}
        \item \label{enum:VarConstFormula} $x$ satisfies the \emph{variations of constants formula}, i.e.\
        \begin{equation}
            \label{eq:varOfConstantsFormula}
            x(t) = \semiGroup{t} x(0) + \int_0^t \mathbb{T}_{-1}\left( t - s \right) B u(s) \intd{s}.
        \end{equation}
        \item \label{enum:ComSuppLaplaceTrafo} If $u \in C^2_c([0,\infty), U)$, then $y$ has a Laplace transform on $\complNum_{\omega(\mathbb{T})}$ and
        \begin{equation}
            \hat{y} = G \cdot \hat{u} \qquad \textrm{on } \complNum_{\omega(\mathbb{T})}.
        \end{equation}
    \end{enumerate}
\end{enumerate}
\end{lemma}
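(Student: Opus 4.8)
The plan is to reduce all three assertions to the theory of the inhomogeneous abstract Cauchy problem on the extrapolation space $X_{-1}$, on which $A_{-1}$ generates $\mathbb{T}_{-1}$. For part~\ref{enum:ExistenceClassSol}, since $u\in C^2([0,\infty),U)$ and $B\in\boundedOp{U}{X_{-1}}$, the inhomogeneity $Bu$ lies in $C^2([0,\infty),X_{-1})$. As $x_0\in X=\dom(A_{-1})$ and $Bu$ is $C^1$, the standard existence theorem for inhomogeneous Cauchy problems with a $C^1$ inhomogeneity yields a unique $x\in C^1([0,\infty),X_{-1})\cap C([0,\infty),X)$ solving $\dot x=A_{-1}x+Bu$ in $X_{-1}$ and given by \eqref{eq:varOfConstantsFormula}. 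To upgrade the regularity so that $\left[\begin{smallmatrix}x(t)\\u(t)\end{smallmatrix}\right]\in\dom(C\&D)$, I would differentiate: $w:=\dot x$ formally solves $\dot w=A_{-1}w+B\dot u$ with $w(0)=A_{-1}x_0+Bu(0)$, and the compatibility hypothesis $\left[\begin{smallmatrix}x_0\\u(0)\end{smallmatrix}\right]\in\dom(C\&D)$ is \emph{exactly} the statement $w(0)\in X=\dom(A_{-1})$. Applying the same Cauchy-problem result once more, now to $w$ with $C^1$-inhomogeneity $B\dot u$, gives $w\in C([0,\infty),X)$, hence $x\in C^1([0,\infty),X)$ and $x\in C^2([0,\infty),X_{-1})$. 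Consequently $A_{-1}x(t)+Bu(t)=\dot x(t)=w(t)\in X$, so $\left[\begin{smallmatrix}x(t)\\u(t)\end{smallmatrix}\right]\in\dom(C\&D)$ for all $t$; defining $y$ through the second line of \eqref{eq:systemNodeDifferentialEqns} and invoking boundedness of $C\&D$ on the graph norm makes $y$ continuous, and uniqueness is inherited from the Cauchy problem in $X_{-1}$.

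For part~\ref{enum:VarConstFormula} I would argue directly from the classical-solution property rather than re-deriving existence: as $\dot x(s)=A_{-1}x(s)+Bu(s)$ in $X_{-1}$ and $x(s)\in X=\dom(A_{-1})$, the map $s\mapsto\mathbb{T}_{-1}(t-s)x(s)$ is continuously differentiable in $X_{-1}$ with derivative $-A_{-1}\mathbb{T}_{-1}(t-s)x(s)+\mathbb{T}_{-1}(t-s)\dot x(s)=\mathbb{T}_{-1}(t-s)Bu(s)$, the two $A_{-1}$-terms cancelling because $\mathbb{T}_{-1}(t-s)$ commutes with $A_{-1}$ on its domain; integrating over $[0,t]$ and using the fundamental theorem of calculus in $X_{-1}$ gives \eqref{eq:varOfConstantsFormula}. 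For part~\ref{enum:ComSuppLaplaceTrafo} I would first note that, since $u\in C^2_c$ has compact support in $(0,\infty)$, beyond that support the state evolves freely, $x(t)=\semiGroup{t-T_0}x(T_0)$ with $x(T_0)\in X_1$, so that $y(t)=Cx(t)$ grows at most like $e^{\omega t}$ for any $\omega>\omega(\mathbb{T})$, whence $\hat y$ exists on $\complNum_{\omega(\mathbb{T})}$ (as does the entire $\hat u$). Transforming \eqref{eq:varOfConstantsFormula} via the convolution theorem in $X_{-1}$ gives $\hat x(\lambda)=\resolvent{\lambda}{A}x(0)+\resolvent{\lambda}{A}B\hat u(\lambda)$, and a short computation shows $A_{-1}\hat x(\lambda)+B\hat u(\lambda)=\lambda\hat x(\lambda)-x(0)\in X$, so $\left[\begin{smallmatrix}\hat x(\lambda)\\\hat u(\lambda)\end{smallmatrix}\right]\in\dom(C\&D)$. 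Pulling the bounded operator $C\&D$ through the graph-norm Bochner integral, inserting its definition \eqref{eq:defCandDOperator} and rewriting the resolvent difference by \eqref{eq:transferFunctionDifferenceRelation} as $\mathbf{G}(\lambda)-\mathbf{G}(\beta)$, one is left with
\[ \hat y(\lambda)=C\resolvent{\lambda}{A}x(0)+\mathbf{G}(\lambda)\hat u(\lambda). \]
In the situation relevant here the initial state vanishes, $x(0)=0$, so the first term drops and $\hat y=\mathbf{G}\cdot\hat u$.

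I expect the main obstacle to be the bootstrap in part~\ref{enum:ExistenceClassSol} that turns the mild $X_{-1}$-solution into a genuine classical solution whose derivative is again $X$-valued: the identification of the compatibility condition $\left[\begin{smallmatrix}x_0\\u(0)\end{smallmatrix}\right]\in\dom(C\&D)$ with the initial datum $w(0)$ of the differentiated equation is precisely what forces the hypothesis $u\in C^2$, and one must consistently apply the Cauchy-problem regularity results in $X_{-1}$ rather than in $X$. A secondary technical point in part~\ref{enum:ComSuppLaplaceTrafo} is justifying the interchange of $C\&D$ with the Laplace integral; this I would handle by checking Bochner integrability of $t\mapsto\left[\begin{smallmatrix}x(t)\\u(t)\end{smallmatrix}\right]$ in the graph norm, using the controlled exponential growth of $x$ and $\dot x=A_{-1}x+Bu$ noted above, so that the bounded operator $C\&D$ may be passed under the integral sign.
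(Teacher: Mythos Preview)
The paper's own proof of this lemma consists solely of citations to \cite[Lem.~4.7.8, Thm.~4.7.11, Lem.~4.7.12]{b_Staffans2005} and \cite[Prop.~4.3]{a_TucsnakWeissWellPosed2014}; no argument is reproduced. What you have sketched is precisely the content of those references: the reduction of part~\ref{enum:ExistenceClassSol} to the inhomogeneous Cauchy problem for $A_{-1}$ on $X_{-1}$ together with the bootstrap via the differentiated equation for $w=\dot x$ (which is exactly where the $C^{2}$-hypothesis on $u$ and the compatibility condition $\left[\begin{smallmatrix}x_0\\u(0)\end{smallmatrix}\right]\in\dom(C\&D)$ enter), the derivation of \eqref{eq:varOfConstantsFormula} by differentiating $s\mapsto\mathbb{T}_{-1}(t-s)x(s)$ in $X_{-1}$, and the Laplace-transform computation combining the convolution theorem with \eqref{eq:transferFunctionDifferenceRelation}. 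Your argument is correct and is, in effect, a self-contained expansion of what the paper delegates to the literature.

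One small point on part~\ref{enum:ComSuppLaplaceTrafo}: you correctly derive the general identity $\hat y(\lambda)=C\resolvent{\lambda}{A}x(0)+\mathbf{G}(\lambda)\hat u(\lambda)$ and then impose $x(0)=0$, which the lemma as stated does \emph{not} assume. Strictly speaking, the conclusion $\hat y=\mathbf{G}\hat u$ requires the initial-state term, and this is how Staffans formulates Lemma~4.7.12. The paper only ever invokes the lemma with $x(0)=0$, so nothing downstream is affected, but your more general formula is the accurate one to record; the step ``the initial state vanishes'' is an assumption you are adding, not something the hypotheses give you.
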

\begin{proof}
Part~\ref{enum:ExistenceClassSol} is \cite[Lem.~4.7.8]{b_Staffans2005} and \cite[Prop.~4.3]{a_TucsnakWeissWellPosed2014}, Part~\ref{enum:VarConstFormula} is \cite[Thm.~4.7.11]{b_Staffans2005} and Part~\ref{enum:ComSuppLaplaceTrafo} is \cite[Lem.~4.7.12]{b_Staffans2005} and \cite[Prop.~4.3]{a_TucsnakWeissWellPosed2014}.
\end{proof}

\subsection{Distributional solutions in the sense of Staffans}
Motivated by the variations of constants formula~\eqref{eq:varOfConstantsFormula} we define generalised solutions in a distributional sense. Thereby we make use of  $Y$-valued distributions, which are linear functionals on the space of test functions $C^{\infty}_c(\posRealNum, Y')$. We denote the action of a distribution $y$ on a test function $\varphi \in C^{\infty}_c(\posRealNum, Y')$ by $y\left( \varphi \right)$. Note that any function $y \in \lpSpacelocDT{1}{\posRealNum}{Y}$ can be considered as a $Y$-valued distribution acting 
via
\begin{equation}
    y\left( \varphi \right) = \langle \varphi, y \rangle := \int_0^\infty \left\langle \varphi(t) , y(t) \right\rangle_{Y', Y} \intd{t}.
\end{equation}
\begin{definition}[{\cite[Def.~4.7.10]{b_Staffans2005}}]
\label{def:distributionalSolution}
A triple $(u,x,y)$ is called a \emph{generalised solution in the distributional sense} of the system node $\Sigma(A,B,C,\mathbf{G})$ if
\begin{enumerate}[label=(\alph*)]
    \item $u \in L^1_{\loc}([0,\infty), U)$ and $x \in C([0,\infty), X_{-1})$
    \item $x(t) = \semiGroup{t} x_0 + \int_0^t \mathbb{T}_{-1}\left( t - s \right) B u(s) \intd{s}$ for some $x_0 \in X$ and all $t \geq 0$
    \item $y$ is the $Y$-valued distribution given distributionally as 
    \begin{equation}
    y(t) = \derOrd{}{t}{2} \left( \left( C\&D \right) \int_0^t (t - s) \begin{bmatrix} x(s) \\ u(s) \end{bmatrix} \intd{s} \right), \quad t \geq 0    
    \end{equation}
    that means which acts on test functions $\varphi \in C^{\infty}_c(\posRealNum, Y')$ as
    \begin{equation}
    \begin{split}
        y\left( \varphi \right) &= \left\langle \varphi'', \left( C\&D \right) \int_0^\cdot (\cdot - s) \begin{bmatrix} x(s) \\ u(s) \end{bmatrix} \intd{s} \right\rangle \\ &= \int_0^\infty \left\langle \varphi''(t) , \left( C\&D \right) \int_0^t (t - s) \begin{bmatrix} x(s) \\ u(s) \end{bmatrix} \intd{s} \right\rangle_{Y', Y} \intd{t}.
    \end{split}
    \end{equation}
\end{enumerate}
\end{definition}
By the following lemma generalised solutions exist for any input $u \in L^1_{\loc}([0,\infty), U)$.
\begin{lemma}[{\cite[Thm.~3.8.2(i)~\&~Lem.~4.7.9]{b_Staffans2005}}]
\label{lem:existenceUniquenessGenSolutions}
For any $u \in L^1_{\loc}([0,\infty), U)$ and $x_0 \in X$ there exists a unique generalised solution in the distributional sense  of $\Sigma(A,B,C,\mathbf{G})$.

Furthermore, any classical solution $(u,x,y)$ is a generalized solution in the distributional sense.
\end{lemma}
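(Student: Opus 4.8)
The plan is to exploit the fact that Definition~\ref{def:distributionalSolution} is essentially constructive: once $u$ and $x_0$ are fixed, property (b) prescribes $x$ explicitly and property (c) prescribes $y$ explicitly, so the entire content of the lemma is to verify that these prescriptions produce objects of the required regularity (existence) and that nothing else is possible (uniqueness). I would therefore first build the candidate solution and then read off uniqueness.

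For existence I would define $x$ by the variation-of-constants formula of (b). To check $x \in C(\posRealNum, X_{-1})$ I split it into the orbit $t \mapsto \semiGroup{t} x_0$, which is continuous into $X$ and hence into $X_{-1}$, and the convolution $t \mapsto \int_0^t \mathbb{T}_{-1}(t-s) B u(s) \intd{s}$; since $Bu \in \lpSpacelocDT{1}{\posRealNum}{X_{-1}}$ and $\mathbb{T}_{-1}$ is strongly continuous on $X_{-1}$, this convolution is continuous into $X_{-1}$ by the standard estimate for strongly continuous semigroups against $L^1_{\loc}$ data. This settles (a) and (b).

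The real work, and the main obstacle, is making sense of (c), i.e.\ showing that the doubly integrated pair lies in $\dom(C\&D)$. By Fubini, $\int_0^t (t-s)\left[\begin{smallmatrix} x(s)\\ u(s)\end{smallmatrix}\right]\intd{s} = \int_0^t \left[\begin{smallmatrix} p(\tau)\\ q(\tau)\end{smallmatrix}\right]\intd{\tau}$ with $p(\tau) = \int_0^\tau x$ and $q(\tau) = \int_0^\tau u$. The key computation, using $\int_0^\tau \mathbb{T}_{-1}(r) v \intd{r} \in \dom(A_{-1}) = X$ with $A_{-1}\int_0^\tau \mathbb{T}_{-1}(r) v \intd{r} = \mathbb{T}_{-1}(\tau) v - v$ together with Fubini applied to the convolution term, is that $p(\tau) \in X$ and $A_{-1} p(\tau) + B q(\tau) = x(\tau) - x_0 \in X$; hence $\left[\begin{smallmatrix} p(\tau)\\ q(\tau)\end{smallmatrix}\right] \in \dom(C\&D)$. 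Integrating this identity once more and using that $A_{-1}\colon X \to X_{-1}$ is bounded gives $A_{-1}\int_0^t p + B\int_0^t q = \int_0^t (x - x_0) \in X$, so the doubly integrated pair indeed lies in $\dom(C\&D)$ and depends continuously on $t$ in the graph norm. Boundedness of $C\&D$ from its graph-norm domain to $Y$ then makes $t \mapsto C\&D \int_0^t (t-s)\left[\begin{smallmatrix} x(s)\\ u(s)\end{smallmatrix}\right]\intd{s}$ a continuous $Y$-valued function, whose second distributional derivative is a well-defined $Y$-valued distribution; this is the $y$ required by (c). Everything here beyond the domain-membership step is routine bookkeeping.

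Uniqueness is then immediate: evaluating (b) at $t = 0$ yields $x(0) = x_0$, so $x_0$ is determined by $x$, after which (b) fixes $x$ and (c) fixes $y$. For the final claim I would note that a classical solution satisfies (a) since $C(\posRealNum,U) \subset \lpSpacelocDT{1}{\posRealNum}{U}$ and $X \hookrightarrow X_{-1}$, and satisfies (b) by the variation-of-constants formula in Lemma~\ref{lem:existenceClassicalSolutions}. For (c), the classical setting gives the extra information that $\left[\begin{smallmatrix} x(s)\\ u(s)\end{smallmatrix}\right] \in \dom(C\&D)$ pointwise with $C\&D\left[\begin{smallmatrix} x(s)\\ u(s)\end{smallmatrix}\right] = y(s)$ continuous; since $C\&D$ is bounded, hence closed, it commutes with the Bochner integral, so $C\&D \int_0^t (t-s)\left[\begin{smallmatrix} x(s)\\ u(s)\end{smallmatrix}\right]\intd{s} = \int_0^t (t-s) y(s)\intd{s}$, whose second derivative returns the continuous function $y$. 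Thus the distribution in (c) is represented by the classical output $y$, and the classical solution is a generalised one.
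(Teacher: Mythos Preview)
The paper does not supply its own proof of this lemma; it is quoted from Staffans with the citation \cite[Thm.~3.8.2(i)~\&~Lem.~4.7.9]{b_Staffans2005}. Your argument is essentially a reconstruction of the reasoning behind those cited results and is sound in outline: the definition is constructive, so once you verify that the doubly integrated pair lands in $\dom(C\&D)$ continuously, existence and uniqueness drop out, and the classical case follows from closedness of $C\&D$.

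There is one genuine slip in the middle of your domain-membership argument. You assert that $A_{-1} p(\tau) + B q(\tau) = x(\tau) - x_0 \in X$, and hence that $\left[\begin{smallmatrix} p(\tau)\\ q(\tau)\end{smallmatrix}\right] \in \dom(C\&D)$. The identity is correct, but only as an equality in $X_{-1}$: for a general $u \in L^1_{\loc}$ the mild state $x(\tau)$ lies merely in $X_{-1}$, so $x(\tau) - x_0$ need not belong to $X$, and the once-integrated pair $\left[\begin{smallmatrix} p(\tau)\\ q(\tau)\end{smallmatrix}\right]$ need not lie in $\dom(C\&D)$. This is precisely why Definition~\ref{def:distributionalSolution} integrates \emph{twice}. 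Fortunately your next step does not actually use that intermediate claim: integrating the identity once more yields $A_{-1}\int_0^t p + B\int_0^t q = \int_0^t (x - x_0) = p(t) - t x_0$, and you have already argued $p(t) \in X$, so the doubly integrated pair does lie in $\dom(C\&D)$ and varies continuously in the graph norm. Simply delete the erroneous clause ``$\in X$; hence $\left[\begin{smallmatrix} p(\tau)\\ q(\tau)\end{smallmatrix}\right] \in \dom(C\&D)$'' and keep the identity as a computation in $X_{-1}$; the rest of your proof then goes through unchanged.
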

\section{BIBO stability}
\label{sec:BIBOStability}
Having introduced the concept of system nodes and their solutions, we can now define the notion of BIBO stability. This can be done in two different ways, either using classical solutions, or using the class of distributional solutions. It will turn out that for the case of finite-dimensional input and output spaces $U$ and $Y$ these two definitions are equivalent. 

\begin{definition}[Using distributional solutions]
\label{def:BIBOStabAllSolutions}
A system node $\Sigma(A,B,C,\mathbf{G})$ is called $L^\infty$-\emph{BIBO stable} if there exists $c > 0$ such that for any generalised solution in the distributional sense $(u,x,y)$ with $u \in L^\infty_\loc(\posRealNum, U)$ and $x(0) = 0$ we have that $y \in L^\infty_\loc(\posRealNum, Y)$ and for all $t > 0$
\begin{equation}
\label{eq:BIBOInequality}
    \| y \|_{\lpSpaceDT{\infty}{[0,t]}{Y}} \leq c \| u \|_{\lpSpaceDT{\infty}{[0,t]}{U}}.
\end{equation}
\end{definition}

\begin{definition}[Using classical solutions]
\label{def:BIBOStabClassicalSolutions}
The system node $\Sigma(A,B,C,\mathbf{G})$ is called $C^\infty$-\emph{BIBO stable} if there exists $c > 0$ such that for all classical solutions $(u,x,y)$ with $x(0) = 0$ and $u \in C_c^\infty((0,\infty),U)$ we have that $y \in C_b(\posRealNum, Y)$ and
\begin{equation}
    \| y \|_{\lpSpaceDT{\infty}{[0,\infty)}{Y}} \leq c \| u \|_{\lpSpaceDT{\infty}{[0,\infty)}{U}}.
\end{equation}
\end{definition}
\begin{remark}
    In the latter definition the choice of $C_c^\infty((0,\infty),U)$ may seem arbitrary. Indeed one could have chosen to use another subspace $K \subseteq C(\posRealNum, U)$ to define a notion of $K$-BIBO stability.
    
    However, Theorems~\ref{thm:classicalBIBOBoundedVariation} and \ref{thm:generalBIBOclassicalBIBO} show that if the input and output spaces are finite-dimensional, all such definitions are equivalent as long as $C_c^\infty((0,\infty),U) \subseteq K$.
    
\end{remark}
For the remainder of this section, we will restrict ourselves to the case that both the input space $U$ and the output space $Y$ are finite dimensional. This restriction will be used to characterise the input-output map by convolution operators, which seems to be no longer possible in the more general case.

The following two equivalences will be proved in the following sections.
\begin{theorem}
\label{thm:classicalBIBOBoundedVariation}
A system node $\Sigma(A,B,C,\mathbf{G})$ with finite-dimensional input and output spaces is $C^\infty$-BIBO stable if and only if there exists a measure of bounded total variation $h \in \mathcal{M}(\posRealNum, \complNum^{d_Y \times d_U})$ such that $\laplaceTr{h} = \mathbf{G}$ on the half-plane $\complNum_{\omega(\mathbb{T})}$.
\end{theorem}
\begin{proof}
Necessity and sufficiency are Proposition~\ref{prop:boundedVarImpliesBIBO} and Proposition~\ref{lem:transferFunctLaplaceMeasure} below.
\end{proof}
\begin{theorem}
\label{thm:generalBIBOclassicalBIBO}
A system node $\Sigma(A,B,C,\mathbf{G})$ with finite-dimensional input and output spaces is $C^\infty$-BIBO stable  if and only if it is $L^\infty$-BIBO stable.
\end{theorem}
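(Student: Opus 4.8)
The plan is to prove the two implications separately; the direction ``$L^\infty$-BIBO $\Rightarrow$ $C^\infty$-BIBO'' is essentially immediate, while ``$C^\infty$-BIBO $\Rightarrow$ $L^\infty$-BIBO'' will rest on the convolution structure supplied by Theorem~\ref{thm:classicalBIBOBoundedVariation}. For the easy direction, let $(u,x,y)$ be a classical solution with $x(0)=0$ and $u \in C^\infty_c((0,\infty),U)$. By Lemma~\ref{lem:existenceUniquenessGenSolutions} this triple is also a generalised solution in the distributional sense, and since $u \in \lpSpacelocDT{\infty}{\posRealNum}{U}$, the assumed $L^\infty$-BIBO estimate gives $\| y \|_{\lpSpaceDT{\infty}{[0,t]}{Y}} \le c\,\| u \|_{\lpSpaceDT{\infty}{[0,t]}{U}}$ for every $t>0$. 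As $u$ has compact support, the right-hand side equals $c\,\|u\|_{\lpSpaceDT{\infty}{\posRealNum}{U}}$ for all large $t$, whereas the left-hand side increases to $\|y\|_{\lpSpaceDT{\infty}{\posRealNum}{Y}}$; letting $t\to\infty$ yields the $C^\infty$-BIBO inequality, and continuity of the classical $y$ together with this bound gives $y \in C_b(\posRealNum,Y)$.

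For the converse I would first invoke Theorem~\ref{thm:classicalBIBOBoundedVariation} to obtain a measure $h \in \mathcal{M}(\posRealNum,\complNum^{d_Y\times d_U})$ of bounded total variation with $\laplaceTr{h}=\mathbf{G}$ on $\complNum_{\omega(\mathbb{T})}$. The heart of the argument is to show that the distributional output is the convolution $h*u$ for \emph{every} admissible input. For $u \in C^\infty_c((0,\infty),U)$ the associated triple is a classical solution by part~\ref{enum:ExistenceClassSol} of Lemma~\ref{lem:existenceClassicalSolutions} (note $\left[\begin{smallmatrix} 0 \\ u(0)\end{smallmatrix}\right]=\left[\begin{smallmatrix} 0 \\ 0\end{smallmatrix}\right]\in\dom(C\&D)$), so part~\ref{enum:ComSuppLaplaceTrafo} of Lemma~\ref{lem:existenceClassicalSolutions} gives $\hat y = \mathbf{G}\,\hat u = \laplaceTr{h}\,\hat u = \laplaceTr{h*u}$; injectivity of the Laplace transform then forces $y = h*u$.

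To pass from smooth to arbitrary $u \in \lpSpacelocDT{\infty}{\posRealNum}{U}$ I would use a density-and-continuity argument. Choose $u_n \in C^\infty_c((0,\infty),U)$ with $u_n \to u$ in $\lpSpacelocDT{1}{\posRealNum}{U}$. On one hand, since $|h|(\posRealNum)<\infty$, convolution with $h$ is continuous on $\lpSpacelocDT{1}{\posRealNum}{U}$, so $h*u_n \to h*u$ in $\lpSpacelocDT{1}{\posRealNum}{Y}$ and hence as distributions. On the other hand, the map assigning to an input its distributional output is continuous from $\lpSpacelocDT{1}{\posRealNum}{U}$ into the $Y$-valued distributions: by the variations of constants formula~\eqref{eq:varOfConstantsFormula} the states converge, $x_n \to x$ in $C([0,T],X_{-1})$ on each compact interval (using that $s\mapsto \mathbb{T}_{-1}(s)B$ is bounded in $\boundedOp{U}{X_{-1}}$ on compacta), whence the primitives $\int_0^\cdot (\cdot-s)\left[\begin{smallmatrix} x_n(s) \\ u_n(s)\end{smallmatrix}\right]\intd{s}$ converge in $\dom(C\&D)$ in its graph norm; boundedness of $C\&D$ and continuity of the second distributional derivative then give convergence of the outputs as distributions. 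Matching the two limits identifies $y = h*u$ for all $u \in \lpSpacelocDT{\infty}{\posRealNum}{U}$.

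It then remains to read off the estimate: for any such $u$ and any $t>0$,
\begin{equation}
\| y \|_{\lpSpaceDT{\infty}{[0,t]}{Y}} = \| h*u \|_{\lpSpaceDT{\infty}{[0,t]}{Y}} \le |h|(\posRealNum)\,\| u \|_{\lpSpaceDT{\infty}{[0,t]}{U}},
\end{equation}
which is exactly the $L^\infty$-BIBO inequality of Definition~\ref{def:BIBOStabAllSolutions} with $c=|h|(\posRealNum)$, and in particular $y \in \lpSpacelocDT{\infty}{\posRealNum}{Y}$. I expect the main obstacle to be the middle paragraph, namely establishing continuity of the input-to-distributional-output map and, in particular, the graph-norm convergence of the primitives through the unbounded operator $C\&D$; the finite-dimensionality of $U$ and $Y$ enters precisely through Theorem~\ref{thm:classicalBIBOBoundedVariation}, which is what makes the measure $h$ — and thus the whole convolution representation — available.
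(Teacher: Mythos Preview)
Your proposal is correct, and the easy direction matches the paper exactly. For the hard direction, however, you take a genuinely different route. The paper does \emph{not} argue by density/continuity: instead it first extends the identity $y=h\ast u$ from $u\in C_c^\infty$ to \emph{all} classical solutions with $x(0)=0$ by a smooth time cut-off construction (Proposition~\ref{thm:BIBOInequalityAllClassSols}), and then, for arbitrary $u\in L^\infty_\loc$, observes that the doubly integrated pair $(L_T,K_T)$ is itself the input/state of a classical solution, applies the already-extended convolution identity to that classical solution, and differentiates twice in the sense of distributions (Proposition~\ref{prop:BIBOIneqForDistrSol}). Your density argument sidesteps the cut-off step entirely, at the price of having to verify continuity of $u\mapsto y$ from $L^1_\loc$ into $Y$-valued distributions. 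The crucial point there---which you correctly flag as the obstacle---is that convergence $x_n\to x$ in $C([0,T],X_{-1})$ must be upgraded to convergence of the twice-integrated primitives $K_n\to K$ in the $X$-norm (not just $X_{-1}$), together with $\dot K_n\to\dot K$ in $X$, in order to get graph-norm convergence in $\dom(C\&D)$; this works because $z\mapsto\int_0^\sigma\mathbb{T}_{-1}(\tau)z\,\mathrm{d}\tau$ maps $X_{-1}$ boundedly into $X$ uniformly for $\sigma$ in compacta, but it deserves an explicit sentence. Both approaches ultimately exploit the same regularisation-by-double-integration idea hidden in Definition~\ref{def:distributionalSolution}; the paper's version is more explicit and self-contained, while yours is more conceptual and avoids the somewhat ad hoc cut-off of Proposition~\ref{thm:BIBOInequalityAllClassSols}.
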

\begin{proof}
Sufficiency follows from Lemma~\ref{lem:existenceUniquenessGenSolutions}, necessity is Proposition~\ref{prop:BIBOIneqForDistrSol} below.
\end{proof}

\subsection{Proof of Theorem~\ref{thm:classicalBIBOBoundedVariation}}

\subsubsection{From a measure of bounded variation to $C^\infty$-BIBO stability}
Let $\mathcal{M}(\posRealNum, \complNum^{d_Y \times d_U})$ denote the set of Borel measures of bounded total variation on $\posRealNum$ with values in $\complNum^{d_Y \times d_U}$. 
Furthermore, for $h \in \mathcal{M}(\posRealNum, \complNum^{d_Y \times d_U})$ let $\| h \|_{\mathcal{M}}$ denote the total variation of $h$ \cite[Sec.~3.2]{b_GripenbergLondenStaffans1990}. Then there is a simple, sufficient condition for a system node to be BIBO stable.
\begin{proposition}
\label{prop:boundedVarImpliesBIBO}
Let $\Sigma(A,B,C,\mathbf{G})$ be a system node. Assume there exists a measure of bounded total variation $h \in \mathcal{M}(\posRealNum, \complNum^{d_Y \times d_U})$ such that $\laplaceTr{h} = \mathbf{G}$ on the half-plane $\complNum_{\omega(\mathbb{T})}$. Then $\Sigma(A,B,C,\mathbf{G})$ is $C^\infty$-BIBO stable.
\end{proposition}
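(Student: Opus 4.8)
The plan is to show that, for every admissible input, the output is exactly the convolution of the measure $h$ with the input; the bounded-input-bounded-output estimate then drops out of the total-variation bound. So fix a classical solution $(u,x,y)$ with $x(0)=0$ and $u\in C_c^\infty((0,\infty),U)$, and observe that such a $u$ lies in $C^2_c([0,\infty),U)$, so that Lemma~\ref{lem:existenceClassicalSolutions} is applicable. By Part~\ref{enum:ComSuppLaplaceTrafo} of that lemma, $y$ possesses a Laplace transform on $\complNum_{\omega(\mathbb{T})}$ and $\hat y=\mathbf{G}\cdot\hat u$ there. Inserting the hypothesis $\mathbf{G}=\laplaceTr{h}$ yields $\hat y=\laplaceTr{h}\cdot\hat u$ on $\complNum_{\omega(\mathbb{T})}$.

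Next I would interpret the right-hand side as the transform of a convolution. Since $u$ is smooth and compactly supported, $\hat u$ is entire, and since $h$ has bounded total variation, $\laplaceTr{h}$ converges (at least) on $\complNum_0$; hence on the common right half-plane the product $\laplaceTr{h}\cdot\hat u$ equals the Laplace transform of the convolution $h*u$ given by $(h*u)(t)=\int_{[0,t]}u(t-s)\intd{h(s)}$, with $u$ extended by zero to the negative axis. Both $y$ and $h*u$ are continuous functions on $\posRealNum$ -- the former by the definition of a classical solution, the latter because $u$ is bounded and uniformly continuous while $h$ is finite, so that dominated convergence yields continuity. As their Laplace transforms coincide on a right half-plane, injectivity of the Laplace transform on continuous functions forces $y=h*u$ pointwise on $\posRealNum$.

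With this identification in hand the estimate is immediate: pulling the norm inside the integral and bounding by the total variation gives, for every $t\ge0$,
\begin{equation}
\|y(t)\|_Y=\left\|\int_{[0,t]}u(t-s)\intd{h(s)}\right\|_Y\le\|u\|_{\lpSpaceDT{\infty}{[0,\infty)}{U}}\,\|h\|_{\mathcal{M}}.
\end{equation}
In particular $y$ is bounded, so $y\in C_b(\posRealNum,Y)$, and the defining inequality of $C^\infty$-BIBO stability holds with $c=\|h\|_{\mathcal{M}}$.

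The one genuinely delicate step is the identification $y=h*u$. It requires matching the domains of convergence of the two Laplace transforms -- the transform of $h$ is only guaranteed on $\complNum_0$, that of $y$ on $\complNum_{\omega(\mathbb{T})}$, so one must argue on their intersection -- and, more importantly, verifying that $h*u$ is an honest continuous function rather than merely a distribution, so that the uniqueness theorem for Laplace transforms of continuous functions is applicable. Once $y$ is known to be the convolution against $h$, everything else is routine manipulation with the total-variation norm.
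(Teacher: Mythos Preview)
Your proof is correct and follows essentially the same route as the paper: use Lemma~\ref{lem:TransferFunctionMultiRepCompact} to get $\hat y=\mathbf{G}\cdot\hat u$, identify $y$ with $h\ast u$ via uniqueness of Laplace transforms, and then read off the $L^\infty$-estimate from the total-variation bound. The paper outsources the two steps you spell out by hand---the identification $y=h\ast u$ and the convolution inequality---to \cite[Thm.~3.8.1(iii) and Thm.~3.6.1(i)]{b_GripenbergLondenStaffans1990}, and handles the domain-of-convergence issue you flag in a separate remark, but the underlying argument is the same.
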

\begin{proof}
Let $u \in C_c^\infty((0,\infty),U)$ and $y\in C([0,\infty),Y)$ be the output of the classical solution with $x(0) = 0$. By Lemma~\ref{lem:TransferFunctionMultiRepCompact} we have that $\hat{y} = \mathbf{G} \cdot \hat{u}$. 
As the inverse Laplace transform of $\mathbf{G}$ exists by assumption, this implies by \cite[Thm.~3.8.1(iii)]{b_GripenbergLondenStaffans1990} that $y = h \ast u$ and thus by \cite[Thm.~3.6.1(i)]{b_GripenbergLondenStaffans1990} that $y \in  C_b(\posRealNum, Y)$ and that 
\begin{equation}
    \|y\|_{\lpSpaceDT{\infty}{\posRealNum}{Y}} \leq \| h \|_{\mathcal{M}} \|u\|_{\lpSpaceDT{\infty}{\posRealNum}{U}}.
\end{equation}
\end{proof}

\begin{remark}
    In general, $\laplaceTr{h}$ for $h \in \mathcal{M}(\posRealNum, \complNum^{d_Y \times d_U})$ only exists on $\complNum_0$ and thus $\laplaceTr{h}=\mathbf{G}$ only holds on $\complNum_0 \cap \complNum_{\omega(\mathbb{T})}$. However, then $\laplaceTr{h}$ will have an analytic continuation to $\complNum_{\omega(\mathbb{T})}$, which in the following we will  always identify with $\laplaceTr{h}$.
\end{remark}

\subsubsection{From $C^\infty$-BIBO stability to a measure of bounded variation}

\begin{proposition}
\label{prop:existenceConvOperator}
\label{lem:transferFunctLaplaceMeasure}
Let $\Sigma(A,B,C,\mathbf{G})$ with finite-dimensional input and output spaces be $C^\infty$-BIBO stable. Then there exists a measure of bounded total variation $h \in \mathcal{M}(\posRealNum, \complNum^{d_Y \times d_U})$ such that:
\begin{enumerate}
    \item For any classical solution $(u,x,y)$ with $u \in  C_c^\infty((0,\infty),U) $ and $x(0) = 0$ we have $ y = h \ast u$.
\item $\laplaceTr{h} = \mathbf{G}$ on the half-plane $\complNum_{\omega(\mathbb{T})}$.
\end{enumerate}
\end{proposition}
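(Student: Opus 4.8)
The plan is to realise the input--output map as a bounded, causal, time-invariant linear operator on $L^\infty$, and then to prove the representation theorem that such an operator, when the input and output spaces are finite-dimensional, is convolution by a measure of bounded total variation. First I would fix, for each $u\in C_c^\infty((0,\infty),U)$ with $x(0)=0$, the associated classical solution: by Lemma~\ref{lem:existenceClassicalSolutions}\ref{enum:ExistenceClassSol} (note $u(0)=0$, so $\left[\begin{smallmatrix}0\\0\end{smallmatrix}\right]\in\dom(C\&D)$) it exists, is unique, and has continuous output, so $\Phi\colon u\mapsto y$ is a well-defined linear map into $C_b(\posRealNum,Y)$, and $C^\infty$-BIBO stability is precisely the bound $\|\Phi u\|_\infty\le c\|u\|_\infty$. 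Using finite-dimensionality I would fix bases and reduce to scalar entries $\Phi_{ij}$. Two structural properties must then be read off from the variations-of-constants formula~\eqref{eq:varOfConstantsFormula} and the semigroup property: \emph{causality}, i.e.\ $(\Phi u)(t)$ depends only on $u|_{[0,t]}$ (immediate from the formula for $x(t)$ together with $y(t)=C\&D\left[\begin{smallmatrix}x(t)\\u(t)\end{smallmatrix}\right]$), and \emph{time-invariance}, i.e.\ $\Phi(\tau_a u)=\tau_a(\Phi u)$ for right shifts $\tau_a$ with $a>0$ (which keep the support inside $(0,\infty)$), since the shifted input produces exactly the shifted state by~\eqref{eq:varOfConstantsFormula}.

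Next I would construct the measure. On a reflected test function $\psi\in C_c^\infty((0,\infty))$, set $\Lambda(\psi):=(\Phi\phi)(T)$ with $\phi(\sigma):=\psi(T-\sigma)\in C_c^\infty((0,\infty))$ and $T>\sup\operatorname{supp}\psi$; time-invariance (right shift by $T'-T$) shows that $\Lambda(\psi)$ is independent of $T$, and the BIBO bound gives $|\Lambda(\psi)|\le c\|\psi\|_\infty$. Extending by density to $C_0((0,\infty))$ and applying the Riesz representation theorem ($C_0((0,\infty))'=\mathcal M$) furnishes $h_0\in\mathcal M((0,\infty),\complNum^{d_Y\times d_U})$ with $\|h_0\|_{\mathcal M}\le c$ and $\Lambda(\psi)=\int\psi\,dh_0$. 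Unwinding the definition (once more via time-invariance) then yields $\Phi u=h_0*u$ for all $t>\sup\operatorname{supp}u$.

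The main obstacle is the behaviour at the origin, i.e.\ the feedthrough. Reflected test functions supported away from $0$ necessarily vanish to all orders at the evaluation point, so $\Lambda$ cannot detect any atom of $h$ at $0$; yet $\laplaceTr{h}(\beta)\to h(\{0\})$ as $\realPart{\beta}\to\infty$, so part~2 forces $\mathbf G$ to possess a limit at infinity equal to this atom --- a genuine consequence of BIBO stability that must be \emph{produced}, not assumed, since system nodes need not be regular. My plan here is to study the residual operator $R:=\Phi-h_0*(\cdot)$, which is linear, bounded, causal and time-invariant, and which by the previous step outputs zero strictly past the input support (and, by causality, strictly before it). I would then argue that $R$ is \emph{local}, i.e.\ that $(Ru)(t)$ depends only on $u(t)$, using the sup-norm continuity of $R$ together with time-invariance to eliminate dependence on values of $u$ away from $t$; together with linearity and time-invariance this forces $Ru=Du$ for a constant matrix $D$ with $\|D\|\le c$. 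Setting $h:=h_0+D\delta_0\in\mathcal M(\posRealNum,\complNum^{d_Y\times d_U})$ gives $\Phi u=h*u$, which is part~1.

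Finally, for part~2 I would Laplace-transform the identity $\Phi u=h*u$, obtaining $\widehat{\Phi u}=\laplaceTr{h}\cdot\hat u$, while Lemma~\ref{lem:TransferFunctionMultiRepCompact}\ref{enum:ComSuppLaplaceTrafo} gives $\widehat{\Phi u}=\mathbf G\cdot\hat u$ on $\complNum_{\omega(\mathbb T)}$; choosing scalar inputs with $\hat u$ not identically zero, cancelling, and invoking analytic continuation (as in the remark following Proposition~\ref{prop:boundedVarImpliesBIBO}) yields $\laplaceTr{h}=\mathbf G$ on $\complNum_{\omega(\mathbb T)}$. I expect the locality/feedthrough step to be the crux of the argument, being exactly the $p=\infty$ subtlety flagged in the introduction.
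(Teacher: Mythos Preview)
Your approach is correct and genuinely different from the paper's. The paper does not build the measure by hand: it first extends the input--output map $S$ from $C_c^\infty((0,\infty),U)$ to an operator $\widetilde S$ on $C_c^\infty(\realNum,U)$ via $\widetilde S u := \tau_{-T}\,S\,\tau_T\,u$ (choosing $T$ so that $\tau_T u$ is supported in $(0,\infty)$), checks that $\widetilde S$ is linear, shift-invariant and $\|\cdot\|_\infty$-bounded, and then invokes the classical representation theorem for such operators (Stein--Weiss, applied componentwise) to obtain $h\in\mathcal M(\realNum,\complNum^{d_Y\times d_U})$ with $\widetilde S u=h\ast u$; causality of $S$ then forces $\operatorname{supp} h\subseteq[0,\infty)$. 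The feedthrough atom at $0$ is thus handled automatically inside the cited black box, and part~2 follows exactly as you outline.

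Your route avoids the extension to $\realNum$ and the Stein--Weiss reference, at the price of having to manufacture the atom at $0$ yourself. The locality step you sketch does go through, but the mechanism is slightly different from what you write: the crucial point is not time-invariance per se, but that $R$ is \emph{support-non-increasing} (from ``zero strictly past the support'' plus causality one gets $(Ru)(t)=0$ whenever $t\notin\operatorname{supp}u$, hence $(Ru)(t)$ depends only on the germ of $u$ at $t$), and then sup-norm boundedness upgrades germ-locality to point-locality (if $u(t)=0$, approximate $u$ in $\|\cdot\|_\infty$ by $u\chi_\epsilon$ vanishing near $t$). Time-invariance enters only afterwards, to make the resulting multiplier $D(t)$ constant. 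With that clarification your argument is complete and self-contained; the paper's version is shorter but relies on an external representation theorem.
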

\begin{proof}
Let $S:  C_c^\infty((0,\infty),U) \rightarrow C([0,\infty), Y)$ be the  \emph{input-output} operator of the system node that maps the input function $u$ to the corresponding output $y$ of the classical solution with $x(0) = 0$. By Lemma~\ref{lem:existenceClassicalSolutions} this operator is well-defined.

For $T \in \realNum$, let $\tau_T$ be the shift-operator, defined on a function $f: \realNum \rightarrow U$ as
\begin{equation}
    (\tau_T f)(t) := f(t - T), \qquad t \in \realNum.
\end{equation}
Considering a function on $\posRealNum$ as defined on all of $\realNum$ by trivially extending it with $0$, we can directly see that for all $T\geq0$ (i.e.\ for right-shifts) $\tau_T$ leaves $C_c^\infty((0,\infty),U)$ invariant, i.e.\ $\tau_T \left( C_c^\infty((0,\infty),U) \right) \subseteq C_c^\infty((0,\infty),U)$. Furthermore, from causality and the uniqueness of classical solutions of the system node, it follows that $S$ commutes with $\tau_T$ for all $T \geq 0$, i.e.\ $\tau_T S = S \tau_T$. 

Now define the operator $\widetilde{S}: C^\infty_c(\realNum, U) \rightarrow C(\realNum, Y)$ by setting for $u \in C^\infty_c(\realNum, U)$ 
\begin{equation}
    \widetilde{S} u := \tau_{-T} \, S \, \tau_{T} \, u,
\end{equation}
where $T \geq 0$ is such that $u(t) = 0$ for all $t < -T$. The compact support ensures the existence of such $T$ and $S$ commuting with right-shifts ensures that  $\widetilde{S} u$ does not depend on the chosen $T$.

Then $\widetilde{S}$ is clearly linear, invariant under left and right shifts and, as a result of the assumed BIBO stability of the system node, 
\begin{equation}
    \| \widetilde{S} u \|_{\lpSpaceDT{\infty}{\realNum}{Y}} \lesssim \| u \|_{\lpSpaceDT{\infty}{\realNum}{U}}. 
\end{equation}
Then by applying \cite[Thm.~I.3.16, Thm.~I.3.19 \& Thm.~I.3.20]{b_SteinWeiss2016} componentwise, we find that there exists $h \in \mathcal{M}(\realNum, \complNum^{d_Y \times d_U})$ such that
\begin{equation}
    \widetilde{S} u = h \ast u \qquad \textrm{for all } u \in C^\infty_c(\realNum, U).
\end{equation}
By the construction of $\widetilde{S}$ we have that $S = \widetilde{S}|_{C_c^\infty((0,\infty),U) }$ and therefore
\begin{equation}
\label{eq:convolRepresentationSmoothSols}
    S u = h \ast u \qquad \textrm{for all } u \in C_c^\infty((0,\infty),U).
\end{equation}
Finally, the causality of the system node implies that $h \in \mathcal{M}(\posRealNum, \complNum^{d_Y \times d_U})$.

For the second part, note that, as $h \in \mathcal{M}(\posRealNum, \complNum^{d_Y \times d_U})$, the Laplace transform $\laplaceTr{h}$ exists  and  from Equation~\eqref{eq:convolRepresentationSmoothSols} it thus follows that for all classical solutions $(u,x,y)$ with $u \in  C_c^\infty((0,\infty),U)$, we have $\hat{y} = \laplaceTr{h} \cdot \hat{u}$.
Comparing this with the result from Lemma~\ref{lem:TransferFunctionMultiRepCompact} yields the claim.
\end{proof}

\subsection{Proof of Theorem~\ref{thm:generalBIBOclassicalBIBO}}

\subsubsection{BIBO-inequality for all classical solutions with $x(0) = 0$}
By Definition~\ref{def:BIBOStabClassicalSolutions}, $C^\infty$-BIBO stability only requires the BIBO-inequality to hold for classical solutions $(u,x,y)$ with $u \in C_c^\infty((0,\infty),U)$ and $x(0) = 0$. As a first step we show that this  implies the same inequality for all classical solutions $(u,x,y)$ with $x(0) = 0$.

\begin{proposition}
\label{thm:BIBOInequalityAllClassSols}
Let $\Sigma(A,B,C,\mathbf{G})$  with finite-dimensional input and output spaces be $C^\infty$-BIBO stable. Then there exists a measure of bounded variation $h \in \mathcal{M}(\posRealNum, \complNum^{d_Y \times d_U})$ such that for any classical solution $(u,x,y)$ with $x(0) = 0$ we have that $y = h \ast u$ and thus that 
\begin{equation}
    \| y \|_{\lpSpaceDT{\infty}{[0,T]}{Y}} \leq \| h \|_{\mathcal{M}} \| u \|_{\lpSpaceDT{\infty}{[0,T]}{U}} \qquad \textrm{for all } T \geq 0.
\end{equation}
\end{proposition}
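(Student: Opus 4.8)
The goal is to upgrade the convolution representation $y = h \ast u$ from inputs in $C_c^\infty((0,\infty),U)$ to \emph{all} classical solutions with $x(0)=0$, and then read off the BIBO-inequality from the bounded-variation estimate. The measure $h$ is already in hand: Proposition~\ref{lem:transferFunctLaplaceMeasure} produces $h \in \mathcal{M}(\posRealNum, \complNum^{d_Y \times d_U})$ with $\laplaceTr{h} = \mathbf{G}$ on $\complNum_{\omega(\mathbb{T})}$ and $y = h \ast u$ for smooth compactly supported inputs. So the real content is an approximation/identification argument extending the identity $y = h \ast u$ to the larger class; the final inequality is then immediate from $\|h \ast u\|_{\infty} \leq \|h\|_{\mathcal{M}}\|u\|_\infty$ via \cite[Thm.~3.6.1(i)]{b_GripenbergLondenStaffans1990}, exactly as in the proof of Proposition~\ref{prop:boundedVarImpliesBIBO}.

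\textbf{Strategy via the Laplace transform.} First I would fix an arbitrary classical solution $(u,x,y)$ with $x(0)=0$. The clean route is to avoid approximating $u$ in the sup-norm (which need not work, since a general classical input need not be compactly supported nor decaying) and instead to use a transform-domain identification. For inputs that are additionally in $C_c^2([0,\infty),U)$, Lemma~\ref{lem:TransferFunctionMultiRepCompact}\ref{enum:ComSuppLaplaceTrafo} already gives $\hat y = \mathbf{G}\cdot\hat u = \laplaceTr{h}\cdot\hat u$ on $\complNum_{\omega(\mathbb{T})}$, and uniqueness of the Laplace transform together with \cite[Thm.~3.8.1(iii)]{b_GripenbergLondenStaffans1990} yields $y = h\ast u$ pointwise. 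To reach a general classical solution, I would exploit causality together with truncation: for fixed $T>0$, the value $y(t)$ for $t \in [0,T]$ depends only on $u|_{[0,T]}$. The plan is to modify $u$ outside $[0,T]$ — multiply by a smooth cutoff that equals $1$ on $[0,T]$ and has compact support, and smooth near $0$ if needed — to obtain $\tilde u \in C_c^2([0,\infty),U)$ agreeing with $u$ on $[0,T]$ with $\big[\begin{smallmatrix} 0 \\ \tilde u(0)\end{smallmatrix}\big]\in\dom(C\&D)$; by causality the corresponding outputs agree on $[0,T]$. For $\tilde u$ we know $\tilde y = h\ast\tilde u$, and since $(h\ast u)(t)=(h\ast\tilde u)(t)$ for $t\le T$ by causality of $h$ (supported on $\posRealNum$), we conclude $y = h\ast u$ on $[0,T]$, hence everywhere as $T$ was arbitrary.

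\textbf{The main obstacle.} The delicate point is the compatibility condition at the initial time. A classical solution requires $\big[\begin{smallmatrix} x(t) \\ u(t)\end{smallmatrix}\big]\in\dom(C\&D)$ for all $t$, and the existence result Lemma~\ref{lem:existenceClassicalSolutions}\ref{enum:ExistenceClassSol} needs $u\in C^2$ and the node condition on $\big[\begin{smallmatrix} x_0 \\ u(0)\end{smallmatrix}\big]$; with $x_0=0$ this forces $Bu(0)\in X$, i.e.\ a constraint on $u(0)$. When I truncate and regularise $u$ to build $\tilde u$, I must preserve $\tilde u(0)=u(0)$ (and enough smoothness) so that $\tilde u$ again generates a genuine classical solution with $x(0)=0$ to which Lemma~\ref{lem:TransferFunctionMultiRepCompact}\ref{enum:ComSuppLaplaceTrafo} applies; the cutoff can freely alter $u$ away from $0$ and beyond $T$, so this is arrangeable but must be stated carefully. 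A secondary subtlety is justifying that causality of the \emph{node} (uniqueness of classical solutions, already invoked in Proposition~\ref{prop:existenceConvOperator}) matches the causality of the \emph{convolution} $h\ast\cdot$; both follow from $\mathrm{supp}\, h\subseteq\posRealNum$ and the finite-speed dependence of $y|_{[0,T]}$ on $u|_{[0,T]}$, which I would record explicitly. Once the identity $y=h\ast u$ holds for every classical solution with $x(0)=0$, the stated estimate on $[0,T]$ follows directly from $\|h\ast u\|_{\lpSpaceDT{\infty}{[0,T]}{Y}}\le \|h\|_{\mathcal M}\,\|u\|_{\lpSpaceDT{\infty}{[0,T]}{U}}$, again using that $h$ is supported in $\posRealNum$ so only $u|_{[0,T]}$ enters.
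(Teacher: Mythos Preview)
Your overall strategy---truncate the input beyond $T$, use causality so outputs agree on $[0,T]$, then invoke the Laplace identity $\hat y=\mathbf G\cdot\hat u=\laplaceTr{h}\cdot\hat u$ to identify $y=h\ast u$---is exactly the paper's route. The gap is in the execution of the truncation step. You want to produce $\tilde u\in C_c^2([0,\infty),U)$ agreeing with $u$ on $[0,T]$ so that Lemma~\ref{lem:TransferFunctionMultiRepCompact}\ref{enum:ComSuppLaplaceTrafo} applies; but a general classical solution only has $u\in C([0,\infty),U)$, so any $\tilde u$ coinciding with $u$ on $[0,T]$ is at best continuous there, not $C^2$. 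Multiplying by a smooth cutoff does not repair interior regularity, and ``smoothing near $0$'' destroys the agreement on $[0,T]$. Hence you cannot feed $\tilde u$ directly into Lemma~\ref{lem:TransferFunctionMultiRepCompact}\ref{enum:ComSuppLaplaceTrafo} as written.

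The paper circumvents this in two moves. First, it only cuts off smoothly \emph{after} $T$, obtaining $u_T\in C_c([0,\infty),U)$ (not $C_c^2$), and then \emph{constructs} the classical solution $(u_T,x_T,y_T)$ by hand: on $[0,T]$ it is the given $(u,x,y)$, and on $[T,\infty)$ it is the classical solution starting from state $x(T)$ driven by the $C^2$ tail $w_{u(T)}$ (where Lemma~\ref{lem:existenceClassicalSolutions}\ref{enum:ExistenceClassSol} does apply). Matching values and derivatives at $t=T$ shows the patched triple is a genuine classical solution. Second, instead of citing Lemma~\ref{lem:TransferFunctionMultiRepCompact}\ref{enum:ComSuppLaplaceTrafo}, the paper re-runs the argument of \cite[Lem.~4.7.12]{b_Staffans2005} directly for this compactly supported classical solution to obtain $\widehat{y_T}=\mathbf G\cdot\widehat{u_T}$; that proof only needs a classical solution with compactly supported data, not $C^2$ regularity of the input. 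From there your concluding steps go through verbatim. So the missing ingredient is not the initial compatibility you flagged, but rather (i) building the truncated classical solution piecewise, and (ii) invoking the Laplace identity at the level of its proof rather than the $C^2_c$-statement.
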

\begin{proof} 
Let $(u,x,y)$ be a classical solution of the system node with $x(0) = 0$.

Define for any $T > 0$ the smoothly cut-off input $u_T \in C_c(\posRealNum, U)$ as
\begin{equation}
    u_T(t) := \begin{cases} u(t) & 0 \leq t \leq T \\ u(T) \xi(t - T) & T < t \leq T+1 \\ 0 & T+1 < t \end{cases},
\end{equation}
where $\xi \in C^\infty([0,1], \realNum)$ is some smooth function satisfying $\xi(0) = 1$, $\xi(1) = \xi'(1) = \xi''(1) = 0$. Our aim is to show that there exists a classical  solution $(u_T,x_T,y_T)$ with this input.
For this, consider first the input function $w_{u(T)}: \posRealNum \rightarrow U$ defined as
\begin{equation}
    w_{u(T)}(t) := \begin{cases} u(T) \xi(t) & t \leq 1 \\ 0 & 1 < t \end{cases},
\end{equation}
for which clearly $w_{u(t)} \in C^2([0,\infty),U)$. Furthermore, as by \cite[Thm.~4.7.11]{b_Staffans2005}
\begin{equation}
 \begin{bmatrix} x(T) \\ w_{u(T)}(0) \end{bmatrix} = \begin{bmatrix} x(T) \\ u(T) \end{bmatrix} \in \dom(C \& D),   
\end{equation}
by \cite[Lem.~4.7.8]{b_Staffans2005} there exists a unique classical solution $(w_{u(T)}, x_w, y_w)$ of the system node with $x_w(0) = x(T)$.

Then define $x_T: \posRealNum \rightarrow X$ and $y_T: \posRealNum \rightarrow Y$ as
\begin{equation}
    x_T(t) := \begin{cases} x(t) & t \leq T \\ x_w(t - T) & T < t \end{cases} \quad \andMath \quad y_T(t) := \begin{cases} y(t) & t \leq T \\ y_w(t - T) & T < t \end{cases}.
\end{equation}
Then we have $u_T \in C(\posRealNum, U)$, $x_T \in C(\posRealNum, X)$, $x_T \in C^1([0,T], X)$ and $x_T \in C^1([T,\infty], X)$  by construction. Furthermore, as both $(u,x,y)$ and $(w_{u(T)}, x_w, y_w)$ are classical solutions of the system node, it follows from Equation~\eqref{eq:systemNodeDifferentialEqns}  that 
\begin{equation}
\begin{split}
    &\Dot{x}_w(0) = A x_w(0) + B w_{u(T)}(0) = A x(T) + B u(T) = \Dot{x}(T), \\
    &y_w(0) = C\&D \begin{bmatrix} x_w(0) \\ w_{u(T)}(0) \end{bmatrix} = C\&D \begin{bmatrix} x(T) \\ u(T) \end{bmatrix} = y(T),
\end{split}
\end{equation}
and thus we can conclude that $x_T \in C^1(\posRealNum, X)$ and $y_T \in C(\posRealNum, Y)$. Furthermore, Equation~\eqref{eq:systemNodeDifferentialEqns} is satisfied on $\posRealNum$. Hence $(u_T,x_T,y_T)$ is a classical solution of the system node.

We can now argue completely analogous to \cite[Lem.~4.7.12]{b_Staffans2005} to first conclude that $u_T$, $x_T$ and $y_T$ have Laplace transforms defined on $\complNum_{\omega(\mathbb{T})}$ and that they satisfy 
\begin{equation}
    \widehat{y_T}(s) = \mathbf{G}(s) \cdot \widehat{u_T}(s), \qquad s \in \complNum_{\omega_\mathbb{T}}.
\end{equation}
From the relation between $\widehat{y_T}$ and $\widehat{u_T}$, and $\mathbf{G}$ having an inverse Laplace transform $h \in \mathcal{M}(\posRealNum, \complNum^{d_Y \times d_U})$ by Proposition~\ref{lem:transferFunctLaplaceMeasure}, we then find that $y_T = h \ast u_T$.

Finally, by causality of the system node and the uniqueness of classical solutions we have for all $T \geq 0$ that $y|_{[0,T]} = y_T|_{[0,T]}$.
Thus for any $t \geq 0$ we find in particular
\begin{equation}
    y(t) = y|_{[0,t]}(t) = y_t|_{[0,t]}(t) = (h \ast u_t)(t) = (h \ast u)(t),
\end{equation}
and thus can conclude that $y = h \ast u$.
The inequality follows then immediately from this representation and $h \in \mathcal{M}(\posRealNum, \complNum^{d_Y \times d_U})$.
\end{proof}

\subsubsection{BIBO-inequality for generalised distributional solutions}
Proposition~\ref{thm:BIBOInequalityAllClassSols} shows that in a $C^\infty$-BIBO stable system we have for any classical solution $(u,x,y)$ with $x(0) = 0$ and any $T\geq0$ that
    $\| y \|_{\lpSpaceDT{\infty}{[0,T]}{Y}} \leq \| h \|_{\mathcal{M}} \| u \|_{\lpSpaceDT{\infty}{[0,T]}{U}}$. 
We extend this inequality to all distributional solutions with $u \in \lpSpacelocDT{\infty}{\posRealNum}{U}$.
\begin{proposition}
\label{prop:BIBOIneqForDistrSol}
Let $\Sigma(A,B,C,\mathbf{G})$ with finite-dimensional input and output spaces be $C^\infty$-BIBO stable and let $h \in\mathcal{M}(\posRealNum, \complNum^{d_Y \times d_U})$ be the inverse Laplace transform of $\mathbf{G}$ that exists by Proposition~\ref{lem:transferFunctLaplaceMeasure}. Let furthermore $(u,x,y)$ be a generalised solution in the distributional sense with $x(0) = 0$ and $u \in L^\infty_\loc(\posRealNum, U)$. Then $y = u \ast h$, $y \in L^\infty_\loc(\posRealNum, Y)$ and
\begin{equation}
\label{eq:distSolutonsBIBOIneq}
    \| y \|_{\lpSpaceDT{\infty}{[0,T]}{Y}} \leq \|h\|_{\mathcal{M}} \| u \|_{\lpSpaceDT{\infty}{[0,T]}{U}} \qquad \textrm{for all } T > 0.
\end{equation}
\end{proposition}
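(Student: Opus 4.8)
The plan is to establish the convolution identity $y = u \ast h$ for distributional solutions by a density argument and then read off both the regularity $y \in \lpSpacelocDT{\infty}{\posRealNum}{Y}$ and the inequality~\eqref{eq:distSolutonsBIBOIneq} from it. The point is that the identity is already known for smooth inputs by Proposition~\ref{thm:BIBOInequalityAllClassSols}, and both sides of $y = h \ast u$ are continuous in $u$ in suitable topologies: the right-hand side in the $L^1_{\loc}$-to-$L^1_{\loc}$ sense, and the left-hand side as a map from $L^1_{\loc}$-inputs to distributional outputs. Once $y = u \ast h$ is in hand, the bound follows from the elementary estimate $\|(h \ast u)(t)\|_Y \leq \|h\|_{\mathcal{M}} \|u\|_{\lpSpaceDT{\infty}{[0,t]}{U}}$ for the convolution of a measure supported on $\posRealNum$ with a bounded function, since causality ensures that only $u|_{[0,t]}$ enters $(h\ast u)(t)$.

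I would fix $T > 0$; because both $y$ and $h \ast u$ are causal it suffices to prove $y = h \ast u$ on $[0,T]$. Choose mollified approximants $u_n \in C_c^\infty((0,\infty),U)$ with $u_n \to u$ in $\lpSpaceDT{1}{[0,T+1]}{U}$ and $\|u_n\|_{\lpSpaceDT{\infty}{[0,T+1]}{U}}$ bounded (ordinary mollification does not increase the supremum norm). Let $(u_n,x_n,y_n)$ be the classical solutions with $x_n(0)=0$ furnished by Lemma~\ref{lem:existenceClassicalSolutions}; by Proposition~\ref{thm:BIBOInequalityAllClassSols} we have $y_n = h \ast u_n$. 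Young's inequality for measure--function convolution gives $\|h \ast u_n - h \ast u\|_{\lpSpaceDT{1}{[0,T]}{Y}} \leq \|h\|_{\mathcal{M}}\|u_n-u\|_{\lpSpaceDT{1}{[0,T]}{U}} \to 0$, so $h \ast u_n \to h \ast u$ in $\lpSpaceDT{1}{[0,T]}{Y}$, hence as distributions.

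The crux is to show that simultaneously $y_n \to y$ as distributions, i.e.\ that the distributional output depends continuously on the input in the $L^1_{\loc}$-topology. Writing $w_n(t) := \int_0^t (t-s)\begin{bmatrix} x_n(s)\\ u_n(s)\end{bmatrix}\intd{s}$ and $w(t)$ analogously for $(u,x)$, the defining formula reads $y_n(\varphi) = \int_0^\infty \langle \varphi''(t), (C\&D) w_n(t)\rangle_{Y',Y}\intd{t}$, and likewise for $y$; since $C\&D$ is bounded from $\dom(C\&D)$ with its graph norm into $Y$, it suffices to prove $w_n(t) \to w(t)$ in the graph norm, uniformly for $t$ in compact sets. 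This amounts to convergence of the three pieces $\int_0^t(t-s)u_n(s)\intd{s}$ in $U$, of $\int_0^t(t-s)x_n(s)\intd{s}$ in $X$, and of $A_{-1}\int_0^t(t-s)x_n(s)\intd{s} + B\int_0^t(t-s)u_n(s)\intd{s}$ in $X$. The first is immediate as the kernel is bounded on $[0,t]$. For the second I would use the variation-of-constants formula together with the smoothing property that $x \mapsto \int_0^\sigma \mathbb{T}_{-1}(r)x\intd{r}$ maps $X_{-1}$ boundedly into $X$ and depends strongly continuously on $\sigma$; after interchanging the order of integration this yields $\int_0^t(t-s)x_n(s)\intd{s} = \int_0^t \Psi(t-r)Bu_n(r)\intd{r}$ with $\Psi(\sigma) := \int_0^\sigma(\sigma-r)\mathbb{T}_{-1}(r)\,\cdot\,\intd{r} \in \boundedOp{X_{-1}}{X}$ bounded uniformly for $\sigma$ in compacts, so that $\Psi(\cdot)B \in \boundedOp{U}{X}$ and the integral converges as $u_n \to u$ in $L^1$. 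For the third piece the integration-by-parts identity $A_{-1}\int_0^t(t-s)x(s)\intd{s} + B\int_0^t(t-s)u(s)\intd{s} = \int_0^t x(s)\intd{s}$ (valid since $x(0)=0$, using $\dot x = A_{-1}x + Bu$) identifies it with $\int_0^t x_n(s)\intd{s} = \int_0^t \big(\int_0^{t-r}\mathbb{T}_{-1}(r')\,\cdot\,\intd{r'}\big)Bu_n(r)\intd{r}$, which converges in $X$ by the same reasoning. Hence $(C\&D)w_n(t) \to (C\&D)w(t)$ uniformly on compact $t$-intervals, and integrating against the compactly supported $\varphi''$ gives $y_n(\varphi)\to y(\varphi)$.

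Combining the two limits yields $y = h \ast u$ as distributions on $(0,T)$, and since $T$ was arbitrary, $y = u \ast h$ on $\posRealNum$; the membership $y \in \lpSpacelocDT{\infty}{\posRealNum}{Y}$ and the inequality then follow from the measure-convolution bound recorded above. I expect the graph-norm convergence of $w_n(t)$ to be the main obstacle — concretely, the verification that the iterated time integral maps the merely $X_{-1}$-valued state trajectory into $X$ and depends continuously on $u$ in $L^1$ — for which the smoothing property of the integrated semigroup and the integration-by-parts identity are the essential ingredients; the remaining steps are routine applications of Young's inequality and causality.
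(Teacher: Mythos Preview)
Your argument is correct, but it takes a genuinely different route from the paper. You approximate $u$ by smooth inputs $u_n \in C_c^\infty((0,\infty),U)$, invoke Proposition~\ref{thm:BIBOInequalityAllClassSols} for each $u_n$, and then pass to the limit by establishing $L^1_{\loc}$-continuity of the input-to-distributional-output map via the integrated-semigroup estimates and the integration-by-parts identity you describe. The paper instead avoids approximation entirely: it observes directly that for the fixed input $u_T := u|_{[0,T]}$ the twice-integrated pair $\bigl(L_T,K_T\bigr) = \int_0^\cdot(\cdot-s)\bigl[\begin{smallmatrix}u_T(s)\\x_T(s)\end{smallmatrix}\bigr]\intd{s}$ already furnishes a \emph{classical} solution $\bigl(L_T,K_T,(C\&D)\bigl[\begin{smallmatrix}K_T\\L_T\end{smallmatrix}\bigr]\bigr)$ (by the same Staffans lemma underlying the definition of distributional outputs), applies Proposition~\ref{thm:BIBOInequalityAllClassSols} once to obtain $(C\&D)\bigl[\begin{smallmatrix}K_T\\L_T\end{smallmatrix}\bigr] = h \ast L_T$, and then differentiates twice using the convolution--derivative commutation rule to recover $y_T = h \ast u_T$. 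The paper's route is shorter and exploits the very structure of the distributional output formula (the second derivative of a classical output), whereas your density argument is more generic in spirit and transfers the work to the continuity verification; both ultimately rest on the same smoothing property of the double time-integral that places $\bigl[\begin{smallmatrix}K\\L\end{smallmatrix}\bigr]$ in $\dom(C\&D)$.
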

\begin{proof}
Let $u \in L^\infty_\loc(\posRealNum, U)$ and let $(u,x,y)$ be the corresponding generalised solution in the distributional sense with $x(0) = 0$.

Then consider for any $T > 0$ the input function $u_T := u|_{[0,T]} \in \lpSpaceDT{\infty}{\posRealNum}{U}$ giving rise to generalised solutions $(u_T, x_T, y_T)$, which by causality agree on $[0,T]$ with $(u,x,y)$.
Define furthermore
\begin{equation}
M_T(t) := \int_0^t u_T(s) \intd{s}, \qquad
        \begin{bmatrix} K_T(t) \\L_T(t) \end{bmatrix} :=  \int_0^t (t - s) \begin{bmatrix} x_T(s) \\u_T(s) \end{bmatrix} \intd{s}.
\end{equation}
Then $M_T$ and $L_T$ are locally absolutely continuous as Lebesgue integrals. In addition, $\|M_T\|_{\lpSpaceDT{\infty}{\posRealNum}{U}} \leq T \| u_T \|_{\lpSpaceDT{\infty}{\posRealNum}{U}}$ and $\|L_T\|_{\lpSpaceDT{\infty}{\posRealNum}{U}} \leq T^2 \| u_T \|_{\lpSpaceDT{\infty}{\posRealNum}{U}}$, so both functions are bounded.

By \cite[Lem.~4.7.9]{b_Staffans2005} $K_T \in C^1 (\posRealNum, X)$ and solves 
\begin{equation}
    \Dot{K}_T(t) = A K_T(t) + B L_T(t),
\end{equation}
for almost all $t > 0$ and, by \cite[Thm.~4.7.11]{b_Staffans2005} due to $L_T$ being continuous, even for all $t>0$. Thus $(L_T, K_T, (C \& D) \left[ \begin{smallmatrix} K_T(t) \\L_T(t) \end{smallmatrix} \right])$ is a classical solution of the system node.

As the system node is $C^\infty$-BIBO stable, we have $(C \& D) \left[ \begin{smallmatrix} K_T(t) \\L_T(t) \end{smallmatrix} \right] = h \ast L_T$ by Proposition~\ref{thm:BIBOInequalityAllClassSols}. Thus we find
\begin{equation}
    y_T = \frac{\textrm{d}^2}{\textrm{d}t^2} \left( (C\& D) \begin{bmatrix} K_T(t) \\L_T(t) \end{bmatrix} \right) =  \frac{\textrm{d}^2}{\textrm{d}t^2} \left( h \ast L_T \right).
\end{equation}
But then, by \cite[Thm.~3.7.1]{b_GripenbergLondenStaffans1990}, we find that $y_T =  h \ast \frac{\textrm{d}^2}{\textrm{d}t^2} L_T = h \ast u_T $ and thus, as this holds for any $T > 0$ and by causality we have $y = h \ast u$.
Now, as $h \in\mathcal{M}(\posRealNum, \complNum^{d_Y \times d_U})$, if $u \in L^\infty_\loc([0,\infty),U)$ then $y = u \ast h \in L^\infty_\loc([0,\infty),Y)$ by \cite[Cor.~3.6.2(i)]{b_GripenbergLondenStaffans1990}  and Equation~\eqref{eq:distSolutonsBIBOIneq} follows from \cite[Thm.~3.6.1(i)]{b_GripenbergLondenStaffans1990}.
\end{proof}

\begin{remark}
Note that the previous proposition does not pose a contradiction to examples such as the ones provided in \cite[Ex.~2.5.9]{b_Grafakos2014}, \cite[Rem.~3.8]{a_WeissRepresentationShiftInvariant1991} or \cite[Thm.~3.1]{a_PartingtonRepresentationShiftInvariant1998} of bounded, shift-invariant operators from $L^\infty$ to $L^\infty$ that do not admit a representation in terms of a convolution or a transfer function on all of $L^\infty$. Instead, the result here merely shows that these input-output operators cannot arise from or be represented in the form of a system node. 
\end{remark}

\subsection{Relation to the results from \cite{a_UnserNoteOnBiboStability}}
\label{sec:RelationToUngerResult}
At a cursory glance, the first central results of this contribution -- the characterisation of $C^\infty$-BIBO stability by the impulse response being a measure of bounded total variation in Theorem~\ref{thm:classicalBIBOBoundedVariation} and the equivalence to $L^\infty$-BIBO stability in Theorem~\ref{thm:generalBIBOclassicalBIBO} -- may seem to capture the same statement as \cite{a_UnserNoteOnBiboStability} in particular Theorem 4 therein. On a closer look there is however a major distinction to be made out.

What \cite{a_UnserNoteOnBiboStability} considers is a continuous operator $T_h: \mathcal{D}(\realNum) \rightarrow \mathcal{D}'(\realNum)$ defined on the space of test functions $\mathcal{D}(\realNum)$ in terms of a convolution $T_h(f) = f \ast h $ with a distribution $h$ and the question for which type of $h$ it admits a continuous extension in particular to an operator $T_h: L^\infty \rightarrow L^\infty$. It finds that this is the case if and only if $h \in \mathcal{M}(\posRealNum)$ is a measure of bounded total variation.

In contrast, this contribution studies the question under which circumstances the input-output operator of a system node is well-defined and bounded as an operator from $L^\infty$ to $L^\infty$. The way to showing that this again reduces to the impulse response existing and being a measure of bounded total variation leads -- similar to \cite{a_UnserNoteOnBiboStability} -- to first considering the restricted input-output operator acting only on the test functions as input, and using it to derive existence and boundedness of the full input-output operator.

While this may seem like just a direct application of Theorem 4 from \cite{a_UnserNoteOnBiboStability}, this is not the case. This result does show that a continuous extension of the restricted output operator exists and is given by the convolution with the distribution defining its behaviour on the test function. However, it is not at all clear that the thus constructed operator from $L^\infty$ to $L^\infty$ is actually the full input-output operator of the system. After all, it is not even clear that the output for any $L^\infty$ input is again in $L^\infty$. And furthermore by \cite[Ex.~2.5.9]{b_Grafakos2014} uniqueness of the extension is potentially not even given if boundedness of the full input-output operator was assumed.

Thus another way of understanding the main result from the first part of this contribution is showing that the input-output behaviour of the system node is first of all well-defined as an operator from $L^\infty$ to $L^\infty$ and that it is precisely the one given by the convolution extension as studied in \cite{a_UnserNoteOnBiboStability}.

\section{Sufficient conditions for BIBO stability}
\label{sec:SufficientConditions}
We start by giving sufficient (but not necessary) conditions for BIBO stability of particular types of system nodes. 

\subsection{Riesz-spectral systems}
\label{sec:RieszSpectralChapter}
We consider special system nodes $\Sigma(A,B,C, \mathbf{G})$, where 
$A: \dom(A) \rightarrow X$ is supposed to be a Riesz-spectral operator on a Hilbert space $X$ \cite{b_TucsnakWeiss2009} and $B:\complNum \rightarrow X_{-1}$ and $C: X_1 \rightarrow \complNum$ are scalar control and observation operators. This means that there exists a Riesz-basis $\set{\phi_n}_{n \in \natNum}$ in $X$ such that each $\phi_n$ is an eigenvector of $A$ with eigenvalue $\lambda_n$. By $\set{\psi_n}_{n \in \natNum}$ we denote the associated biorthogonal sequence satisfying $\innerProd{\phi_n}{\psi_m} = \delta_{n,m}$ for all $m,n \in \natNum$. Furthemore, there exist sequences $(b_n)_{n \in \natNum}$ and $(c_n)_{n \in \natNum}$ given by $b_n := \innerProd{Bu}{\psi_n}$ and $c_n := C \phi_n$ respectively such that the operators $B$ and $C$ are given by $ B u = u \cdot b$ and $C x = \innerProd{x}{c}$ for all $u \in U$ and $x \in X_1$, respectively, with elements $b := \sum_k b_k \phi_k \in X_{-1}$ and $c := \sum_k c_k \psi_k \in X_{-1}$.
Such a system node is called a \emph{Riesz-spectral system}.

\label{sec:BIBORiesz:SufficientCondition}
Using the sequences  $\lambda_n$, $b_n$ and $c_n$ we can now give a sufficient condition for BIBO stability of a corresponding Riesz-spectral system node with scalar input and output.
\begin{proposition}
\label{prop:RieszSpectralSufficient}
Let $\Sigma(A,B,C,\mathbf{G})$ be a Riesz-spectral system with $A$ having eigenvalues only in 
the open left half-plane. Assume further that the coefficient sequences $(b_n)_{n \in \natNum}$ and $(c_n)_{n \in \natNum}$ satisfy 
\begin{equation}\label{condRiesz}
    \sum_k \left| \frac{b_k c^\ast_k}{\realPart{\lambda_k}} \right| < \infty.
\end{equation}
Then  $\Sigma(A,B,C,\mathbf{G})$ is $L^\infty$-BIBO stable.
\end{proposition}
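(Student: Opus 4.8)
The plan is to produce a scalar measure of bounded total variation whose Laplace transform is $\mathbf{G}$; then Theorem~\ref{thm:classicalBIBOBoundedVariation} gives $C^\infty$-BIBO stability and Theorem~\ref{thm:generalBIBOclassicalBIBO} upgrades it to $L^\infty$-BIBO stability. Note that here $d_U = d_Y = 1$, so we work in $\mathcal{M}(\posRealNum, \complNum)$.

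First I would compute the transfer function explicitly. Since each $\phi_k$ is an eigenvector with $\resolvent{s}{A}\phi_k = (s - \lambda_k)^{-1}\phi_k$, evaluating the difference relation \eqref{eq:transferFunctionDifferenceRelation} on the Riesz basis gives, for $\alpha, \beta \in \complNum_{\omega(\mathbb{T})}$,
\[
    \mathbf{G}(\alpha) - \mathbf{G}(\beta) = \sum_k b_k c_k^* \left( \frac{1}{\alpha - \lambda_k} - \frac{1}{\beta - \lambda_k} \right).
\]
By Remark~\ref{rem:transferFunctionDifferConstant} this determines $\mathbf{G}$ up to an additive constant, so $\mathbf{G}(s) = \sum_k \frac{b_k c_k^*}{s - \lambda_k} + D$ for some $D \in \complNum$. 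For $\realPart{s} \geq 0$ one has $|s - \lambda_k| \geq |\realPart{\lambda_k}|$, so together with \eqref{condRiesz} the series converges absolutely and uniformly on $\complNum_0$.

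Next I would define the candidate measure $h := g(t)\intd{t} + D\delta_0$, where $\delta_0$ is the unit mass at $0$ and $g(t) := \sum_k b_k c_k^* e^{\lambda_k t}$. The decisive estimate is
\[
    \int_0^\infty |g(t)| \intd{t} \leq \sum_k |b_k c_k^*| \int_0^\infty e^{\realPart{\lambda_k} t} \intd{t} = \sum_k \frac{|b_k c_k^*|}{|\realPart{\lambda_k}|} < \infty,
\]
where we used $\realPart{\lambda_k} < 0$ and finiteness is exactly hypothesis \eqref{condRiesz}. Hence the series for $g$ converges in $L^1(\posRealNum, \complNum)$, so that $g \in L^1$ and $h \in \mathcal{M}(\posRealNum, \complNum)$ with $\|h\|_{\mathcal{M}} \leq \sum_k |b_k c_k^*|/|\realPart{\lambda_k}| + |D|$. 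Interchanging summation and integration (legitimate by the same absolute convergence) yields $\laplaceTr{h}(s) = \sum_k \frac{b_k c_k^*}{s - \lambda_k} + D$ for $\realPart{s} > 0$, so $\laplaceTr{h} = \mathbf{G}$ on $\complNum_0 \cap \complNum_{\omega(\mathbb{T})}$ and, by analytic continuation (cf.\ the remark after Proposition~\ref{prop:boundedVarImpliesBIBO}), on all of $\complNum_{\omega(\mathbb{T})}$. The two cited theorems then finish the proof.

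The main obstacle I anticipate is the rigorous justification of the partial-fraction form of $\mathbf{G}$. Because $B$ and $C$ are unbounded -- only $b, c \in X_{-1}$ and $C \in \boundedOp{X_1}{Y}$ -- the expression $C \resolvent{s}{A} B$ need not be individually meaningful, and one must instead work with the \emph{difference} of resolvents, which maps into $X_1$ where $C$ acts, and then justify the term-by-term evaluation on the biorthogonal system together with the convergence of the resulting series in $X_{-1}$. Once this representation is secured, the remainder is the $L^1$-estimate driven directly by \eqref{condRiesz} and an appeal to Theorems~\ref{thm:classicalBIBOBoundedVariation} and~\ref{thm:generalBIBOclassicalBIBO}.
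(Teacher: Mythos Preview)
Your proposal is correct and follows essentially the same route as the paper: derive the partial-fraction representation of $\mathbf{G}$, exhibit its inverse Laplace transform as an $L^1$-function plus a point mass via the estimate driven by \eqref{condRiesz}, and then invoke the measure-of-bounded-variation characterisation (Theorems~\ref{thm:classicalBIBOBoundedVariation} and~\ref{thm:generalBIBOclassicalBIBO}). The paper dispatches the obstacle you flag by citing the resolvent expansion for Riesz-spectral operators from \cite[Prop.~2.6.2]{b_TucsnakWeiss2009}, which makes the term-by-term computation of $C[\resolvent{\alpha}{A}-\resolvent{\beta}{A}]B$ legitimate.
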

\begin{proof}
As $\realPart{\lambda_n} < 0$ for all $n \in \natNum$,  we get for $s \in \overline{\complNum_0}$ that $\left| s - \lambda_n\right| \geq \left| \realPart{\lambda_n} \right| $
so that $ \left| \frac{b_n c_n^\ast}{s - \lambda_n} \right| \leq \left| \frac{b_n c_n^\ast}{\realPart{\lambda_n}} \right|$.
Since $\resolvent{s}{A} x = \sum_{n \in \natNum} \frac{\innerProd{x}{\psi_n} \phi_n}{s - \lambda_n}$ for $s \in \complNum_{\omega(\mathbb{T})}$, \cite[Prop. 2.6.2.]{b_TucsnakWeiss2009}, we find that the transfer function is given by 
\begin{equation}
\label{eq:RieszSpectral:GenTransferFunction}
    \mathbf{G}(s) = \alpha + \sum_{n \in \natNum} \frac{b_n c_n^\ast}{s - \lambda_n}.
\end{equation}
with $\alpha$ some constant. 
By the assumption and Fubini's theorem, 
\begin{equation}
\begin{split}
    \int_0^\infty \left| \sum_{n \in \natNum} b_n c_n^\ast e^{\lambda_n t} \right| \intd{t} &\leq \int_0^\infty  \sum_{n \in \natNum} \left| b_n c_n^\ast e^{\lambda_n t} \right|  \intd{t}
    = \int_0^\infty  \sum_{n \in \natNum} \left| b_n c_n^\ast \right| e^{\realPart{\lambda_n} t}   \intd{t} \\
    &\leq  \sum_{n \in \natNum} \int_0^\infty \left| b_n c_n^\ast \right| e^{\realPart{\lambda_n} t}   \intd{t} =  \sum_{n \in \natNum} \frac{\left| b_n c_n^\ast \right|}{| \realPart{\lambda_n} |} < \infty.
\end{split}
\end{equation}
Thus, by continuity of the Laplace transform, $\mathbf{G}=\laplaceTr{h}$ for
\begin{equation}
    h(t) = \alpha \, \delta(t) + \sum_{n \in \natNum} b_n c_n^\ast e^{\lambda_n t}.
\end{equation}
Since $h \in \mathcal{M}(\posRealNum, \complNum)$,  $\Sigma(A,B,C,\mathbf{G})$ is $L^\infty$-BIBO stable by Proposition~\ref{thm:classicalBIBOBoundedVariation}.
\end{proof}

\begin{remark}
\label{rem:RiesSufficientExtension}
\begin{enumerate}
    \item 

Proposition~\ref{prop:RieszSpectralSufficient} easily extends to the case of finitely many $\lambda_k$'s with $\realPart{\lambda_k} \ge 0$. The sufficient condition in this case becomes
\begin{equation}
    \sum_{\substack{k \in \natNum \\ \realPart{\lambda_k} < 0}} \left| \frac{b_k c^\ast_k}{\realPart{\lambda_k}} \right| < \infty \quad \andMath \quad \left(\realPart{\lambda_k} \ge 0 \Rightarrow b_k c^\ast_k = 0\right).
\end{equation}

\item It is easy to see that the condition in the proposition is not necessary. Indeed, let $\Sigma\left( \widetilde{A}, \widetilde{B}, \widetilde{C}, \widetilde{\mathbf{G}} \right)$ be the system defined by $(\widetilde{\lambda}_k) = (-1, -2, -3, \ldots )$, $(\widetilde{b}_k) = (1, 1, \ldots )$ and $(\widetilde{c}_k) = (1, 1, \ldots )$ and $\widetilde{\mathbf{G}}$ determined up to an additve constant. Then \eqref{condRiesz} is not satisfied. Now consider the ``stacked system'' $\Sigma\left( A, B, C, \mathbf{G} \right) = \left( \left[ \begin{smallmatrix}
    \widetilde{A} & 0 \\ 0 & \widetilde{A}
\end{smallmatrix} \right], \left[ \begin{smallmatrix}
    \widetilde{B} \\ \widetilde{B}
\end{smallmatrix} \right], \left[ \begin{smallmatrix}
    \widetilde{C} & - \widetilde{C}
\end{smallmatrix} \right], 0 \right)$, which is still Riesz-spectral and does not satisfy \eqref{condRiesz} either by construction. However, $\Sigma\left( A, B, C, \mathbf{G} \right)$ is $L^\infty$-BIBO stable as its transfer function is 0. For more detailed calculations we refer the reader to the proof of Theorem~\ref{thm:MultPertResult} below.

\item Even for Riesz-spectral systems, $L^\infty$-well-posedness is  strictly stronger than $L^\infty$-BIBO stability and we can charaterize this stronger property in the case that  $A$  generates an analytic, exponentially stable semigroup.
Then a Riesz-spectral system $\Sigma(A,B,C,\mathbf{G})$ is $L^{\infty}$-well-posed in the sense of \cite{b_Staffans2005} if and only if $C$ is bounded, i.e., $(c_n)_{n \in \natNum} \in \ell^2\left(\natNum, \complNum\right)$. The sufficiency holds by Lemma \ref{lem:ABIsystemBIBO} below and since $B$ is automatically infinite-time $L^{\infty}$-admissible, see \cite{a_JacobEtAlISS2018}. The necessity holds for general system nodes, \cite[Thm.~4.4.2]{b_Staffans2005}.  
\end{enumerate}
\end{remark}

\subsection{Integrability of $C \semiGroup{\cdot} B$}
In the case that the infinite-dimensional analogue of the impulse response $h(t) := C e^{t A} B$ of a finite-dimensional state-space system is well-defined, its integrability provides a sufficient condition for BIBO stability.

\begin{proposition}
\label{prop:CTBinL1BIBO}
Let $\Sigma(A,B,C,\mathbf{G})$ be a system node. Suppose  that there exists a Banach space $Z$ continuoulsy embedded in $X_{-1}$ 
such that 
\begin{itemize}
    \item $C$ has a continuous extension $C_{Z}$ to $Z$ 
    \item $\semiGroup{t}B\in Z$ for almost all $t>0$, and
    \item $C_{Z}T(\cdot)B\in \lpSpaceDT{1}{\posRealNum}{\boundedOp{U}{Y}}.$
\end{itemize}
Then $\Sigma(A,B,C,\mathbf{G})$ and $\Sigma(A^*,C^*,B^*,\mathbf{G}^*)$ are both $L^\infty$-BIBO stable.

\end{proposition}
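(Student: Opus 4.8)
The plan is to produce the operator-valued impulse response $h(t) := C_{Z}\semiGroup{t}B$, which by the third hypothesis lies in $\lpSpaceDT{1}{\posRealNum}{\boundedOp{U}{Y}}$, to show that $\mathbf{G}$ coincides with $\laplaceTr{h}$ up to a bounded constant operator, and then to read off the BIBO estimate from the resulting convolution representation of the output. Since $U$ and $Y$ are not assumed finite-dimensional here, the characterisation of Theorem~\ref{thm:classicalBIBOBoundedVariation} is unavailable and the convolution bound must be obtained directly; this is harmless, because Young's inequality $\| h \ast u \|_{\lpSpaceDT{\infty}{[0,T]}{Y}} \leq \| h \|_{\lpSpaceDT{1}{\posRealNum}{\boundedOp{U}{Y}}}\, \| u \|_{\lpSpaceDT{\infty}{[0,T]}{U}}$ holds for operator-valued $L^{1}$-kernels and $U$-valued $L^{\infty}$-inputs exactly as in the scalar case.

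First I would identify $\mathbf{G}$ with $\laplaceTr{h}$ up to a constant. As $h \in \lpSpaceDT{1}{\posRealNum}{\boundedOp{U}{Y}}$, the transform $\laplaceTr{h}$ exists and is analytic on $\complNum_{0}$, with $\der{}{s}\laplaceTr{h}(s) = - \int_{0}^{\infty} t\, \mathrm{e}^{-st} C_{Z}\semiGroup{t}B \intd{t}$. On the other hand, differentiating the defining relation~\eqref{eq:transferFunctionDifferenceRelation} gives $\der{}{s}\mathbf{G}(s) = - C \left( s\mathbb{I} - A \right)^{-2} B$, which is well defined because the double resolvent $\left( s\mathbb{I} - A \right)^{-2} B$ maps $U$ into $X_{1}$, where $C$ acts and agrees with $C_{Z}$. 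Using the Bochner-integral representation $\left( s\mathbb{I} - A \right)^{-2} B u = \int_{0}^{\infty} t\, \mathrm{e}^{-st}\semiGroup{t}Bu \intd{t}$, equality of the two derivatives reduces to the interchange
\[
C_{Z} \int_{0}^{\infty} t\, \mathrm{e}^{-st}\semiGroup{t}Bu \intd{t} = \int_{0}^{\infty} t\, \mathrm{e}^{-st} C_{Z}\semiGroup{t}Bu \intd{t}.
\]
Once this is established, $\der{}{s}(\mathbf{G} - \laplaceTr{h}) = 0$ on the connected open set $\complNum_{0} \cap \complNum_{\omega(\mathbb{T})}$ forces $\mathbf{G} - \laplaceTr{h}$ to be a constant $D_{0}$ there, which lies in $\boundedOp{U}{Y}$ by Remark~\ref{rem:transferFunctionDifferConstant}; analytic continuation then yields $\mathbf{G} = D_{0} + \laplaceTr{h}$ on all of $\complNum_{\omega(\mathbb{T})}$.

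The interchange displayed above is the main obstacle. The delicate point is that only the $Y$-valued map $t \mapsto C_{Z}\semiGroup{t}B$ is assumed integrable, whereas the $Z$-valued orbit $t \mapsto \semiGroup{t}Bu$ carries no a priori bound in the norm of $Z$, so $C_{Z}$ cannot be pulled through the integral by a plain Bochner-integral argument. The route I would take is to truncate: the integrals $\int_{\varepsilon}^{n} t\, \mathrm{e}^{-st} C_{Z}\semiGroup{t}Bu \intd{t}$ converge in $Y$ by dominated convergence with the integrable majorant $t \mapsto t\, \mathrm{e}^{-\realPart{s} t}\| C_{Z}\semiGroup{t}B \|$, while the corresponding integrals of $\semiGroup{t}Bu$ converge to $\left( s\mathbb{I} - A \right)^{-2} Bu \in X_{1} \subseteq Z$. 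Identifying the two limits through the continuity of $C_{Z}$ on $Z$ — the content of the interchange — requires exploiting that the limit lies in $X_{1} \subseteq Z$ together with the measurability of the $Z$-valued orbit, and this is the technical heart of the argument.

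With $\mathbf{G} = D_{0} + \laplaceTr{h}$ in hand, the BIBO estimate follows along the lines of the proof of Theorem~\ref{thm:generalBIBOclassicalBIBO}. For $u \in C_{c}^{\infty}((0,\infty),U)$, Lemma~\ref{lem:TransferFunctionMultiRepCompact} gives $\hat{y} = \mathbf{G}\cdot\hat{u} = \widehat{D_{0}u} + \laplaceTr{h}\cdot\hat{u}$, so by uniqueness of the vector-valued Laplace transform the classical output is $y = D_{0}u + h \ast u$; the cut-off construction of Proposition~\ref{thm:BIBOInequalityAllClassSols} and the doubly-integrated-input argument of Proposition~\ref{prop:BIBOIneqForDistrSol} carry over verbatim — they use finite-dimensionality only through this convolution representation — and extend it to every generalised solution with $u \in L^{\infty}_{\loc}(\posRealNum, U)$, whence Young's inequality gives $\| y \|_{\lpSpaceDT{\infty}{[0,T]}{Y}} \leq \big( \| D_{0} \| + \| h \|_{\lpSpaceDT{1}{\posRealNum}{\boundedOp{U}{Y}}} \big) \| u \|_{\lpSpaceDT{\infty}{[0,T]}{U}}$, i.e.\ $\Sigma(A,B,C,\mathbf{G})$ is $L^{\infty}$-BIBO stable. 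For the dual node $\Sigma(A^{*},C^{*},B^{*},\mathbf{G}^{*})$ the identity is inherited for free: taking adjoints gives $\mathbf{G}^{*}(s) = \mathbf{G}(\bar{s})^{*} = D_{0}^{*} + \laplaceTr{h^{*}}(s)$ with dual impulse response $h^{*}(t) := h(t)^{*} = B^{*}\semiGroup{t}^{*}C_{Z}^{*}$ lying in $\lpSpaceDT{1}{\posRealNum}{\boundedOp{Y}{U}}$ because $\| h(t)^{*} \| = \| h(t) \|$, so the same convolution-plus-Young argument yields its $L^{\infty}$-BIBO stability as well.
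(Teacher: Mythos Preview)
Your proposal is correct and follows essentially the same line as the paper: construct the $L^{1}$-impulse response $h=C_{Z}\semiGroup{\cdot}B$, show that $\mathbf{G}$ and $\laplaceTr{h}$ differ by a constant in $\boundedOp{U}{Y}$, then mimic the cut-off and doubly-integrated-input arguments of Propositions~\ref{thm:BIBOInequalityAllClassSols}--\ref{prop:BIBOIneqForDistrSol} with operator-valued $L^{1}$-kernels in place of matrix-valued measures, and finally treat the dual node via $h^{*}(t)=h(t)^{*}$. The one structural difference is in how the constant-offset identity is established: you differentiate, comparing $\der{}{s}\mathbf{G}(s)=-C(s\mathbb{I}-A)^{-2}B$ with $\der{}{s}\laplaceTr{h}(s)$, whereas the paper works with differences, using the resolvent identity $\resolvent{s}{A}\resolvent{r}{A}x=\frac{1}{r-s}\int_{0}^{\infty}\semiGroup{\sigma}(e^{-s\sigma}-e^{-r\sigma})x\intd{\sigma}$ to obtain $(r-s)\,C\resolvent{s}{A}\resolvent{r}{A}Bu=\laplaceTr{h}(s)u-\laplaceTr{h}(r)u$ and matching this directly against the defining relation~\eqref{eq:transferFunctionDifferenceRelation}. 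The paper's route is marginally more direct in that it avoids differentiating $\mathbf{G}$, but both approaches hinge on the same interchange of $C_{Z}$ with a Bochner integral --- a point you flag explicitly as the technical heart of the argument and which the paper passes over without comment.
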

\begin{proof}
If $C_{Z} \semiGroup{\cdot} B \in \lpSpaceDT{1}{\posRealNum}{\boundedOp{U}{Y}}$, then in particular its Laplace transform
\begin{equation}
    \mathbf{H}(s) = \int_0^\infty C_{Z} \semiGroup{t} B e^{s t} \intd{t}, \qquad s \in \complNum_\alpha
\end{equation}
exists on some right half-plane $\complNum_\alpha$.
At the same time we have for any $x \in X_{-1}$ that
\begin{equation}
    \resolvent{s}{A} \resolvent{r}{A} x = \frac{1}{r - s} \int_0^\infty \semiGroup{\sigma} \left( e^{-s \sigma} - e^{-r \sigma} \right) x \intd{\sigma},
\end{equation}
and thus for any $u \in U$ that $(r - s) C \resolvent{s}{A} \resolvent{r}{A} B u 
    = \left( \mathbf{H}(s) - \mathbf{H}(r) \right) u$.
But this implies that $\mathbf{H} = \mathbf{G} + c$ for some $c \in \complNum$ so that we find
\begin{equation}
    \laplaceInvTr{\mathbf{G}} = \laplaceInvTr{\mathbf{H}} - c \delta(\cdot) = C_{Z} \semiGroup{\cdot} B - c \delta (\cdot),
\end{equation}
which is a measure of bounded variation by the assumption. But then we can argue analogously to the proof of Theorem~\ref{thm:generalBIBOclassicalBIBO} to show that the system node is $L^\infty$-BIBO stable. More precisely, the properties of convolutions and Laplace transforms of matrix-valued measures employed therein have to be replaced by the corresponding results for general operator-valued $\lpSpaceDT{1}{\posRealNum}{\boundedOp{U}{Y}}$ functions (see e.g.\ \cite[Lem.~D.1.11]{th_Mikkola2002}).

The second statement follows from the observation that clearly also $B^* \semiGroup{\cdot}^* C^* \in \lpSpaceDT{1}{\posRealNum}{\boundedOp{Y^*}{U^*}}$ if $C \semiGroup{\cdot} B \in \lpSpaceDT{1}{\posRealNum}{\boundedOp{U}{Y}}$. 
\end{proof}

For analytic semigroups also the converse of this proposition holds.
\begin{proposition}
    Let $\Sigma(A,B,C,\mathbf{G})$ be a $C^\infty$-BIBO stable system node with $A$ the generator of an analytic semigroup and finite-dimensional input and output spaces $U$ and $Y$. Then the function $t \mapsto C \semiGroup{t} B$ is in $\lpSpaceDT{1}{\posRealNum}{\boundedOp{U}{Y}}$.
\end{proposition}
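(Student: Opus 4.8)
The plan is to reduce, via Theorem~\ref{thm:classicalBIBOBoundedVariation}, to a statement about a representing measure and then to identify its density on $(0,\infty)$ with $t\mapsto C\semiGroup{t}B$. First I would apply Theorem~\ref{thm:classicalBIBOBoundedVariation} to obtain a measure $h\in\mathcal{M}(\posRealNum,\complNum^{d_Y\times d_U})$ with $\laplaceTr{h}=\mathbf{G}$ on $\complNum_{\omega(\mathbb{T})}$; in particular $\mathbf{G}$ is the Laplace transform of a finite measure. Next I would record the consequences of analyticity: for every $t>0$ the extended semigroup maps $X_{-1}$ into $X_1$, so $\semiGroup{t}B\in\boundedOp{U}{X_1}$ and $g(t):=C\semiGroup{t}B$ is a well-defined, continuous $\boundedOp{U}{Y}$-valued function on $(0,\infty)$. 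The relevant short-time smoothing estimate $\|\semiGroup{t}\|_{\boundedOp{X_{-1}}{X_1}}\lesssim t^{-2}$ as $t\downarrow0$ gives $\|g(t)\|\lesssim t^{-2}$ near the origin, while at infinity $g$ grows at most exponentially.

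The obstruction this bound creates is precisely the difficulty of the problem: the $t^{-2}$ singularity means $\int_0^1 e^{-\realPart{s}t}\|g(t)\|\intd{t}$ need not converge, so one cannot directly compare the Laplace transform of $g$ with that of $h$. The key idea is to tame the singularity by differentiating the transfer function twice. Differentiating the defining relation~\eqref{eq:transferFunctionDifferenceRelation} in the spectral variable yields $\mathbf{G}'(s)=-C(s\mathbb{I}-A)^{-2}B$ and $\mathbf{G}''(s)=2C(s\mathbb{I}-A)^{-3}B$, both well-defined since two resolvents already map $X_{-1}$ into $X_1$. Using $2(s\mathbb{I}-A)^{-3}=\int_0^\infty e^{-st}t^2\semiGroup{t}\intd{t}$ together with the estimate above, which makes $t^2 g(t)$ locally bounded near $0$, the integral $\int_0^\infty e^{-st}t^2\semiGroup{t}B\intd{t}$ converges in $X_1$ for $\realPart{s}$ large, so I would pull the bounded operator $C$ inside to obtain $\mathbf{G}''(s)=\int_0^\infty e^{-st}t^2 g(t)\intd{t}=\laplaceTr{t^2 g}(s)$. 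On the other hand, differentiating $\mathbf{G}=\laplaceTr{h}$ twice under the integral sign (justified for $\realPart{s}$ large, since $|h|$ is finite) gives $\mathbf{G}''(s)=\int_{\posRealNum}e^{-st}t^2\intd{h}(t)$.

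Finally I would invoke injectivity of the Laplace transform on measures possessing a transform on a common right half-plane to conclude that $t^2 g(t)\intd{t}$ and $t^2\intd{h}(t)$ coincide as measures on $\posRealNum$. Since $t^2>0$ on $(0,\infty)$, dividing out the weight forces $g(t)\intd{t}=\intd{h}(t)$ on $(0,\infty)$, i.e.\ the restriction of $h$ to $(0,\infty)$ is absolutely continuous with density $C\semiGroup{\cdot}B$. Consequently $\int_{(0,\infty)}\|g(t)\|\intd{t}=|h|\bigl((0,\infty)\bigr)\le\|h\|_{\mathcal{M}}<\infty$, and as the single point $0$ is Lebesgue-null this yields $C\semiGroup{\cdot}B\in\lpSpaceDT{1}{\posRealNum}{\boundedOp{U}{Y}}$. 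The main obstacle is the behaviour at the endpoint $t=0$, and it is exactly here that analyticity is indispensable: it both defines $g$ pointwise and supplies the $t^{-2}$ bound that makes the ``multiply by $t^2$, match Laplace transforms, divide by $t^2$'' scheme rigorous. I would expect the only delicate technical points to be the interchange of $C$ with the vector-valued integral and the precise form of the uniqueness theorem for Laplace transforms of measures, both of which are routine given the convergence estimates above.
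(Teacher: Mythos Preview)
Your argument is correct, but it proceeds along a genuinely different route from the paper. The paper does \emph{not} invoke Theorem~\ref{thm:classicalBIBOBoundedVariation} at all; instead it works directly with the $C^\infty$-BIBO inequality. Reducing to $U=Y=\complNum$, it fixes $t>\epsilon>0$, takes $u\in C_c^\infty((\epsilon,t-\epsilon))$, observes that for such inputs the output at time $t$ is exactly $y(t)=\int_{\epsilon}^{t-\epsilon}\innerProd{\semiGroup{s}b}{c}\,u(t-s)\intd{s}$ (the feedthrough term vanishes since $u(t)=0$), and then reads the BIBO bound $|y(t)|\le k\|u\|_\infty$ as an $L^1$--$L^\infty$ duality estimate to conclude $\|\innerProd{\semiGroup{\cdot}b}{c}\|_{L^1[\epsilon,t-\epsilon]}\le k$ uniformly in $t$ and $\epsilon$; letting $\epsilon\downarrow0$ and $t\uparrow\infty$ finishes. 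Your approach instead passes through the representing measure $h$ and matches the Laplace transforms of $t^2 g(t)\intd{t}$ and $t^2\intd{h}(t)$, which identifies $h$ on $(0,\infty)$ with $C\semiGroup{\cdot}B$ and hence bounds $\|g\|_{L^1}$ by $\|h\|_{\mathcal M}$. The paper's argument is more elementary and self-contained (it avoids the Stein--Weiss machinery hidden in Theorem~\ref{thm:classicalBIBOBoundedVariation}); your argument, on the other hand, yields the sharper structural statement that the impulse response measure $h$ decomposes as $g(t)\intd{t}$ plus a possible atom at $0$, which nicely complements Proposition~\ref{prop:CTBinL1BIBO}.
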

\begin{proof}
    It suffices to show this for $U = Y = \complNum$. Then we can identify $B$ and $C$ with elements $b \in X_{-1}$ and $c \in (X_1)^\ast$, such that $B u = b \, u$ for any $u \in U$ and $C x = \innerProd{x}{c}_{X_1,(X_1)^\ast}$ for any $x \in X_1$.

    For fixed $t > \frac{\epsilon}{2} > 0$ and any $u \in C^\infty_c\left( \epsilon, t - \epsilon \right)$ and extended with 0 to $\posRealNum$, by Lemma~\ref{lem:existenceClassicalSolutions}, there exists a classical solution $(u,x,y)$ with $x(0) = 0$. Furthermore, by the $C^\infty$-BIBO stability there is $k>0$ such that for all such solutions we have $\| y \|_{\lpSpaceDT{\infty}{\posRealNum}{\complNum}} \leq k \| u \|_{\lpSpaceDT{\infty}{\posRealNum}{\complNum}}$. Then we have
    \begin{align}
        | y(t) | &= \left| C \left( \int_0^t \semiGroup{t-s} B u(s) \intd{s} - \resolvent{\alpha}{A} B u(t) \right) + \mathbf{G}(\alpha) u(t) \right| \\
        &= \left| \int_\epsilon^{t-\epsilon}  \innerProd{\semiGroup{s} b}{c} u(t-s) \intd{s} \right| \leq k \| u \|_{\lpSpaceDT{\infty}{\posRealNum}{\complNum}}.
    \end{align}
    Now, as $\innerProd{\semiGroup{\cdot} b}{c}$ is in particular continuous on $[\epsilon, t-\epsilon]$, it is also in $\lpSpaceDT{1}{[\epsilon, t-\epsilon]}{\complNum}$. By an adapted version of \cite[Lem.~9.17]{b_HaaseFunctionalAnalysis2014} we then conclude that $\|\innerProd{\semiGroup{\cdot} b}{c}\|_{\lpSpaceDT{1}{[\epsilon, t-\epsilon]}{\complNum}} \leq k$ independent of $t$ and $\epsilon$. But this then implies first that $\|\innerProd{\semiGroup{\cdot} b}{c}\|_{\lpSpaceDT{1}{[0, t]}{\complNum}} \leq k$ for all $t > 0$ and thus also $\|\innerProd{\semiGroup{\cdot} b}{c}\|_{\lpSpaceDT{1}{\posRealNum}{\complNum}} \leq k$.
\end{proof}

\subsection{A related result for more general systems}
Using Propsition~\ref{prop:CTBinL1BIBO}, one finds that there are sufficient conditions for BIBO stability of systems that may not be of Riesz-spectral form, which are closely related to the one from Proposition~\ref{prop:RieszSpectralSufficient}.

\begin{theorem}
\label{thm:sufficientResultSmaller1}
Let $\Sigma(A,B,C,\mathbf{G})$ be a system node where $A$ generates an exponentially stable and analytic semigroup $\semiGroupDef$, $B \in \boundedOp{U}{X_{-\alpha}}$ and $C \in \boundedOp{X_{\beta}}{Y}$ with $\alpha + \beta < 1$. Then $\Sigma(A,B,C,\mathbf{G})$ is $L^\infty$-BIBO stable.
\end{theorem}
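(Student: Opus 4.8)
The plan is to reduce the statement to Proposition~\ref{prop:CTBinL1BIBO} by choosing the intermediate space $Z = X_\beta$. Since $-1 < \beta < 1$, the space $X_\beta$ embeds continuously into $X_{-1}$, and $C$ is by hypothesis already bounded on $X_\beta$, so taking $C_Z = C$ settles the first requirement of that proposition. It then remains only to verify that $\semiGroup{t} B u \in X_\beta$ for almost every $t > 0$ and that the impulse response $t \mapsto C \semiGroup{t} B$ lies in $\lpSpaceDT{1}{\posRealNum}{\boundedOp{U}{Y}}$. Both will follow at once from a single smoothing estimate.

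The heart of the argument is the bound
\begin{equation}
    \|\semiGroup{t}\|_{\boundedOp{X_{-\alpha}}{X_\beta}} \lesssim t^{-(\alpha+\beta)} e^{-\epsilon t}, \qquad t > 0,
\end{equation}
for some $\epsilon > 0$, which I would obtain from the standard theory of analytic semigroups. Because the semigroup is analytic and exponentially stable, $0 \in \rho(A)$ and $-A$ is invertible and sectorial of angle strictly less than $\pi/2$, so the fractional powers $(-A)^\gamma$ are well-defined and satisfy $\|(-A)^\gamma \semiGroup{t}\|_{\boundedOp{X}{X}} \lesssim t^{-\gamma} e^{-\epsilon t}$ for $t > 0$ and $\gamma \geq 0$, with $\epsilon$ any number in $(0, -\omega(\mathbb{T}))$. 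Using that $(-A)^{-\alpha}$ is an isometric isomorphism from $X_{-\alpha}$ onto $X$ and that $(-A)^\beta \semiGroup{t} (-A)^\alpha = (-A)^{\alpha+\beta}\semiGroup{t}$ on the relevant domains, one computes for $x \in X_{-\alpha}$
\begin{equation}
    \|\semiGroup{t} x\|_{X_\beta} = \|(-A)^\beta \semiGroup{t} x\|_X \leq \|(-A)^{\alpha+\beta}\semiGroup{t}\|_{\boundedOp{X}{X}} \, \|x\|_{X_{-\alpha}} \lesssim t^{-(\alpha+\beta)} e^{-\epsilon t}\|x\|_{X_{-\alpha}},
\end{equation}
where in the degenerate case $\alpha + \beta \leq 0$ the power $(-A)^{\alpha+\beta}$ is bounded and the singularity at the origin simply disappears. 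Composing with the bounded operators $B \in \boundedOp{U}{X_{-\alpha}}$ and $C \in \boundedOp{X_\beta}{Y}$ then gives $\semiGroup{t} B u \in X_\beta$ for every $t > 0$ and $\|C \semiGroup{t} B\|_{\boundedOp{U}{Y}} \lesssim t^{-(\alpha+\beta)} e^{-\epsilon t}$.

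Integrability is then immediate: the hypothesis $\alpha + \beta < 1$ renders the singularity $t^{-(\alpha+\beta)}$ integrable at the origin, while the factor $e^{-\epsilon t}$ guarantees integrability at infinity, and continuity of $t \mapsto C\semiGroup{t}B$ on $(0,\infty)$ (inherited from analyticity of the semigroup) provides the required measurability. Hence $C\semiGroup{\cdot}B \in \lpSpaceDT{1}{\posRealNum}{\boundedOp{U}{Y}}$, and Proposition~\ref{prop:CTBinL1BIBO} delivers $L^\infty$-BIBO stability. I expect the only genuinely delicate point to be the bookkeeping of fractional powers across the extrapolation scale, namely justifying rigorously the identity $(-A)^\beta\semiGroup{t}(-A)^\alpha = (-A)^{\alpha+\beta}\semiGroup{t}$ and the isometry $(-A)^{-\alpha}\colon X_{-\alpha}\to X$ on the full scale $X_\gamma$, $-1 < \gamma < 1$, together with the separate treatment of $\alpha+\beta \leq 0$; the estimate $\|(-A)^\gamma\semiGroup{t}\|\lesssim t^{-\gamma}e^{-\epsilon t}$ itself is entirely standard once exponential stability is used to place the spectrum of $-A$ strictly inside a sector of the open right half-plane.
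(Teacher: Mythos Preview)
Your proposal is correct and follows essentially the same route as the paper: both reduce to Proposition~\ref{prop:CTBinL1BIBO} by factoring through fractional powers of $-A$ to obtain $\|C\semiGroup{t}B\|_{\boundedOp{U}{Y}}\lesssim t^{-(\alpha+\beta)}e^{-\epsilon t}$ and then invoke integrability of $t^{-(\alpha+\beta)}e^{-\epsilon t}$ on $\posRealNum$. The only cosmetic difference is that you name the intermediate space $Z=X_\beta$ explicitly, whereas the paper writes the same computation via the auxiliary operators $\widetilde{B}=(-A)^{-\alpha}B$ and $\widetilde{C}=C(-A)^{-\beta}$.
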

\begin{proof}
    As $B \in \boundedOp{U}{X_{-\alpha}}$ we have $\widetilde{B} := (-A)^{-\alpha} B \in \boundedOp{U}{X}$ and as $C \in \boundedOp{X_{\beta}}{\complNum}$ we have $\widetilde{C} := C (-A)^{-\beta} \in \boundedOp{X}{Y}$. Furthermore, due to the analyticity of the semigroup, $C \semiGroup{t} B$ is well-defined for all $t>0$.
    Then we have
    \begin{align}
        \| C \semiGroup{t} B \|_{\boundedOp{U}{Y}} &= \| \widetilde{C} (-A)^{\alpha + \beta}  \semiGroup{t - s} \widetilde{B} \|_{\boundedOp{U}{Y}} \\ &\leq \| \widetilde{C} \|_{\boundedOp{X}{Y}} \|\widetilde{B} \|_{\boundedOp{U}{X}} \| (-A)^{\alpha + \beta}  \semiGroup{t - s}  \|_{\boundedOpSelf{X}},
    \end{align}
    and thus by the estimate from \cite[Thm.~2.6.13]{b_Pazy1983} find that for all $t > 0$ we have
    \begin{equation}
        \int_0^t \| C \semiGroup{s} B \|_{\boundedOp{U}{Y}} \intd{s} \lesssim \int_0^\infty s^{-\alpha-\beta} e^{-\delta s} \intd{s},
    \end{equation}
    with constants independent of $t$. 
    Hence $C \semiGroup{\cdot} B \in \lpSpaceDT{1}{\posRealNum}{\boundedOp{U}{Y}}$. Then, by Proposition~\ref{prop:CTBinL1BIBO}, the system is $L^\infty$-BIBO stable.
\end{proof}

\begin{theorem}
\label{thm:sufficientResultEqual1}
Let $\Sigma(A,B,C,\mathbf{G})$ be a system node on a Hilbert space $X$, where $A$ generates an exponentially stable and analytic semigroup $\semiGroupDef$ that is similar to a contraction semigroup, $B \in \boundedOp{U}{X_{-\alpha}}$ and $C \in \boundedOp{X_{\beta}}{Y}$ with $\alpha + \beta = 1$ and finite-dimensional $U$ and $Y$. Then $\Sigma(A,B,C,\mathbf{G})$ is $L^\infty$-BIBO stable.
\end{theorem}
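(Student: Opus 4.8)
The plan is to reduce, exactly as in the proof of Theorem~\ref{thm:sufficientResultSmaller1}, to verifying the hypotheses of Proposition~\ref{prop:CTBinL1BIBO} with $Z = X_\beta$, i.e.\ to showing that the impulse response $C\semiGroup{\cdot}B$ lies in $\lpSpaceDT{1}{\posRealNum}{\boundedOp{U}{Y}}$. The obstruction is that at the borderline exponent $\alpha+\beta=1$ the pointwise bound $\|C\semiGroup{t}B\|\lesssim t^{-1}e^{-\delta t}$ used there is no longer integrable near $0$, so I would have to extract cancellation rather than rely on the operator norm. Since $U$ and $Y$ are finite-dimensional, I would first fix bases and reduce to finitely many scalar impulse responses, so that it suffices to treat $U=Y=\complNum$, identifying $B$ with $b\in X_{-\alpha}$ and representing the bounded functional $\widetilde{C}:=C(-A)^{-\beta}\in\boundedOp{X}{\complNum}$ as $\innerProd{\cdot}{\widetilde c}$ for some $\widetilde c\in X$; I would also set $\widetilde b := (-A)^{-\alpha}b\in X$.

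The central step is a symmetric factorisation of the impulse response. Using $\semiGroup{t}=\semiGroup{t/2}\semiGroup{t/2}$, the fact that the fractional powers commute with the (analytic) semigroup, and $\alpha+\beta=1$, I would write
\begin{equation}
C\semiGroup{t}B = \innerProd{(-A)^{\alpha}\semiGroup{t/2}\widetilde b}{\semiGroup{t/2}^{\ast}(-A^{\ast})^{\beta}\widetilde c}.
\end{equation}
Because $\alpha+\beta=1$, the trivial factor $1=s^{\alpha-1/2}\,s^{\beta-1/2}$ (with $s=t/2$) splits evenly across the two slots, so applying Cauchy--Schwarz first in $X$ and then in $L^{2}(\posRealNum,\intd{s})$ yields
\begin{equation}
\int_{0}^{\infty}\left|C\semiGroup{t}B\right|\intd{t} \lesssim \left(\int_{0}^{\infty}s^{2\alpha-1}\|(-A)^{\alpha}\semiGroup{s}\widetilde b\|^{2}\intd{s}\right)^{1/2}\left(\int_{0}^{\infty}s^{2\beta-1}\|(-A^{\ast})^{\beta}\semiGroup{s}^{\ast}\widetilde c\|^{2}\intd{s}\right)^{1/2}.
\end{equation}

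It then remains to bound the two right-hand integrals, and this is where the Hilbert-space and similarity hypotheses enter. Each integral is exactly the square function $\int_0^\infty\|\psi(s(-A))x\|^2\,\intd{s}/s$ for the choice $\psi(z)=z^{\gamma}e^{-z}$ (with $\gamma=\alpha$, respectively $\gamma=\beta$ for the adjoint), and this $\psi$ lies in $H^\infty_0$ of any sector of angle strictly less than $\pi/2$. Since $A$ generates an exponentially stable analytic semigroup on the Hilbert space $X$ that is similar to a contraction semigroup, $-A$ has a bounded $H^\infty$-functional calculus on a sector of angle $<\pi/2$, and the same holds for $-A^{\ast}$, the adjoint semigroup being again analytic and similar to a contraction. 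I would then invoke the McIntosh square function estimate — only its upper bound is needed here — to conclude $\int_0^\infty s^{2\gamma-1}\|(-A)^\gamma\semiGroup{s}x\|^2\intd{s}\lesssim\|x\|^2$ for every $\gamma>0$. This bounds the two integrals by $\|\widetilde b\|^2$ and $\|\widetilde c\|^2$, giving $C\semiGroup{\cdot}B\in\lpSpaceDT{1}{\posRealNum}{\boundedOp{U}{Y}}$, whence Proposition~\ref{prop:CTBinL1BIBO} yields $L^\infty$-BIBO stability.

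The main obstacle is precisely the passage through the square function estimates at the non-central exponents $\alpha,\beta\neq 1/2$: the elementary energy identity only handles $\gamma=1/2$, so the argument genuinely needs the bounded $H^\infty$-calculus supplied by the similarity-to-contraction assumption, and the role of $\alpha+\beta=1$ is exactly to make the two weight exponents $2\alpha-1$ and $2\beta-1$ combine into the scale-invariant measure $\intd{s}/s$ required by that calculus. A secondary point to check carefully is the well-definedness of all fractional powers and of $C\semiGroup{t}B$ itself, which is where exponential stability ($0\in\rho(A)$) and analyticity of the semigroup are used.
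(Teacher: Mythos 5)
Your proposal is correct and follows essentially the same route as the paper: reduce to $U=Y=\complNum$, write $C\semiGroup{t}B=\innerProd{(-A)\semiGroup{t}\widetilde b}{\widetilde c}$, establish $\int_0^\infty|C\semiGroup{t}B|\intd{t}\lesssim\|\widetilde b\|\,\|\widetilde c\|$ via the weak square function estimate afforded by the bounded $H^\infty$-calculus coming from similarity to a contraction, and conclude with Proposition~\ref{prop:CTBinL1BIBO}. The only difference is that where the paper directly cites the weak square function estimates of Cowling--Doust--McIntosh--Yagi (via \cite[Thm.~1 \& Lem.~12]{a_JacobSchwenningerZwartParabolicISS2019}), you re-derive them by the standard Cauchy--Schwarz splitting into two strong McIntosh square functions with $\psi(z)=z^{\gamma}e^{-z}$, $\gamma=\alpha,\beta\in(0,1)$.
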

\begin{proof}
    It suffices to consider $U = Y = \complNum$. By the assumptions on $B$ and $C$ there exist $b,c \in X$ such that $B u = u \cdot (-A)^{\alpha} b$ and $C x = \innerProd{x}{(-A^*)^{\beta} c}$. Thus, using weak square function estimates \cite{a_CowlingEtAl96FunctionalCalculus}, by \cite[Thm.~1~\&~Lem.~12]{a_JacobSchwenningerZwartParabolicISS2019} we find that 
\begin{equation}
    \int_0^\infty  \left| C \semiGroup{s} B \right| \intd{s}  =  \int_0^\infty  \left| \innerProd{A \semiGroup{s} b}{ c} \right| \intd{s}  < \infty.
\end{equation}
Then again, by Proposition~\ref{prop:CTBinL1BIBO}, the system is $L^\infty$-BIBO stable.
\end{proof}

\section{BIBO stability under multiplicative perturbations}
\label{sec:MultPerturbationsChapter}
As a second question we consider the conservation of BIBO stability under multiplicative perturbations  $\Sigma(AP,B,C,\widetilde{\mathbf{G}})$ of the system node $\Sigma(A,B,C,\mathbf{G})$, where $P:X \rightarrow X$ is a bounded operator and $\widetilde{\mathbf{G}}$ a transfer function solving Equation~\eqref{eq:transferFunctionDifferenceRelation} for the triple $(AP, B,C)$. 
However, BIBO stability is not preserved in general, even if exponential stability is.

Recall that an operator $B \in \boundedOp{U}{X_{-1}}$ is called \emph{$L^p$-control-admissible} for a semigroup $\semiGroupDef$ if for all $t > 0$ the map $\Phi_t: \lpSpaceDT{p}{\posRealNum}{U} \rightarrow X_{-1}$ given by $\Phi_t(u) = \int_0^t \semiGroup{t-s} B u(s) \intd{s}$ maps into $X$ and is in $\boundedOp{\lpSpaceDT{p}{\posRealNum}{U}}{X}$ \cite[Def.~4.1]{a_Weiss89ControlOperators}. An operator $C \in \boundedOp{X_1}{Y}$ is called \emph{$L^p$-observation-admissible} if for some $t>0$ and any $x \in X_1$ the function $C \semiGroup{\cdot} x: [0,t] \rightarrow Y$ is in $\lpSpaceDT{p}{[0,t]}{Y}$ and the map $\Psi_t: X_1 \rightarrow \lpSpaceDT{p}{[0,t]}{Y}$ given by $\Psi_t(x) = C \semiGroup{\cdot} x$ is bounded with respect to the $X$-norm \cite[Def.~6.1]{a_WeissObservationOperators1989}. Furthermore, admissible operators are called \emph{infinite-time admissible} if the families of maps $\Phi_t$ respectively $\Psi_t$ are bounded uniformly in t.

Furthermore, we remind the reader that a system node $\Sigma(A,B,C,\mathbf{G})$ is called \emph{$L^2$-well-posed} if $B$ and $C$ are $L^2$-control-admissible and $L^2$-observation-admissible and in addition $\mathbf{G}$ is analytic and bounded on some right half-plane \cite[Prop.~4.9]{a_TucsnakWeissWellPosed2014}.

\begin{theorem}
\label{thm:MultPertResult}
For every separable Hilbert space $X$ there exists an $L^2$-well-posed, exponentially stable and $L^\infty$-BIBO stable system node $\Sigma(A,B,C,\mathbf{G})$ 
and a bounded and coercive operator $P: X \rightarrow X$ such that any system node  $\Sigma(AP,B,C,\widetilde{\mathbf{G}})$
is exponentially stable, but neither $L^\infty$-BIBO stable nor $L^2$-well-posed.
\end{theorem}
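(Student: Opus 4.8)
The plan is to construct an explicit example using Riesz-spectral systems and exploit the constructions already suggested in Remark~\ref{rem:RiesSufficientExtension}. The key idea is to find a system that is $L^\infty$-BIBO stable only because its transfer function vanishes (or is very small), but whose BIBO stability is fragile — destroyed by a multiplicative perturbation $P$ that slightly shifts the eigenvalue structure so that cancellations no longer occur. First I would fix, without loss of generality, a separable Hilbert space $X = \ell^2(\natNum)$ with an orthonormal basis $\{\phi_n\}$, and build $A$ as a diagonal (hence Riesz-spectral) generator with eigenvalues $\lambda_n$ in the open left half-plane chosen so that $\realPart{\lambda_n} \to 0$, yet $\omega(\mathbb{T}) < 0$, ensuring exponential stability. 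The eigenvalues must decay toward the imaginary axis slowly enough that the admissibility sums for $L^2$-well-posedness converge (giving $L^2$-well-posedness) while the BIBO condition from Proposition~\ref{prop:RieszSpectralSufficient} holds only by cancellation rather than absolute convergence.

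The next step is to realize the cancellation mechanism concretely via the ``stacked system'' idea from Remark~\ref{rem:RiesSufficientExtension}(2). I would take $A = \mathrm{diag}(\widetilde{A}, \widetilde{A})$, $B = \left[\begin{smallmatrix} \widetilde{B} \\ \widetilde{B} \end{smallmatrix}\right]$, $C = \left[\begin{smallmatrix} \widetilde{C} & -\widetilde{C} \end{smallmatrix}\right]$, so that the transfer function of the unperturbed system is identically zero (up to the additive constant), making $\Sigma(A,B,C,\mathbf{G})$ trivially $L^\infty$-BIBO stable while $L^2$-well-posedness and exponential stability are inherited from $\widetilde{A}, \widetilde{B}, \widetilde{C}$. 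The coefficient sequences $(\widetilde{b}_n)$, $(\widetilde{c}_n)$ should be chosen so that the individual system $\Sigma(\widetilde{A}, \widetilde{B}, \widetilde{C}, \widetilde{\mathbf{G}})$ is $L^2$-well-posed but violates condition~\eqref{condRiesz}, i.e.\ $\sum_k |\widetilde{b}_k \widetilde{c}^*_k / \realPart{\widetilde{\lambda}_k}| = \infty$. The perturbation $P$ is then taken to break the symmetry between the two copies — for instance $P = \mathrm{diag}(I, \mu I)$ with $\mu > 0$, $\mu \neq 1$, which is clearly bounded and coercive — so that $AP$ has eigenvalues $\widetilde{\lambda}_n$ in the first block and $\mu \widetilde{\lambda}_n$ in the second. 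After perturbation the transfer function becomes $\widetilde{\mathbf{G}}(s) = \sum_n \widetilde{b}_n \widetilde{c}^*_n \left( \frac{1}{s - \widetilde{\lambda}_n} - \frac{1}{s - \mu \widetilde{\lambda}_n} \right)$, which no longer vanishes, and the cancellation is only partial.

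**The main obstacle** will be proving that $\Sigma(AP, B, C, \widetilde{\mathbf{G}})$ is genuinely not $L^\infty$-BIBO stable, i.e.\ that its transfer function admits no representation as the Laplace transform of a measure of bounded variation (so that the failure is detected by Theorem~\ref{thm:classicalBIBOBoundedVariation}). This requires a careful lower-bound estimate showing that the candidate impulse response $\sum_n \widetilde{b}_n \widetilde{c}^*_n (e^{\widetilde{\lambda}_n t} - e^{\mu \widetilde{\lambda}_n t})$ fails to be integrable, or more robustly that $\widetilde{\mathbf{G}}$ cannot be any $\laplaceTr{h}$ with $h \in \mathcal{M}(\posRealNum, \complNum)$. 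I would exploit that for small differences the term in parentheses behaves like $(\mu - 1)\widetilde{\lambda}_n / (s - \widetilde{\lambda}_n)(s - \mu\widetilde{\lambda}_n)$, so the obstruction to integrability comes precisely from the divergent sum $\sum_k |\widetilde{b}_k \widetilde{c}^*_k / \realPart{\widetilde{\lambda}_k}|$ reappearing with weight $|\mu - 1|$. The delicate point is that a measure of bounded variation could in principle have singular parts, so ruling out \emph{any} such representation — not merely the absolutely continuous candidate — is what demands care; here one can use that the total variation norm bounds the sup norm of the Laplace transform on $\complNum_0$ together with growth/oscillation estimates of $\widetilde{\mathbf{G}}$ along the imaginary axis to derive a contradiction.

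Finally, I would verify the remaining assertions about $\Sigma(AP, B, C, \widetilde{\mathbf{G}})$: exponential stability follows since $AP$ still has spectrum in the open left half-plane with growth bound strictly negative (as $\mu > 0$ preserves negativity of real parts and the perturbation of a Riesz-spectral analytic generator by a bounded coercive multiplier keeps it sectorial), and failure of $L^2$-well-posedness follows from the known fact that $L^\infty$-BIBO stability is implied by $L^2$-well-posedness together with bounded $C$ in this analytic setting — so engineering $C$ to be unbounded while keeping admissibility intact (via $(\widetilde{c}_n) \notin \ell^2$ but still $L^2$-observation-admissible) simultaneously forces the loss of well-posedness after perturbation. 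The bookkeeping here is routine but must be done carefully to ensure all four claimed properties of the unperturbed system and all three of the perturbed system hold simultaneously for the single chosen parameter family.
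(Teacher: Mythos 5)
Your proposal follows essentially the same route as the paper: an interleaved ``stacked'' Riesz-spectral system whose transfer function is constant because the contributions of paired eigenvalues cancel, a bounded coercive diagonal perturbation of the form $(1,\mu,1,\mu,\dots)$ (the paper takes $\mu=2$) that breaks the pairing, and the observation that the resulting transfer function is unbounded on a right half-plane, which simultaneously rules out $L^\infty$-BIBO stability (a measure of bounded total variation has a bounded Laplace transform) and $L^2$-well-posedness. Two small points where your write-up should be repaired. First, your opening suggestion to take $\realPart{\lambda_n}\to 0$ while keeping $\omega(\mathbb{T})<0$ is impossible for a diagonal generator, whose growth bound is $\sup_n\realPart{\lambda_n}$; it is also unnecessary, since the divergence of $\sum_k|b_kc_k^\ast/\realPart{\lambda_k}|$ is already achieved with $\lambda_k\to-\infty$ and $b_k=c_k=1$, which is what the paper (and your own second paragraph) actually uses. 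Second, your argument that failure of $L^2$-well-posedness ``follows from'' the unboundedness of $C$ is a non sequitur: that $L^2$-well-posedness plus bounded $C$ implies BIBO stability says nothing when $C$ is unbounded. The clean argument --- and the one the paper gives --- is that the perturbed transfer function can be summed explicitly, $\widetilde{\mathbf{G}}(s)=\tfrac12\psi(1+\tfrac{s}{2})-\psi(1+s)+\widetilde{\alpha}$ with $\psi$ the digamma function, which tends to $-\infty$ as $s\to+\infty$ along the reals and is therefore unbounded on \emph{every} right half-plane; note that unboundedness on $\complNum_0$ alone (e.g.\ via oscillation on the imaginary axis, as you suggest) would suffice for the BIBO claim but not for disproving $L^2$-well-posedness, which only requires boundedness on \emph{some} right half-plane.
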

\begin{proof}
Let $(\phi_k)$ be a  Riesz basis of the Hilbert space $X$ with biorthogonal sequence $(\psi_k)$. Then consider the sequence $(\lambda_k) = (-1, -1, -2, -2, \ldots ) = \begin{cases} - \frac{k+1}{2} &  k \textrm{ odd} \\ -\frac{k}{2}  &  k \textrm{ even}  \end{cases}$.
By \cite[Prop.~2.6.2~\&~2.6.5]{b_TucsnakWeiss2009} we can define the diagonalizable operator $A: \dom(A) \subset X \rightarrow X$ using the sequence $(\lambda_k)$ that generates a strongly continuous semigroup $\semiGroupDef$ on $X$ that is clearly exponentially stable.

Furthermore, let $(b_k)_{k \in \natNum}$ and $(c_k)_{k \in \natNum}$ be the sequences $(b_k) = (1, 1, \ldots )$ and $(c_k) = (1, -1, 1, -1, \ldots ) = \begin{cases} 1 &  k \textrm{ odd} \\ -1  &  k \textrm{ even}  \end{cases}$.
Both of these sequences satisfy the Carleson measure criterion with respect to the eigenvalue sequence $(\lambda_k)_{k \in \natNum}$. Thus we can define operators $B := \cdot \, b \in \boundedOp{\complNum}{X_{-1}}$ and $C := \innerProd{\cdot}{c} \in \boundedOp{X_1}{\complNum}$ using $b := \sum_k b_k \phi_k \in X_{-1}$ and $c := \sum_k c_k^* \phi_k \in X_{-1}$ \cite[Thm.~5.3.2]{b_TucsnakWeiss2009}. Moreover, $B$ and $C$ are infinite-time $L^2$-admissible observation and control operators for the semigroup $\mathbb{T}$, respectively.

Using that $(s-A)^{-1} x = \sum_{k} \frac{\innerProd{x}{\psi_k}}{s - \lambda_k} \phi_k$ for all $s \in \complNum_0$,
we find that Equation~\eqref{eq:transferFunctionDifferenceRelation} takes the form $\mathbf{G}(s) - \mathbf{G}(t) = (t - s) \sum_{k} s_k$ for $s,t \in \complNum_0$,
where
\begin{equation}
    s_k = \begin{cases} \frac{1}{\left(s + \frac{k+1}{2}\right)\left(t+\frac{k+1}{2}\right)}  &  k \textrm{ odd} \\ -\frac{1}{\left(s + \frac{k}{2}\right)\left(t+\frac{k}{2}\right)}  &  k \textrm{ even} \end{cases}.
\end{equation}
This sum converges absolutely and by reordering 
we find that $\sum_{k} s_k = 0$ and for $s \in \complNum_0$ thus any transfer function for $(A,B,C)$ satisfies $\mathbf{G}(s) = \alpha$ with $\alpha \in \complNum$. 

We conclude that  $\Sigma(A,B,C,\mathbf{G})$ is indeed $L^2$-well-posed as $\mathbb{G} \in H^{\infty}(\complNum_0)$ \cite[Prop. 4.9]{a_TucsnakWeissWellPosed2014}. Furthermore $\laplaceInvTr{G(s)}(t) = \alpha \, \delta(t)$ exists in the distributional sense and is a measure of bounded variation, showing that the system is also $L^\infty$-BIBO stable.

Consider now the sequence $(p_k) = (1, 2, 1, 2 \ldots) = \begin{cases} 1 &  k \textrm{ odd} \\ 2  &  k \textrm{ even}  \end{cases}$. Then, by \cite[Prop.~2.5.4]{b_TucsnakWeiss2009} this defines an operator $P \in \boundedOpSelf{X}$ by $P x = \sum_k p_k \innerProd{x}{\psi_k} \phi_k$,
that is furthermore coercive.
The operator $A P: \dom(A) \rightarrow X$ is then still diagonal with respect to the Riesz basis $\phi_k$, being represented by the sequence $(\widetilde{\lambda}_k) = (\lambda_k p_k)$, and generates a strongly continuous semigroup $\semiGroupDefAlt{\widetilde{\mathbb{T}}}$ that is again exponentially stable. Furthermore, $B$ and $C$ are still infinite-time $L^2$-admissible control and observation operators as the sequences $(b_k)$ and $(c_k)$ still satisfy the Carleson measure criterion.

Employing Expression~\eqref{eq:transferFunctionDifferenceRelation} we find that for any transfer function $\widetilde{\mathbf{G}}(s)$ of the perturbed system $\Sigma(AP, B,C, \widetilde{\mathbf{G}})$ we have that $\widetilde{\mathbf{G}}(s) - \widetilde{\mathbf{G}}(t) = (t - s) \sum_{k} \widetilde{s}_k$
with 
\begin{equation}
    \widetilde{s}_k = \begin{cases} \frac{1}{\left(s + \frac{k+1}{2}\right)\left(t+\frac{k+1}{2}\right)}  &  k \textrm{ odd} \\ -\frac{1}{\left(s + k\right)\left(t+k\right)}  &  k \textrm{ even} \end{cases}.
\end{equation}
This sum again converges absolutely and, after reordering, we can read off that
\begin{equation}
    \widetilde{\mathbf{G}}(s) = \frac{\psi\left( 1 + \frac{s}{2} \right)}{2} - \psi(1+s) + \widetilde{\alpha},
\end{equation}
with $\psi(s) = \der{}{s} \ln \Gamma(s)$ being the digamma function \cite[6.3.1]{b_AbramowitzStegun1972} and $\widetilde{\alpha}$ some constant.

We observe that $\lim_{s \rightarrow \infty} \widetilde{\mathbf{G}}(s) = - \infty$ and thus the system node $\Sigma(AP, B, C,\widetilde{\mathbf{G}})$ is not $L^2$-well-posed. Furthermore, by \cite[Thm.~3.8.2]{b_GripenbergLondenStaffans1990}, if the inverse Laplace transform of $\widetilde{\mathbf{G}}$ were a measure of bounded variation, then $\widetilde{\mathbf{G}}$ would be bounded on some right half-plane. Therefore the system cannot be $L^\infty$-BIBO stable. 
\end{proof}

The above relation between BIBO stability and $L^{2}$-well-posedness is not that accidental, as the following remark shows.
\begin{remark}
As the Laplace transform of a measure of bounded total variation, the transfer function $\mathbf{G}$ of a BIBO stable system node is in particular bounded and analytic on the open right half-plane $\complNum_0^+$ \cite[Thm. 3.8.2]{b_GripenbergLondenStaffans1990}. In the case that $B$ and $C$ are $L^2$-admissible control and observation operators respectively, $L^2$-well-posedness is thus implied by BIBO stability \cite[Prop. 4.9]{a_TucsnakWeissWellPosed2014}.
\end{remark}

\section{BIBO stability under additive generator perturbations}
\label{sec:AddPerturbationsChapter}
Finally, we want to consider the analogous question to the one discussed in Section~\ref{sec:MultPerturbationsChapter}, but now for a bounded \emph{additive} perturbation, i.e.\ going from a system node $\Sigma(A,B,C, \mathbf{G})$ that is known to be BIBO stable to $\Sigma(A + P,B,C, \widetilde{\mathbf{G}})$ with $P \in \boundedOpSelf{X}$.

\subsection{System nodes with the identity as control/observation operator}
We begin by considering two particular types of system nodes, namely those for which either $B$ or $C$ is the identity operator. Note that for these the input resp. output space will not be finite-dimensional, so that the results of Section~\ref{sec:BIBOStability} do not apply.

In particular, this means that we cannot establish BIBO stability using the condition on the inverse Laplace transform of the transfer function. Instead we will have to directly show that the BIBO-inequality~\eqref{eq:BIBOInequality} is satisfied.
\begin{lemma}
\label{lem:ABIsystemBIBO}
Let $\Sigma(A,B,\mathbb{I}, \mathbf{G})$ be a system node with $B$ infinite-time $L^\infty$-control-admissible. Then $\Sigma(A,B,\mathbb{I}, \mathbf{G})$ is $L^\infty$-BIBO stable. \newline
In particular, the assertion holds if $B$ is $L^p$-control-admissible for some $1 \leq p \leq \infty$ and  $A$ generates an exponentially stable semigroup. 
\end{lemma}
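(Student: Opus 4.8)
The plan is to exploit that, with $C=\mathbb{I}$, we have $Y=X$ and the output of the system is nothing but the state $x$ up to a bounded feedthrough term; its uniform $\|\cdot\|_\infty$-bound is then exactly what infinite-time $L^\infty$-control-admissibility of $B$ delivers. First I would fix $\beta\in\complNum_{\omega(\mathbb{T})}$ and set $D:=\mathbf{G}(\beta)-\resolvent{\beta}{A}B$. Both summands lie in $\boundedOp{U}{X}$, so $D\in\boundedOp{U}{X}$, and the difference relation \eqref{eq:transferFunctionDifferenceRelation} (with $C=\mathbb{I}$) shows that $D$ does not depend on $\beta$. Reading off \eqref{eq:defCandDOperator} with $C=\mathbb{I}$ then gives the pointwise identity $(C\&D)\left[\begin{smallmatrix} x \\ u \end{smallmatrix}\right]=x+Du$ for every $\left[\begin{smallmatrix} x \\ u \end{smallmatrix}\right]\in\dom(C\&D)$.

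Next I would take an arbitrary generalised distributional solution $(u,x,y)$ with $x(0)=0$ and $u\in L^\infty_{\loc}(\posRealNum,U)$ and argue exactly as in the proof of Proposition~\ref{prop:BIBOIneqForDistrSol}. Setting $K(t):=\int_0^t(t-s)x(s)\intd{s}$ and $L(t):=\int_0^t(t-s)u(s)\intd{s}$, the cited regularity results guarantee $\left[\begin{smallmatrix} K(t) \\ L(t) \end{smallmatrix}\right]\in\dom(C\&D)$, whence $(C\&D)\left[\begin{smallmatrix} K(t) \\ L(t) \end{smallmatrix}\right]=K(t)+DL(t)$ by the previous step. Differentiating twice distributionally and using $K''=x$ and $L''=u$ yields the clean identification $y=x+Du$.

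It then remains only to bound this. Infinite-time $L^\infty$-control-admissibility says precisely that $x(t)=\Phi_t(u)\in X$ with $\sup_{t>0}\|\Phi_t\|=:M<\infty$, so that $\|x(t)\|_X\le M\|u\|_{\lpSpaceDT{\infty}{[0,t]}{U}}$ uniformly in $t$; together with $\|Du(t)\|_X\le\|D\|\,\|u(t)\|_U$ this shows $y\in L^\infty_{\loc}(\posRealNum,X)$ and the BIBO-inequality $\|y\|_{\lpSpaceDT{\infty}{[0,t]}{X}}\le(M+\|D\|)\|u\|_{\lpSpaceDT{\infty}{[0,t]}{U}}$, i.e.\ $L^\infty$-BIBO stability with $c=M+\|D\|$.

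For the addendum I would show that $L^p$-control-admissibility plus exponential stability forces infinite-time $L^\infty$-control-admissibility. Since $\lpSpaceDT{\infty}{[0,1]}{U}\hookrightarrow\lpSpaceDT{p}{[0,1]}{U}$ has norm $1$, the map $\Phi_1$ is bounded from $\lpSpaceDT{\infty}{[0,1]}{U}$ into $X$; call its norm $M_1$. Splitting $[0,t]$ into unit blocks and using $\semiGroup{t-s}=\semiGroup{k}\semiGroup{t-k-s}$ on the $k$-th block, each block contributes at most $\|\semiGroup{k}\|\,M_1\,\|u\|_{\lpSpaceDT{\infty}{[0,t]}{U}}$, so summing the geometric tail $\sum_k\|\semiGroup{k}\|\lesssim\sum_k e^{-\omega k}<\infty$ (for some $\omega>0$, available by exponential stability) gives a bound independent of $t$. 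I expect this upgrade from finite-time $L^p$- to infinite-time $L^\infty$-admissibility to be the main technical point; by contrast, the identification $y=x+Du$ and the ensuing estimate are essentially forced by the structure $C=\mathbb{I}$.
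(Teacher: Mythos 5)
Your proof is correct and follows essentially the same route as the paper: with $C=\mathbb{I}$ the output is $x+Du$ with $D=\mathbf{G}(\beta)-\resolvent{\beta}{A}B\in\boundedOp{U}{X}$, and the state term is controlled uniformly in $t$ precisely by infinite-time $L^\infty$-control-admissibility. The only differences are cosmetic: you justify the identification $y=x+Du$ via the twice-integrated distributional definition (slightly more careful than the paper's direct evaluation of $C\&D$), and you prove the upgrade from $L^p$- to infinite-time $L^\infty$-admissibility by hand via the unit-block splitting, where the paper simply cites \cite[Lem.~2.9]{a_JacobEtAlISS2018}.
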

\begin{proof}
Let $u \in \lpSpacelocDT{\infty}{\posRealNum}{X}$ and $(u,x,y)$ be the corresponding distributional solution with $x(0) = 0$ of the system node. Then the infinite-time $L^\infty$-control-admissibility implies that there exists $c > 0$ such that $x(t) = \int_0^t \semiGroup{t - s} B u(s) \intd{s} \in X$ and $\left\| x(t) \right\|_X \leq c \left\| u \right\|_{\lpSpaceDT{\infty}{[0,t]}{X}}$ for all $t>0$,
see e.g.\ \cite{a_JacobSchwenningerZwartParabolicISS2019}. The function
\begin{equation}
    y(t) = C\&D \begin{bmatrix} x(t) \\ u(t) \end{bmatrix} = x(t) - \resolvent{\beta}{A} B u(t) + \mathbf{G}(\beta) u(t),
\end{equation}
is measurable and thus $y$ is a function and 
\begin{equation}
\begin{split}
    \left\| y(t) \right\|_X &= \left\| x(t) - \resolvent{\beta}{A} B u(t) + \mathbf{G}(\beta) u(t) \right\|_X \\
    &\leq \left\| x(t) \right\|_X + \| \resolvent{\beta}{A} \| \| B \| \left| u(t) \right|_U + \| \mathbf{G}(\beta) \| \left| u(t) \right|_U \\
    &\leq  \left( c + \| \resolvent{\beta}{A} \| \| B \| + \| \mathbf{G}(\beta) \| \right) \left\| u \right\|_{\lpSpaceDT{\infty}{[0,t]}{X}},
\end{split}
\end{equation}
showing that the system node is $L^\infty$-BIBO stable.\newline 
The second part of the lemma directly follows from the first by noting that $L^{p}$-admissibility implies infinite-time $L^{\infty}$-admissibility when the semigroup is exponentially stable, see e.g.\ \cite[Lem.~2.9]{a_JacobEtAlISS2018}.
\end{proof}

\begin{remark}
Note that the converse of Proposition~\ref{lem:ABIsystemBIBO}, i.e.\ that $L^\infty$-BIBO stability of the system node $\Sigma(A,B,\mathbb{I},\mathbf{G})$ implies $L^\infty$-control-admissibility of $B$, does not hold.
Indeed, following \cite[Example~2.3]{a_JacobSchwenningerWintermayr2022}, consider the system node $\Sigma(A,A_{-1},\mathbb{I})$ on the state space $X = c_0(\natNum)$ with the diagonal operator $A = \textrm{diag}(n)$. Then this system node is $L^\infty$-BIBO stable, but $A_{-1}$ is not $L^\infty$-control-admissible.
\end{remark}

\begin{corollary}
\label{cor:AICsystemBIBO}
Let $\Sigma(A,B,C, \mathbf{G})$ be a system node with  an $L^p$-observation-admissible operator $C \in \boundedOp{X_1}{Y}$ for some $p\in(1,\infty)$, $B \in \boundedOp{U}{X}$ and $A$ the generator of an exponentially stable and analytic semigroup. Then $\Sigma(A,B,C, \mathbf{G})$ is  $L^\infty$-BIBO stable.
\end{corollary}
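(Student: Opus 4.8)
The plan is to prove Corollary~\ref{cor:AICsystemBIBO} by reducing it to the previously established Lemma~\ref{lem:ABIsystemBIBO} via a duality argument. The key structural observation is that the statement of Corollary~\ref{cor:AICsystemBIBO} concerns a system node $\Sigma(A,B,C,\mathbf{G})$ where $C$ carries the admissibility burden and $B$ is bounded into $X$, whereas Lemma~\ref{lem:ABIsystemBIBO} handles exactly the reverse situation: $B$ admissible and the observation operator equal to the identity. Since $L^\infty$-BIBO stability is a property of the input-output behaviour and the transfer function $\mathbf{G}$ (up to the measure-of-bounded-variation characterisation from Section~\ref{sec:BIBOStability}), I expect the natural route is to dualise so that the $L^p$-observation-admissibility of $C$ becomes $L^{p'}$-control-admissibility of $C^*$ for the adjoint semigroup, and then invoke Lemma~\ref{lem:ABIsystemBIBO} for the dual system node.

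Concretely, first I would set up the dual system node $\Sigma(A^*, C^*, B^*, \mathbf{G}^*)$. Because $A$ generates an exponentially stable analytic semigroup on the Hilbert (or reflexive Banach) space $X$, the adjoint $A^*$ generates the exponentially stable analytic adjoint semigroup $\semiGroup{\cdot}^*$. The hypothesis that $C$ is $L^p$-observation-admissible translates, by the standard duality between observation and control admissibility (as used for instance in the second statement of Proposition~\ref{prop:CTBinL1BIBO}), into $C^*$ being $L^{p'}$-control-admissible for the adjoint semigroup, with $p' \in (1,\infty)$ the conjugate exponent. Then, since the adjoint semigroup is exponentially stable, the second part of Lemma~\ref{lem:ABIsystemBIBO} upgrades this $L^{p'}$-control-admissibility to infinite-time $L^\infty$-control-admissibility of $C^*$. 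Applying the first part of Lemma~\ref{lem:ABIsystemBIBO} to $\Sigma(A^*, C^*, \mathbb{I}, \cdot)$ would yield $L^\infty$-BIBO stability of a system node built from $A^*$ and $C^*$.

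The main obstacle will be correctly routing the roles of the operators so that the $L^\infty$-BIBO stability of the dual/identity system actually transfers back to the original $\Sigma(A,B,C,\mathbf{G})$ with its genuine bounded input operator $B \in \boundedOp{U}{X}$. The cleanest path I foresee is to factor the input-output map of $\Sigma(A,B,C,\mathbf{G})$: since $B$ maps boundedly into $X$, the state trajectory $x(t) = \int_0^t \semiGroup{t-s} B u(s)\intd{s}$ is governed by the well-understood bounded-input dynamics, and the output $y(t) = C\&D\left[\begin{smallmatrix} x(t) \\ u(t) \end{smallmatrix}\right]$ involves applying the observation part to this state. The relevant estimate is that the observation map $x_0 \mapsto C\semiGroup{\cdot}x_0$ must be controlled in the $\|\cdot\|_\infty$-norm; this is precisely what infinite-time $L^\infty$-observation-admissibility provides, and the analyticity together with exponential stability should promote the given $L^p$-observation-admissibility (for $p \in (1,\infty)$) to the $L^\infty$ level via the same mechanism cited in Lemma~\ref{lem:ABIsystemBIBO} (namely \cite[Lem.~2.9]{a_JacobEtAlISS2018} and its observation-side analogue).

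I would therefore structure the final argument as follows: establish infinite-time $L^\infty$-observation-admissibility of $C$ from the hypotheses (using analyticity, exponential stability, and the $p\in(1,\infty)$ admissibility), then bound $\|y\|_{\infty}$ on $[0,t]$ directly through the decomposition of $C\&D$ exactly as in the proof of Lemma~\ref{lem:ABIsystemBIBO}, splitting $y(t)$ into the admissible observation term plus the bounded feedthrough-type terms involving $\resolvent{\beta}{A}B$ and $\mathbf{G}(\beta)$. The delicate point, and the step I expect to require the most care, is justifying the $L^p$-to-$L^\infty$ promotion for the observation operator in the analytic-semigroup setting when $p$ is finite but possibly large; this is where the analyticity is genuinely used, since for analytic exponentially stable semigroups the smoothing estimates on $\|C\semiGroup{t}x\|$ near $t=0$ combined with exponential decay yield the uniform $L^\infty$ bound, mirroring the control-admissibility promotion invoked in Lemma~\ref{lem:ABIsystemBIBO}.
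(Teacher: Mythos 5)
There is a genuine gap, and it sits exactly at the step you flag as delicate: the claimed promotion of $L^p$-observation-admissibility ($p$ finite) to infinite-time $L^\infty$-observation-admissibility. The mechanism of \cite[Lem.~2.9]{a_JacobEtAlISS2018} works for \emph{control} operators because $\|u\|_{L^p([0,t])}\leq t^{1/p}\|u\|_{L^\infty([0,t])}$, i.e.\ the input norm one needs to control is \emph{weaker} than the one supplied; for observation operators the required inequality $\|C\mathbb{T}(\cdot)x\|_{L^\infty}\lesssim\|C\mathbb{T}(\cdot)x\|_{L^p}$ goes the wrong way and has no analogue. Worse, infinite-time $L^\infty$-observation-admissibility is essentially equivalent to boundedness of $C$: for $x\in X_1$ one has $Cx=\lim_{s\to 0}C\mathbb{T}(s)x$, so $\sup_{s\in[0,t]}\|C\mathbb{T}(s)x\|_Y\lesssim\|x\|_X$ forces $C$ to extend to $\mathcal{L}(X,Y)$ (compare Proposition~\ref{prop:counterExampleAIC}). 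The analytic smoothing estimate gives $\|C\mathbb{T}(t)x\|\lesssim t^{-\alpha}\|x\|$, which blows up at $t=0$ rather than yielding a uniform bound, so analyticity cannot rescue this step. Your opening duality route has a second, independent problem that you yourself half-identify: applying Lemma~\ref{lem:ABIsystemBIBO} to $\Sigma(A^*,C^*,\mathbb{I},\mathbf{G}^*)$ gives BIBO stability of the \emph{dual} node only, and the paper explicitly notes (remark following Proposition~\ref{prop:counterexampleAICL1}) that BIBO stability is not preserved under duality, so nothing transfers back.

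The paper's actual argument is one line and uses a different mechanism: for an analytic semigroup, $L^p$-observation-admissibility with $p\in(1,\infty)$ implies $C\in\mathcal{L}(X_\alpha,Y)$ for some $\alpha\in(0,1)$ (cited to \cite{a_PreusslerSchwenninger2023}). Since $B\in\mathcal{L}(U,X)=\mathcal{L}(U,X_{-0})$, the exponents sum to $\alpha<1$ and Theorem~\ref{thm:sufficientResultSmaller1} applies, giving $C\mathbb{T}(\cdot)B\in L^1(\posRealNum,\mathcal{L}(U,Y))$ and hence $L^\infty$-BIBO stability via Proposition~\ref{prop:CTBinL1BIBO}. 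Note this is also where analyticity is genuinely used: not to bound $\|C\mathbb{T}(t)x\|$ uniformly, but to make $t\mapsto\|C\mathbb{T}(t)B\|\lesssim t^{-\alpha}e^{-\delta t}$ integrable near $t=0$. If you want to keep the spirit of your direct estimate on $y(t)=C[x(t)-(\beta-A)^{-1}Bu(t)]+\mathbf{G}(\beta)u(t)$, the fix is to replace ``$L^\infty$-observation-admissibility'' by the fractional-domain bound: analyticity gives $\|x(t)\|_{X_\alpha}\lesssim\int_0^t(t-s)^{-\alpha}e^{-\delta(t-s)}\,\mathrm{d}s\,\|B\|\,\|u\|_{L^\infty([0,t],U)}$, and then $C\in\mathcal{L}(X_\alpha,Y)$ closes the argument.
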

\begin{proof}
This follows from Proposition~\ref{thm:sufficientResultSmaller1}, as any $L^p$-observation-admissible operator $C$ is bounded as an operator from $X_{\alpha}$ to $Y$ for some $\alpha\in(0,1)$, see e.g.\ \cite{a_PreusslerSchwenninger2023}.
\end{proof}

The following shows that Corollary~\ref{cor:AICsystemBIBO} does not hold for arbitrary semigroups. 
\begin{proposition}
\label{prop:counterExampleAIC}
Let $A$ generate a right-invertible strongly continuous semigroup $\semiGroupDef$ and let $C:X_1 \rightarrow Y$ be an observation operator such that there exists  $c>0$ such that for all classical solutions $(u,x,y)$ of $\Sigma(A, \mathbb{I}, C, C \resolvent{\cdot}{A})$ with $x(0) = 0$ and all $T > 0$ we have $ \| y \|_{L^\infty([0,T],Y)} \leq c \| u \|_{L^\infty([0,T],X)} $.
Then $C$ extends to a bounded operator from $X$ to $Y$.
\end{proposition}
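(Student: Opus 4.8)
The plan is to exploit the very special structure of the transfer function $\mathbf{G}=C\resolvent{\cdot}{A}$ together with the right-invertibility of the semigroup in order to \emph{reach} a prescribed state at a fixed time, using an input whose $L^\infty$-norm is controlled by the $X$-norm of that state. First I would record the shape of the output of a classical solution $(u,x,y)$ with $x(0)=0$. Since $B=\mathbb{I}$, the variation of constants formula~\eqref{eq:varOfConstantsFormula} gives $x(t)=\int_0^t \semiGroup{t-s}u(s)\intd{s}$, and by~\eqref{eq:defCandDOperator} the output is $y(t)=C\&D\left[\begin{smallmatrix} x(t)\\ u(t)\end{smallmatrix}\right]=C\left[x(t)-\resolvent{\beta}{A}u(t)\right]+C\resolvent{\beta}{A}u(t)$. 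The crucial observation is that whenever $x(t)\in X_1$ this collapses to $y(t)=Cx(t)$. Since $X_1$ is dense in $X$, it therefore suffices to establish $\|Cw\|_Y\lesssim\|w\|_X$ for $w$ in a subspace of $X_1$ that is dense in the $X_1$-norm; the estimate then passes to all of $X_1$ by boundedness of $C\in\boundedOp{X_1}{Y}$, and finally yields a bounded operator on $X$ by the bounded-linear-extension theorem.

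Second, I would use the right inverse to build the reaching input. Writing $\mathbb{T}_r(t)$ for the right-inverse family, $\semiGroup{t}\mathbb{T}_r(t)=\mathbb{I}$, fix $T>0$ and $w\in X_1$ and consider the candidate input $u(s):=\tfrac1T\mathbb{T}_r(T-s)w$. Substituting into the variation of constants formula and using $\semiGroup{T-s}\mathbb{T}_r(T-s)=\mathbb{I}$ gives
\begin{equation}
    x(T)=\frac1T\int_0^T \semiGroup{T-s}\mathbb{T}_r(T-s)w\intd{s}=\frac1T\int_0^T w\intd{s}=w,
\end{equation}
while $\|u\|_{\lpSpaceDT{\infty}{[0,T]}{X}}\le \tfrac1T\sup_{0\le\tau\le T}\|\mathbb{T}_r(\tau)\|\,\|w\|_X$, the supremum being finite by local boundedness of $\mathbb{T}_r$ (which follows from its strong continuity via the uniform boundedness principle). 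As $x(T)=w\in X_1$, the first step yields $y(T)=Cw$, so the assumed BIBO-inequality on $[0,T]$ would give $\|Cw\|_Y=\|y(T)\|_Y\le c\|u\|_{\lpSpaceDT{\infty}{[0,T]}{X}}\le M\|w\|_X$ with $M=\tfrac cT\sup_{0\le\tau\le T}\|\mathbb{T}_r(\tau)\|$. This is exactly the bound sought, and it remains to justify that $u$, or a smooth approximation of it, genuinely produces a classical solution to which the hypothesis applies.

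The hard part will be legitimising this computation, since $u(s)=\tfrac1T\mathbb{T}_r(T-s)w$ is in general only as regular as $\tau\mapsto\mathbb{T}_r(\tau)w$ and hence need not be $C^2$, so Lemma~\ref{lem:existenceClassicalSolutions} does not directly supply a classical solution reaching $w$. I would resolve this by a regularisation argument: mollify $u$ in time to obtain smooth, admissible inputs $u_\epsilon$ with $\|u_\epsilon\|_{\lpSpaceDT{\infty}{[0,T]}{X}}\le M\|w\|_X+o(1)$, for which classical solutions $(u_\epsilon,x_\epsilon,y_\epsilon)$ exist, and then let $\epsilon\to0$. The key point to verify is that the terminal data converge in the graph norm of $C\&D$, that is $x_\epsilon(T)\to w$ in $X$ together with $A_{-1}x_\epsilon(T)+u_\epsilon(T)\to A_{-1}w+u(T)$ in $X$; the boundedness of $C\&D$ from its domain (with the graph norm) to $Y$ then forces $y_\epsilon(T)=C\&D\left[\begin{smallmatrix} x_\epsilon(T)\\ u_\epsilon(T)\end{smallmatrix}\right]\to Cw$, and the uniform BIBO bound is preserved in the limit. (If, as in the guiding examples, $\mathbb{T}_r$ is itself a strongly continuous semigroup, one may instead take $w$ in a dense subspace on which $\tau\mapsto\mathbb{T}_r(\tau)w$ is $C^2$, obtaining exact reaching by a genuine classical solution and bypassing the limiting step entirely.) Controlling this graph-norm convergence, and pinning down the precise regularity and local-boundedness properties of $\mathbb{T}_r$ guaranteed by right-invertibility, is where the real work lies; the remaining density and extension steps are then routine.
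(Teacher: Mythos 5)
Your overall strategy (use the right-invertibility to steer the state to a prescribed $w$ and read off $Cw$ from the output) is in the right spirit and is in fact a close relative of the paper's argument, but as written it has two genuine gaps. First, you assume that the right inverses form a strongly continuous, locally bounded family $\mathbb{T}_r(\cdot)$; the hypothesis only provides, for each fixed $t$, \emph{some} bounded right inverse of $\semiGroup{t}$, with no continuity in $t$ and no canonical choice (right inverses are not unique when $\semiGroup{t}$ has a kernel). This gap is repairable: fixing $t_0>0$ and a right inverse $S(t_0)$, the operators $\mathbb{T}_r(\tau):=\semiGroup{t_0-\tau}S(t_0)$ are right inverses of $\semiGroup{\tau}$ for $\tau\in[0,t_0]$ and form a strongly continuous, uniformly bounded family — but you need to say this, and once you do, your input becomes $u(s)=\tfrac{1}{t_0}\semiGroup{s}\bigl(S(t_0)w\bigr)$, i.e.\ exactly the paper's input $\semiGroup{\cdot}x_U$ with $x_U=S(t_0)w$.

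The second gap is the one you flag yourself, and it is not merely technical: your mollification step needs $y_\epsilon(T)=Cx_\epsilon(T)\to Cw$, but $u_\epsilon\to u$ only yields $x_\epsilon(T)\to w$ in $X$, and concluding $Cx_\epsilon(T)\to Cw$ from $X$-convergence presupposes precisely the $X$-boundedness of $C$ that you are trying to prove — the argument is circular unless you establish convergence in $X_1$ (equivalently in the graph norm of $C\&D$), which does not follow from your construction. The clean fix is your parenthetical alternative: restrict to $w$ in a dense subspace for which $u$ is already a legitimate classical input. The paper does exactly this, but organizes it the other way around: it takes $x_U\in\dom(A)$ and $u=\semiGroup{\cdot}x_U$, so that $x(t)=t\,\semiGroup{t}x_U\in X_1$ and $(u,x,Cx)$ is a genuine classical solution with no limiting argument; the BIBO inequality then gives $\|t\,C\semiGroup{t}x_U\|_Y\lesssim\|x_U\|_X$, so $t\,C\semiGroup{t}$ extends to an element of $\boundedOp{X}{Y}$, and only \emph{then} is a single right inverse $S(t)$ invoked to write $t\,Cx=t\,C\semiGroup{t}S(t)x$ and conclude. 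I would recommend recasting your proof in that order: it needs the right inverse only at one time, requires no regularity of a right-inverse family, and avoids the circular limit entirely.
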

\begin{proof}
Let $x_U \in \dom(A)$ and define the input function $u = \semiGroup{\cdot} x_U$. Then the function 
$t \mapsto x(t) := \int_0^t \semiGroup{t - s} u(s) = t \semiGroup{t} x_U$ satisfies $x(t) \in X_1$ for all $t \geq 0$ and is continuously differentiable in $X$ with derivative $\dot{x}(t) = \semiGroup{t} x_U + t A \semiGroup{t} x_U = A x(t) + u(t)$ \cite[Thm.~3.2.1]{b_Staffans2005}. Hence $(u,x,Cx)$ is a classical solution of the system node $\Sigma(A, \mathbb{I}, C, C \resolvent{\cdot}{A})$ and by the assumption we thus have that for $t > 0$
\begin{equation}
\begin{split}
    \left\| t C \semiGroup{t} x_U \right\|_Y 
    \leq \| C x(\cdot) \|_{\lpSpaceDT{\infty}{[0,t]}{Y}} &\lesssim \| u \|_{\lpSpaceDT{\infty}{[0,t]}{X}} 
    \leq  \sup_{\tau \in [0,t]} \left\| \semiGroup{\tau} \right\| \| x_U \|_X.
\end{split}
\end{equation}
Thus $t C \semiGroup{t}$ 
extends continuously to an operator in $\boundedOp{X}{Y}$.
But then we find for some fixed $t > 0$ and with $S(t) \in \boundedOpSelf{X}$ denoting the right inverse of $\semiGroup{t}$ that
\begin{equation}
    \left\| t \, C x \right\|_Y = \left\| t \, C \, \semiGroup{t} S(t) x \right\|_Y \leq \left\| t \, C \, \semiGroup{t}  \right\| \left\| S(t) x \right\|_X \leq \left\| t \, C \, \semiGroup{t}  \right\| \left\| S(t) \right\| \left\| x \right\|_X,
\end{equation}
and thus $\left\|C x \right\|_Y \lesssim \|x\|_X$ implying that $C$ extends to $C \in \boundedOp{X}{Y}$.
\end{proof}
Note furthermore that $C$ being $L^1$-observation-admissible is in general not sufficient to ensure $L^\infty$-BIBO stability of $\Sigma(A, \mathbb{I}, C, C \resolvent{\cdot}{A})$.
\begin{proposition}
\label{prop:counterexampleAICL1}
There are system nodes $\Sigma(A, \mathbb{I}, C, C \resolvent{\cdot}{A})$ with $C$ being $L^1$-observation-admissible that are not $L^\infty$-BIBO stable.
\end{proposition}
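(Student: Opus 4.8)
The plan is to produce one explicit system node of the stated form and to deduce the failure of $L^\infty$-BIBO stability from Proposition~\ref{prop:counterExampleAIC}. That proposition asserts that whenever $A$ generates a \emph{right-invertible} semigroup, the mere validity of the BIBO-inequality for the classical solutions of $\Sigma(A,\mathbb{I},C,C\resolvent{\cdot}{A})$ already forces $C$ to extend to a bounded operator from $X$ to $Y$. Since every classical solution is in particular a generalised solution in the distributional sense (Lemma~\ref{lem:existenceUniquenessGenSolutions}), $L^\infty$-BIBO stability would yield exactly the hypothesis of Proposition~\ref{prop:counterExampleAIC}. Thus it suffices to exhibit a generator $A$ of a right-invertible strongly continuous semigroup together with an $L^1$-observation-admissible operator $C$ that does \emph{not} extend to a bounded operator on $X$; by contraposition the corresponding system node cannot be $L^\infty$-BIBO stable.

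For this I would take $X = U = \lpSpaceDT{2}{\posRealNum}{\complNum}$, $Y = \complNum$, and let $A$ be the generator of the left-shift semigroup $(\semiGroup{t}f)(\sigma) = f(\sigma + t)$, so that $Af = f'$ on $\dom(A) = H^1(\posRealNum)$ and the growth bound is $\omega(\mathbb{T}) = 0$. As observation operator I choose the trace at the origin, $Cf := f(0)$, which is bounded on $X_1 = H^1(\posRealNum)$; since $\resolvent{s}{A}$ maps $X$ boundedly into $X_1$, the transfer function $\mathbf{G} = C\resolvent{\cdot}{A}$ takes values in $\boundedOp{U}{Y}$ and $\Sigma(A,\mathbb{I},C,C\resolvent{\cdot}{A})$ is a genuine system node. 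This example is deliberately placed outside the scope of Corollary~\ref{cor:AICsystemBIBO}: the shift semigroup is neither analytic nor exponentially stable.

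The verification then splits into three routine checks. First, $C$ is $L^1$-observation-admissible (indeed $L^2$-observation-admissible): since $C\semiGroup{s}f = f(s)$, one has $\|C\semiGroup{\cdot}f\|_{\lpSpaceDT{1}{[0,t]}{\complNum}} = \int_0^t |f(s)|\intd{s} \leq \sqrt{t}\,\|f\|_X$ by Cauchy--Schwarz. Second, $C$ does not extend to a bounded operator $X \to \complNum$: testing on $f_\varepsilon(\sigma) = e^{-\sigma/\varepsilon}$ gives $f_\varepsilon(0) = 1$ while $\|f_\varepsilon\|_X^2 = \varepsilon/2 \to 0$, ruling out any bound $|f(0)| \lesssim \|f\|_X$. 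Third, the left-shift semigroup is right-invertible: the right shift $S(t)$, extending $g$ by $g(\cdot - t)$ beyond $t$ and by $0$ on $[0,t)$, is an isometry with $\semiGroup{t}S(t) = \mathbb{I}$. With these three facts in hand, Proposition~\ref{prop:counterExampleAIC} applies in its contrapositive form and yields that $\Sigma(A,\mathbb{I},C,C\resolvent{\cdot}{A})$ is not $L^\infty$-BIBO stable, as claimed. The only point requiring genuine care---rather than mere computation---is confirming that the chosen semigroup really is right-invertible for the fixed time used in Proposition~\ref{prop:counterExampleAIC}, which is exactly what the right-shift inverse $S(t)$ supplies; everything else follows from the classical trace inequality on $H^1(\posRealNum)$ and the elementary estimate above.
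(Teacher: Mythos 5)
Your argument is correct and does establish the literal statement, but it takes a genuinely different route from the paper's, and the two proofs buy different things. You exhibit an explicit example (the left shift on $L^2(\posRealNum)$ with the trace $Cf=f(0)$) and invoke Proposition~\ref{prop:counterExampleAIC} in contrapositive form; your three checks ($L^1$-admissibility via Cauchy--Schwarz, unboundedness of the trace with respect to the $L^2$-norm, right-invertibility via the isometric right shift) are all sound, and the passage from $L^\infty$-BIBO stability to the classical-solution inequality required in Proposition~\ref{prop:counterExampleAIC} is legitimate because classical solutions are generalised solutions with continuous, hence locally bounded, inputs (Lemma~\ref{lem:existenceUniquenessGenSolutions}). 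The paper argues quite differently and non-constructively: it takes an \emph{analytic, exponentially stable} semigroup (from \cite{a_JacobSchwenningerZwartParabolicISS2019}) for which \emph{every} $C\in\boundedOp{X_1}{\complNum}$ is automatically $L^1$-observation-admissible, assumes all the resulting nodes were $L^\infty$-BIBO stable, and via a closed-graph/uniform-boundedness argument deduces that $A^*$ would be $L^1$-observation-admissible, hence $A$ $L^\infty$-control-admissible and therefore bounded -- a contradiction. What the paper's proof buys is sharpness of the exponent in Corollary~\ref{cor:AICsystemBIBO}: even within the analytic, exponentially stable class, $p=1$ does not suffice. Your example cannot deliver that refinement, since your semigroup is neither analytic nor exponentially stable and your $C$ is in fact $L^2$-observation-admissible (so the failure is attributable to the semigroup rather than to the admissibility exponent); in context it reiterates the message of Proposition~\ref{prop:counterExampleAIC} rather than isolating the role of $p=1$. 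As a proof of the proposition exactly as stated, however, it stands, and it has the virtue of being completely explicit and elementary.
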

\begin{proof}
There exists a Hilbert space $X$ and an unbounded operator $A$ that generates an exponentially stable and analytic semigroup such that any operator $C \in \boundedOp{X_1}{\complNum}$ is $L^1$-observation-admissible \cite{a_JacobSchwenningerZwartParabolicISS2019}. Furthermore, any such $C$ can be written for all $x \in X_1$ as $C x = \innerProd{\widetilde{c}}{Ax}_{X}$
with some $\widetilde{c} \in X$.

Assume for any such $C$ the system node  $\Sigma(A, \mathbb{I}, C, C \resolvent{\cdot}{A})$ was $L^\infty$-BIBO stable. Then consider, for any $f \in \lpSpaceDT{\infty}{\posRealNum}{X}$ with $\|f\|_{\lpSpaceDT{\infty}{\posRealNum}{X}} = 1$ the map
\begin{equation}
    S_f: \boundedOp{X_1}{\complNum} \rightarrow \lpSpaceDT{\infty}{\posRealNum}{Y}, \qquad
    C \mapsto C \int_0^\cdot \semiGroup{\cdot - s} f(s) \intd{s}.
\end{equation}
One can show that each of these maps is closed and thus by a closed graph argument bounded. Then by the uniform boundedness principle, there exists $K > 0$ such that for any $C \in \boundedOp{X_1}{\complNum}$ and any $f \in \lpSpaceDT{\infty}{\posRealNum}{X}$
\begin{equation}
    \left\| C \int_0^\cdot \semiGroup{\cdot - s} f(s) \intd{s} \right\|_{\lpSpaceDT{\infty}{\posRealNum}{Y}} \leq K \left\| C \right\|_{\boundedOp{X_1}{\complNum}} \left\| f \right\|_{\lpSpaceDT{\infty}{\posRealNum}{X}}.
\end{equation}
Thus, as
\begin{equation}
    \begin{split}
        \left\| C \int_0^\cdot \semiGroup{\cdot - s} f(s) \intd{s} \right\|_{\lpSpaceDT{\infty}{\posRealNum}{Y}} 
        &= \left\| \int_0^\cdot \innerProd{A^* \mathbb{T}^*(\cdot - s) \widetilde{c}}{f(s)}  \intd{s} \right\|_{\lpSpaceDT{\infty}{\posRealNum}{Y}},
    \end{split}
\end{equation}
and $\left\| C \right\|_{\boundedOp{X_1}{\complNum}} \lesssim \|\widetilde{c}\|_X$ we find that
\begin{equation}
\label{eqn:cCounterexampleInequality}
    \left\| \int_0^\cdot \innerProd{A^* \mathbb{T}^*(\cdot - s) \widetilde{c}}{f(s)}  \intd{s} \right\|_{\lpSpaceDT{\infty}{\posRealNum}{Y}} \leq K \|\widetilde{c}\|_X \left\| f \right\|_{\lpSpaceDT{\infty}{\posRealNum}{X}}.
\end{equation}
But now for any $s \in [0,t]$ we have $y_s = \frac{A^* \mathbb{T}^*(t - s) \widetilde{c}}{\left\| A^* \mathbb{T}^*(t - s) \widetilde{c} \right\|} \in X$ with $\|y_s\|_X = 1$ such that $\innerProd{A^* \mathbb{T}^*(t - s) \widetilde{c}}{y_s} = \left\| A^* \mathbb{T}^*(t - s) \widetilde{c} \right\|$.
Consider then the function $f: [0,t] \rightarrow X$ given by $ s \mapsto f(s) = \frac{A^* \mathbb{T}^*(t - s) \widetilde{c}}{\left\| A^* \mathbb{T}^*(t - s) \widetilde{c} \right\|}$.
Clearly $f$ is continuous on $[0,t)$ and thus also measurable on this interval. Furthermore, we have $\| f \|_{\lpSpaceDT{\infty}{[0,t]}{X}} = 1$. Then by \eqref{eqn:cCounterexampleInequality} we have that $\int_0^t \left\| A^* \mathbb{T}^*(t - s) \widetilde{c} \right\| \intd{s} \leq K \|\widetilde{c}\|_X$,
that is for any $t > 0$ and any $\widetilde{c} \in X$ we have that
\begin{equation}
    \left\| A^* \mathbb{T}^*(\cdot) \widetilde{c} \right\|_{\lpSpaceDT{1}{[0,t]}{X}} \leq K \|\widetilde{c}\|_X,
\end{equation}
i.e.\ that $A^*$ is $L^1$-observation-admissible. But this implies that $A$ is $L^\infty$-control-admissible \cite{a_WeissObservationOperators1989} and thus by \cite[Thm.~2.9]{a_JacobSchwenningerWintermayr2022} $A$ is bounded, giving a contradiction. Therefore the assumption cannot hold.
\end{proof}

\begin{remark}
We note that Lemma~\ref{lem:ABIsystemBIBO} and Proposition~\ref{prop:counterExampleAIC} also show that the duality notions between admissible control and observation operators do not fully extend to BIBO stability.
Indeed, consider a non-$L^\infty$-BIBO stable system node $\Sigma(A,\mathbb{I},C, \mathbf{G})$ with $A$ the generator of an exponentially stable semigroup on a Hilbert space $X$ and an $L^1$-control-admissible $C:X_{1}\to \mathbb{C}$, guaranteed to exist by Proposition~\ref{prop:counterexampleAICL1}. Its dual system node  $\Sigma(A^*,C^*,\mathbb{I},\mathbf{G}^*)$ is $L^\infty$-BIBO stable as $C^*$ $L^\infty$-control-admissible, by \cite{a_WeissObservationOperators1989} and Corollary~\ref{lem:ABIsystemBIBO}.  As the dual system of $\Sigma(A^*,C^*,\mathbb{I},\mathbf{G}^*)$ is again (isomorphic to) $\Sigma(A,\mathbb{I},C, \mathbf{G})$, we conclude that BIBO stability is not preserved under duality.
\end{remark}

\subsection{Additive perturbations}
\begin{theorem}
\label{thm:addPertGeneral}
Let $\Sigma(A,B,C,\mathbf{G})$ be an $L^{\infty}$-BIBO stable system node  and let $P \in \boundedOpSelf{X}$. If both $\Sigma(A + P,B,\mathbb{I}, \resolvent{\cdot}{A + P} B)$ and $\Sigma(A, \mathbb{I}, C, C\resolvent{\cdot}{A})$ are $L^{\infty}$-BIBO stable, then any additively perturbed system node $\Sigma(A + P,B,C, \widetilde{\mathbf{G}})$ is $L^{\infty}$-BIBO stable.
\end{theorem}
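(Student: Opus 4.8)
The plan is to write the input-output map of the perturbed system as a parallel connection of the original system with the series connection of the two auxiliary systems, and then to obtain the BIBO inequality by composing the three given estimates. The algebraic core is the second resolvent identity
\[
\resolvent{s}{(A+P)} = \resolvent{s}{A} + \resolvent{s}{A}\, P \, \resolvent{s}{(A+P)}, \qquad s \in \rho(A) \cap \rho(A+P),
\]
which after multiplication by $C$ from the left and $B$ from the right becomes
\[
C\resolvent{s}{(A+P)}B = C\resolvent{s}{A}B + \bigl[ C\resolvent{s}{A} \bigr] \, P \, \bigl[ \resolvent{s}{(A+P)}B \bigr].
\]
Up to additive constants the three factors on the right are precisely the transfer functions of $\Sigma(A,B,C,\mathbf{G})$, of $\Sigma(A,\mathbb{I},C, C\resolvent{\cdot}{A})$ and of $\Sigma(A+P,B,\mathbb{I},\resolvent{\cdot}{(A+P)}B)$. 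Since changing $\widetilde{\mathbf{G}}$ by a constant $D \in \boundedOp{U}{Y}$ merely adds the feedthrough $u \mapsto Du$, which is bounded from $\lpSpaceDT{\infty}{[0,t]}{U}$ to $\lpSpaceDT{\infty}{[0,t]}{Y}$ uniformly in $t$, it suffices by Remark~\ref{rem:transferFunctionDifferConstant} to establish the claim for one convenient choice of $\widetilde{\mathbf{G}}$.

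First I would fix an input $u \in C_c^\infty((0,\infty),U)$, for which a classical solution of the perturbed system exists by Lemma~\ref{lem:existenceClassicalSolutions}, and verify the decomposition at the level of actual solutions. The perturbed state $x$ solves $\dot x = (A+P)x + Bu$ with $x(0)=0$, which is exactly the state equation of $\Sigma(A+P,B,\mathbb{I},\cdot)$; since the combined observation/feedthrough operator of that system returns the state, $x$ is its output. Putting $v := Px$, I would then show that the perturbed output satisfies $y = y_1 + y_3$, where $y_1$ is the output of $\Sigma(A,B,C,\mathbf{G})$ driven by $u$ and $y_3$ the output of $\Sigma(A,\mathbb{I},C, C\resolvent{\cdot}{A})$ driven by $v$. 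Taking Laplace transforms reduces this identity to the factorisation above, the constants matching by the chosen normalisation of $\widetilde{\mathbf{G}}$; injectivity of the Laplace transform then turns it into a pointwise identity of functions.

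Granting $y = y_1 + y_3$ and $v = Px$, the estimate is immediate. By the BIBO stability of the three systems, together with causality and truncation to keep all constants independent of $t$,
\[
\| y \|_{\lpSpaceDT{\infty}{[0,t]}{Y}} \leq \| y_1 \|_{\lpSpaceDT{\infty}{[0,t]}{Y}} + \| y_3 \|_{\lpSpaceDT{\infty}{[0,t]}{Y}} \lesssim \| u \|_{\lpSpaceDT{\infty}{[0,t]}{U}} + \| v \|_{\lpSpaceDT{\infty}{[0,t]}{X}},
\]
while $\| v \|_{\lpSpaceDT{\infty}{[0,t]}{X}} \leq \| P \| \, \| x \|_{\lpSpaceDT{\infty}{[0,t]}{X}} \lesssim \| P \| \, \| u \|_{\lpSpaceDT{\infty}{[0,t]}{U}}$ by the BIBO stability of $\Sigma(A+P,B,\mathbb{I},\cdot)$. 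This proves the BIBO inequality for smooth inputs; to extend it to all $u \in \lpSpacelocDT{\infty}{\posRealNum}{U}$ I would pass to generalised solutions and repeat the cut-off and smoothing procedure used in the proof of Proposition~\ref{prop:BIBOIneqForDistrSol}, existence of the relevant distributional solutions being guaranteed by Lemma~\ref{lem:existenceUniquenessGenSolutions}.

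The hard part will not be the estimate but the rigorous identification $y = y_1 + y_3$ at the level of genuine solutions rather than merely formally in the Laplace domain. Two points require care: one must check that feeding the perturbed state $x$ --- a priori only in $C(\posRealNum, X_{-1})$, but upgraded to $\lpSpacelocDT{\infty}{\posRealNum}{X}$ by the hypothesis on $\Sigma(A+P,B,\mathbb{I},\cdot)$ --- through the bounded operator $P$ yields an admissible $\lpSpacelocDT{\infty}{\posRealNum}{X}$ input for the third system; and one must track the additive constants separating the various transfer functions so that the series-parallel identity is exact and not just valid up to an unaccounted feedthrough term.
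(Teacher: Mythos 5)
Your decomposition is the same as the paper's: the perturbed system is the parallel connection of $\Sigma(A,B,C,\mathbf{G})$ with the series connection of $\Sigma(A+P,B,\mathbb{I},\cdot)$, $P$, and $\Sigma(A,\mathbb{I},C,\cdot)$, up to a static feedthrough coming from the second resolvent identity, and your bookkeeping of the additive constants in $\widetilde{\mathbf{G}}$ is correct (one may normalise $\widetilde{\mathbf{G}}(s)=\mathbf{G}(s)+C\resolvent{s}{A}P\resolvent{s}{(A+P)}B$, and any other choice only adds a bounded feedthrough). The estimate chain at the end is also exactly the paper's.

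The gap is in your verification and extension strategy. You establish $y=y_1+y_3$ only for classical solutions with $u\in C_c^\infty((0,\infty),U)$ via Laplace transforms, and then propose to reach all $u\in\lpSpacelocDT{\infty}{\posRealNum}{U}$ by ``repeating the cut-off and smoothing procedure of Proposition~\ref{prop:BIBOIneqForDistrSol}''. That procedure is not available here: it hinges on the convolution representation $y=h\ast u$ with $h\in\mathcal{M}(\posRealNum,\complNum^{d_Y\times d_U})$, which exists only because $U$ and $Y$ are finite-dimensional in Section~\ref{sec:BIBOStability}. Theorem~\ref{thm:addPertGeneral} makes no such assumption, and the two auxiliary systems necessarily have the (typically infinite-dimensional) state space $X$ as output resp.\ input space --- the paper explicitly warns at the start of Section~\ref{sec:AddPerturbationsChapter} that the results of Section~\ref{sec:BIBOStability} do not apply to them, and the equivalence of $C^\infty$- and $L^\infty$-BIBO stability is open in this generality. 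A secondary problem with the Laplace-domain verification itself is that the intermediate signal $v=Px$ is not compactly supported (and need not decay, since no stability of the semigroups is assumed), so Lemma~\ref{lem:TransferFunctionMultiRepCompact}\ref{enum:ComSuppLaplaceTrafo} does not apply to the third subsystem. Both issues disappear if you instead carry out the decomposition directly at the level of generalised distributional solutions, as the paper does: $\widetilde{x}=x_{A+P}-x_A$ satisfies $\dot{\widetilde{x}}=A\widetilde{x}+Px_{A+P}$ in $X_{-1}$, so $(Px_{A+P},\widetilde{x},C\widetilde{x})$ is a generalised solution of $\Sigma(A,\mathbb{I},C,C\resolvent{\cdot}{A})$, and the identity $y_{A+P}-y_A=\widetilde{y}-C\resolvent{\alpha}{A}P\resolvent{\alpha}{(A+P)}Bu+(\widetilde{\mathbf{G}}(\alpha)-\mathbf{G}(\alpha))u$ follows pointwise from the definition of the $C\&D$ operators and the second resolvent identity, with no Laplace transform and no density argument needed.
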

\begin{proof}
Let $u \in \lpSpacelocDT{\infty}{\posRealNum}{U}$. Then we have generalized solutions $(u, x_A, y_A)$ of  $\Sigma(A,B,C,\mathbf{G})$ with $x_A(0) = 0$ and $(u,x_{A+P}, y_{A+P})$ of $\Sigma(A + P,B,C, \widetilde{\mathbf{G}})$ with $x_{A+P}(0) = 0$. 

First, as $x_{A+P}$ solves $\dot{x}_{A+P} = (A+P) x_{A+P} + B u$ in $X_{-1}$ for almost all $t>0$ and $x_A$ solves $\dot{x}_A = A x_A + B u$ in $X_{-1}$ for almost all $t > 0$, we see that $\widetilde{x} = x_{A+P} - x_A$ solves $\dot{\widetilde{x}} = A \widetilde{x} + P x_{A+P}$ in $X_{-1}$ for almost all $t > 0$. Thus there exists a generalized solution $(P x_{A+P}, \widetilde{x}, \widetilde{y})$ of the system node $\Sigma(A, \mathbb{I}, C, C\resolvent{\cdot}{A})$ with 
\begin{equation}
    \widetilde{y} = C \left[ \widetilde{x} - \resolvent{\alpha}{A} P x_{A+P} \right] +  C\resolvent{\alpha}{A} P x_{A+P} = C \widetilde{x},
\end{equation}
to be understood in a distributional sense (meaning states and inputs are to be read as twice integrated and the resulting expression is twice differentiated).

As $(u,x_{A+P}, x_{A+P})$ is a generalised solution of the $L^\infty$-BIBO stable system node $\Sigma(A + P,B,\mathbb{I}, \resolvent{\cdot}{A + P} B)$, there exists $c_1 > 0$ such that for all $u \in \lpSpacelocDT{\infty}{\posRealNum}{U}$ and all $T>0$
\begin{equation}
    \| x_{A+P} \|_{\lpSpaceDT{\infty}{[0,T]}{X}} \leq c_1 \| u \|_{\lpSpaceDT{\infty}{[0,T]}{U}},
\end{equation}
whence $P x_{A+P} \in \lpSpacelocDT{\infty}{\posRealNum}{X}$. As $\Sigma(A, \mathbb{I}, C, C\resolvent{\cdot}{A})$ is $L^\infty$-BIBO stable we  conclude that $\widetilde{y} \in \lpSpacelocDT{\infty}{\posRealNum}{Y}$ and that there exists $c_2 > 0$ such that 
\begin{equation}
\begin{split}
    \| \widetilde{y} \|_{\lpSpaceDT{\infty}{[0,T]}{Y}} \leq c_2 \| P x_{A+P} \|_{\lpSpaceDT{\infty}{[0,T]}{X}} &\leq c_2 \| P \| \| x_{A+P} \|_{\lpSpaceDT{\infty}{[0,T]}{X}} \\ &\leq c_1 c_2 \| P \| \| u \|_{\lpSpaceDT{\infty}{[0,T]}{U}}.
\end{split}
\end{equation}
Further, consider for any $\alpha \in \complNum_{\omega(\mathbb{T})} \cap  \complNum_{\omega(\widetilde{\mathbb{T}})}$ (with $\widetilde{\mathbb{T}}$ being the semigroup generated by $A+P$) the expression from Definition~\ref{def:distributionalSolution} for the outputs $y_A$ and $y_{A+P}$ and let $\left(C\&D\right)_{A}$ and $\left(C\&D\right)_{A+P}$ be the combined output/feedthrough operators of the system nodes $\Sigma(A,B,C,\mathbf{G})$ and $\Sigma(A + P,B,C, \widetilde{\mathbf{G}})$ respectively. Then we have in a distributional sense (again meaning all $x$ and $u$ are to be understood as twice integrated and a distributional double derivative applied in the end, see also Definition~\ref{def:distributionalSolution}) 
\begin{equation}
\begin{split}
    &y_{A+P} - y_A = \left(C\&D\right)_{A+P} \begin{bmatrix} x_{A+P} \\ u \end{bmatrix} - \left(C\&D\right)_{A} \begin{bmatrix} x_{A} \\ u \end{bmatrix}    \\
    &= C \left[ \left( x_{A+P} - x_{A} \right) - \left( \resolvent{\alpha}{(A+P)} - \resolvent{\alpha}{A} \right) B u \right] + \left( \widetilde{\mathbf{G}}(\alpha) - \mathbf{G}(\alpha) \right) u \\
    &= \widetilde{y} - C \resolvent{\alpha}{A} P \resolvent{\alpha}{(A+P)} B u + \left( \widetilde{\mathbf{G}}(\alpha) - \mathbf{G}(\alpha) \right) u.
\end{split}
\end{equation}
Now, as $C \resolvent{\alpha}{A} P \resolvent{\alpha}{(A+P)} B \in \boundedOp{U}{Y}$ and $\widetilde{\mathbf{G}}(\alpha) - \mathbf{G}(\alpha) \in \boundedOp{U}{Y}$ this actually holds as an equality of functions and we have
\begin{equation}
    y_{A+P} = y_A + \widetilde{y}  - C \resolvent{\alpha}{A} P \resolvent{\alpha}{(A+P)} B u + \left( \widetilde{\mathbf{G}}(\alpha) - \mathbf{G}(\alpha) \right) u. 
\end{equation}
The statement then follows from the $L^\infty$-BIBO stability of $\Sigma(A,B,C,\mathbf{G})$ as
\begin{align}
    &\left\| C \resolvent{\alpha}{A} P \resolvent{\alpha}{(A+P)} B \, u(t) \right\|_Y \\  & \hspace{0.75cm}\leq \left\| C  \resolvent{\alpha}{A} \right\|_{\boundedOp{X}{Y}} \left\| P \right\|_{\boundedOpSelf{X}} \left\| \resolvent{\alpha}{(A+P)}  B \right\|_{\boundedOp{U}{X}} \left\| u(t) \right\|_U.
\end{align}
\end{proof}
\begin{remark}
The construction carried out in the proof of Theorem~\ref{thm:addPertGeneral} can be understood as decomposing the perturbed system node $\Sigma(A+P,B,C,\widetilde{\mathbf{G}})$ into the unperturbed system node $\Sigma(A,B,C,\mathbf{G})$, a pure gain part and a concatenation of $P$, $\Sigma(A, B, \mathbb{I}, \resolvent{\cdot}{(A+P)} B)$ and $\Sigma(A, \mathbb{I}, C, C\resolvent{\cdot}{A})$ as depicted in Figure~\ref{fig:perturbedSystemDecomposedl}.
\end{remark}
Using the two results of the previous section we then have the following corollary.
\begin{corollary}
\label{thm:addPertAnalSG}
Let $\Sigma(A,B,C,\mathbf{G})$ be an $L^{\infty}$-BIBO stable system node with $A$ being the generator of an exponentially stable, analytic semigroup $\semiGroupDef$, $B \in \mathcal{L}(U,X_{-1})$ an $L^\infty$-admissible control operator for $\mathbb{T}$ and $C\in\mathcal{L}(X_1 ,Y)$ an $L^p$-admissible observation operator for $\mathbb{T}$ for some $1 < p \leq \infty$. 
Let furthermore $P \in \boundedOpSelf{X}$ be such that $A + P: \dom(A) \rightarrow X$ generates an exponentially stable semigroup $\widetilde{\mathbb{T}}$.
Then any additively perturbed system node $\Sigma(A + P,B,C, \widetilde{\mathbf{G}})$
is $L^{\infty}$-BIBO stable.
\end{corollary}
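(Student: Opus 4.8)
The plan is to obtain the claim as a direct application of the general additive-perturbation result, Theorem~\ref{thm:addPertGeneral}. That theorem reduces the $L^{\infty}$-BIBO stability of $\Sigma(A+P,B,C,\widetilde{\mathbf{G}})$ to establishing separately the $L^{\infty}$-BIBO stability of the two auxiliary system nodes $\Sigma(A,\mathbb{I},C,C\resolvent{\cdot}{A})$ and $\Sigma(A+P,B,\mathbb{I},\resolvent{\cdot}{(A+P)}B)$. Thus the whole proof consists in verifying these two hypotheses with the tools of the preceding section, one concerning the observation data, the other the perturbed control data.

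The observation node $\Sigma(A,\mathbb{I},C,C\resolvent{\cdot}{A})$ involves only the \emph{unperturbed} generator $A$, so it needs no perturbation argument. Its control operator is $\mathbb{I}\in\boundedOpSelf{X}$ and hence bounded into $X$, $A$ generates an exponentially stable analytic semigroup, and $C$ is $L^{p}$-observation-admissible for $\mathbb{T}$; for $p\in(1,\infty)$ this is precisely the setting of Corollary~\ref{cor:AICsystemBIBO}. To cover the remaining case $p=\infty$ I would use that on a finite interval $\|f\|_{\lpSpaceDT{q}{[0,t]}{Y}}\le t^{1/q}\|f\|_{\lpSpaceDT{\infty}{[0,t]}{Y}}$ for every finite $q$, so finite-time $L^{\infty}$-observation-admissibility entails finite-time $L^{q}$-observation-admissibility for any $q\in(1,\infty)$; Corollary~\ref{cor:AICsystemBIBO} applied with such a $q$ then gives the $L^{\infty}$-BIBO stability of $\Sigma(A,\mathbb{I},C,C\resolvent{\cdot}{A})$ in all admitted cases.

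For the control node $\Sigma(A+P,B,\mathbb{I},\resolvent{\cdot}{(A+P)}B)$ the aim is to invoke the second part of Lemma~\ref{lem:ABIsystemBIBO}: as $A+P$ generates the exponentially stable semigroup $\widetilde{\mathbb{T}}$ by hypothesis, it suffices to show that $B$ is (finite-time) $L^{p}$-control-admissible for $\widetilde{\mathbb{T}}$ for some $p\in[1,\infty]$. We know this for the unperturbed semigroup $\mathbb{T}$ (with $p=\infty$), and since $P\in\boundedOpSelf{X}$ the extrapolation spaces of $A$ and $A+P$ coincide with equivalent norms, so $B\in\boundedOp{U}{X_{-1}}$ stays meaningful. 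The crux of the proof is the claim that control-admissibility survives the bounded generator perturbation $P$. I would prove it by the variation-of-constants principle: writing $\widetilde{\Phi}_{t}u:=\int_{0}^{t}\widetilde{\mathbb{T}}_{-1}(t-s)Bu(s)\intd{s}$ for the perturbed input map and feeding $P\widetilde{\Phi}_{\cdot}u$ as an additional $X$-valued input into the unperturbed system yields the Volterra identity
\begin{equation}
    \widetilde{\Phi}_{t}u=\Phi_{t}u+\int_{0}^{t}\semiGroup{t-s}\,P\,\widetilde{\Phi}_{s}u\intd{s},
\end{equation}
in which $\Phi_{t}u=\int_{0}^{t}\mathbb{T}_{-1}(t-s)Bu(s)\intd{s}$ is bounded into $X$ by the admissibility of $B$ for $\mathbb{T}$, and in which $P$ is only ever applied to $\widetilde{\Phi}_{s}u$ so that no extension of $P$ to $X_{-1}$ is required. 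Reading this as a fixed-point equation in $\lpSpaceDT{\infty}{[0,t]}{X}$ with source term $\Phi_{\cdot}u$ and contraction $g\mapsto\int_{0}^{\cdot}\semiGroup{\cdot-s}Pg(s)\intd{s}$, whose operator norm is small on short intervals because $(\semiGroup{r}P)_{r}$ is uniformly bounded on compact time intervals, and then patching finitely many such intervals by a Gronwall argument, shows that $\widetilde{\Phi}_{t}$ maps boundedly into $X$. Hence $B$ is $L^{\infty}$-control-admissible for $\widetilde{\mathbb{T}}$, and Lemma~\ref{lem:ABIsystemBIBO} yields the $L^{\infty}$-BIBO stability of $\Sigma(A+P,B,\mathbb{I},\resolvent{\cdot}{(A+P)}B)$.

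With both auxiliary nodes shown to be $L^{\infty}$-BIBO stable, Theorem~\ref{thm:addPertGeneral} applies and delivers the assertion. I expect the admissibility-perturbation step to be the only genuine obstacle: the fixed-point/Gronwall argument has to be run with care, because $\widetilde{\Phi}_{s}u$ a priori lives only in $X_{-1}$, so one must justify that the Volterra identity has its solution in $X$. This robustness of admissibility under bounded perturbations is otherwise classical and could alternatively be quoted from the literature, while all remaining steps are routine reductions to results already established.
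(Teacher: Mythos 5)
Your proof is correct and follows essentially the same route as the paper: both reduce the claim to Theorem~\ref{thm:addPertGeneral} by verifying its two hypotheses via Lemma~\ref{lem:ABIsystemBIBO} (for $\Sigma(A+P,B,\mathbb{I},\resolvent{\cdot}{(A+P)}B)$) and Corollary~\ref{cor:AICsystemBIBO} (for $\Sigma(A,\mathbb{I},C,C\resolvent{\cdot}{A})$). You additionally spell out two steps the paper's one-line proof leaves implicit --- the reduction of the case $p=\infty$ to some $p\in(1,\infty)$ by H\"older's inequality on finite intervals, and the preservation of the $L^\infty$-control-admissibility of $B$ under the bounded perturbation $P$ via the Volterra identity and a fixed-point/Gronwall argument --- and both of these are genuinely needed to invoke the cited results and are handled correctly.
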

\begin{proof}
$\Sigma(A + P,B,\mathbb{I}, \resolvent{\cdot}{A + P} B)$ is $L^\infty$-BIBO stable by Lemma~\ref{lem:ABIsystemBIBO} and $\Sigma(A, \mathbb{I}, C, C\resolvent{\cdot}{A})$ is $L^\infty$-BIBO stable by Corollary~\ref{cor:AICsystemBIBO} and thus the statement follows directly from Theorem~\ref{thm:addPertGeneral}.
\end{proof}

\begin{figure}
    \centering
    \resizebox{\textwidth}{!}{
    \begin{tikzpicture}[boxednode/.style={rectangle,thick,draw,inner xsep = 3mm,inner ysep=3mm},roundnode/.style={circle,thick,draw,inner xsep = 0.3mm,inner ysep=0.3mm}]
\node[boxednode]      (P)                              {$P$};
\node[boxednode]      (AIC)             [right= of P]                  {$\Sigma\left( A, \mathbb{I}, C, C \resolvent{\cdot\,}{A} \right)$};
\node[boxednode]      (ABI)             [left= of P]                  {$\Sigma\left( A + P, B, \mathbb{I}, \resolvent{\cdot\,}{(A+P)} B \right)$};
\node[boxednode]      (ABC)             [above=2cm of P]                  {$\Sigma\left( A, B,C,\mathbf{G} \right)$};
\node[boxednode] (MiddleNode) [above=0.5cm of P] {$\widetilde{\mathbf{G}}(\alpha) - \mathbf{G}(\alpha) - C \resolvent{\alpha}{A} P \resolvent{\alpha}{(A+P)} B$};
\node[roundnode]      (Plus)             [right=2cm of MiddleNode]                  {$+$};
\node      (InputSplit)          [circle,fill,inner sep=1pt]   [ left=3.8cm of MiddleNode]                  {};
\node (InputOriginRight)    [left=0.5cm of InputSplit] {};
\node (OutputRight)    [right=0.5cm of Plus] {};
\node (InputOriginRight)    [left=1cm of InputSplit] {};
\node (OutputRight)    [right=1cm of Plus] {};
\node (Equivalent)    [left=0.1cm of InputOriginRight] {$\equiv$};
\node (OutputLeft)    [left=0.1cm of Equivalent] {};
\node[boxednode]      (APBC)             [left=1cm of OutputLeft]                  {$\Sigma\left( A+P, B,C, \widetilde{\mathbf{G}} \right)$};
\node (InputOriginLeft)    [left=1cm of APBC] {};

\draw[-latex,thick] (ABI) -- (P);
\draw[-latex,thick] (P) -- (AIC);
\draw[-latex,thick] (AIC) -| (Plus.south);
\draw[-latex,thick] (ABC) -| (Plus.north);
\draw[-,thick] (ABI) -| (InputSplit.north);
\draw[-,thick] (ABC) -| (InputSplit.south);
\draw[-,thick] (InputOriginRight) -- (InputSplit.west);
\draw[-latex,thick] (Plus) -- (OutputRight);
\draw[-,thick] (InputOriginLeft) -- (APBC.west);
\draw[-latex,thick] (APBC.east) -- (OutputLeft);
\draw[-latex,thick] (MiddleNode) -- (Plus);
\draw[-,thick] (InputSplit) -- (MiddleNode);

\end{tikzpicture}
}
    \caption{Decomposition of the additively perturbed system}
    \label{fig:perturbedSystemDecomposedl}
\end{figure}
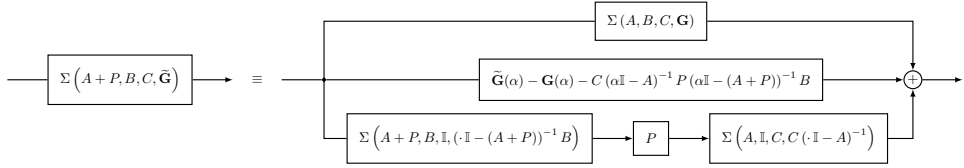

\section{Final remarks}
For an application of the results derived in this work to a concrete example, we refer the reader to \cite{a_HastirEtAl23NonlinearBIBO}, where a chemical reactor system has been studied in the context of Funnel control and BIBO stability of semilinear systems.

Within the study of BIBO stability for linear systems some open questions remain. First, the situation in the case of infinite-dimensional input or output spaces is, apart from a few special cases, unresolved. This includes the question of the relation between the two different BIBO notions and whether or not the inverse Laplace transform of the transfer function being an (operator-valued) measure of bounded total variation, is still is a necessary condition for BIBO stability.

Secondly, it would be desirable to extend perturbation results as in Section \ref{sec:AddPerturbationsChapter} to non-analytic semigroups as they appear e.g.\ in the study of hyperbolic PDEs.

\section*{Acknowledgments}
The authors would like to thank the anonymous reviewers for their very helpful comments.

\end{document}